\documentclass[11pt,a4paper]{article}

\usepackage[margin=2.5cm]{geometry}
\usepackage{amsmath,amssymb,amsthm}
\usepackage{mathtools}
\usepackage{bm}
\usepackage{mathrsfs}
\usepackage{enumitem}
\usepackage{cite}
\usepackage{xcolor}
\usepackage[hypertexnames=false]{hyperref}
\hypersetup{colorlinks=true,linkcolor=blue!50!black,
            citecolor=blue!50!black,urlcolor=blue!50!black,
            pdftitle={Shape Holomorphy and Sparse Approximation of the
              Maxwell Electric Field Integral Operator},
            pdfauthor={Paul Escapil-Inchausp\'e and Carlos Jerez-Hanckes}}
\usepackage[capitalize,nameinlink]{cleveref}

\theoremstyle{plain}
\newtheorem{theorem}{Theorem}[section]
\newtheorem{corollary}{Corollary}[section]
\newtheorem{lemma}{Lemma}[section]
\newtheorem{proposition}{Proposition}[section]
\theoremstyle{definition}
\newtheorem{definition}{Definition}[section]
\newtheorem{assumption}{Assumption}[section]

\newtheorem{remark}{Remark}[section]
\numberwithin{equation}{section}
\crefname{assumption}{Assumption}{Assumptions}
\Crefname{assumption}{Assumption}{Assumptions}

\makeatletter
\let\orig@appendix\appendix
\def\appprefix@section{Appendix~}
\renewcommand{\appendix}{\orig@appendix
  \renewcommand{\@seccntformat}[1]{%
    \csname appprefix@##1\endcsname\csname the##1\endcsname.\quad}}
\makeatother

\newcommand{\refcite}[1]{\cite{#1}}

\newcommand{\Hpar}[2]{\boldsymbol H^{#1}_{\parallel}(#2)}

\providecommand{\R}{}\renewcommand{\R}{\mathbb R}
\providecommand{\C}{}\renewcommand{\C}{\mathbb C}
\providecommand{\N}{}\renewcommand{\N}{\mathbb N}
\providecommand{\U}{}\renewcommand{\U}{\mathbb U}
\providecommand{\Id}{}\renewcommand{\Id}{\operatorname{Id}}
\providecommand{\curl}{}\renewcommand{\curl}{\operatorname{curl}}
\providecommand{\divg}{}\renewcommand{\divg}{\operatorname{div}_{\Gamma}}
\providecommand{\divh}{}\renewcommand{\divh}{\operatorname{div}_{\widehat\Gamma}}
\providecommand{\divy}{}\renewcommand{\divy}{\operatorname{div}_{\Gamma_{\by}}}
\providecommand{\curlg}{}\renewcommand{\curlg}{\operatorname{curl}_{\Gamma}}
\providecommand{\cL}{}\renewcommand{\cL}{\mathcal L}
\providecommand{\by}{}\renewcommand{\by}{\mathbf y}
\providecommand{\bz}{}\renewcommand{\bz}{\mathbf z}
\providecommand{\bw}{}\renewcommand{\bw}{\mathbf w}
\providecommand{\bb}{}\renewcommand{\bb}{\mathbf b}
\providecommand{\bbeta}{}\renewcommand{\bbeta}{\bm\beta}
\providecommand{\brho}{}\renewcommand{\brho}{\bm\rho}
\providecommand{\bnu}{}\renewcommand{\bnu}{\bm\nu}
\providecommand{\bmu}{}\renewcommand{\bmu}{\bm\mu}
\providecommand{\bdelta}{}\renewcommand{\bdelta}{\bm\delta}
\providecommand{\rr}{}\renewcommand{\rr}{\mathbf r}
\providecommand{\ps}{}\renewcommand{\ps}{\bm\psi}
\providecommand{\jj}{}\renewcommand{\jj}{\mathbf j}
\providecommand{\vv}{}\renewcommand{\vv}{\mathbf v}
\providecommand{\uu}{}\renewcommand{\uu}{\mathbf u}
\providecommand{\EE}{}\renewcommand{\EE}{\mathbf E}

\providecommand{\pp}{}\renewcommand{\pp}{\mathbf p}
\providecommand{\ddir}{}\renewcommand{\ddir}{\mathbf d}
\providecommand{\obs}{}\renewcommand{\obs}{\bm\theta}
\providecommand{\nn}{}\renewcommand{\nn}{\mathbf n}
\providecommand{\xhat}{}\renewcommand{\xhat}{\widehat x}
\providecommand{\yhat}{}\renewcommand{\yhat}{\widehat y}
\providecommand{\Ghat}{}\renewcommand{\Ghat}{\widehat\Gamma}
\providecommand{\Xhat}{}\renewcommand{\Xhat}{\widehat X}
\providecommand{\Xhath}{}\renewcommand{\Xhath}{\widehat X_h}
\providecommand{\eps}{}\renewcommand{\eps}{\varepsilon}
\providecommand{\ds}{}\renewcommand{\ds}{\,\mathrm ds}

\providecommand{\norm}[1]{}\renewcommand{\norm}[1]{\left\|#1\right\|}
\providecommand{\abs}[1]{}\renewcommand{\abs}[1]{\left|#1\right|}
\providecommand{\pair}[2]{}\renewcommand{\pair}[2]{\left\langle #1,#2\right\rangle}
\providecommand{\pairt}[2]{}\renewcommand{\pairt}[2]{\left\langle #1,#2\right\rangle_{\tau}}

\providecommand{\supp}{}\renewcommand{\supp}{\operatorname{supp}}
\providecommand{\Et}{}\renewcommand{\Et}{\mathsf E}
\providecommand{\Vop}{}\renewcommand{\Vop}{\mathsf V}
\providecommand{\Top}{}\renewcommand{\Top}{\mathsf T}
\providecommand{\Aop}{}\renewcommand{\Aop}{\mathsf A}

\providecommand{\Iop}{}\renewcommand{\Iop}{\mathsf I}
\providecommand{\Pop}{}\renewcommand{\Pop}{\mathcal P}
\providecommand{\Rop}{}\renewcommand{\Rop}{\mathcal R}
\providecommand{\Hdiv}[1]{}\renewcommand{\Hdiv}[1]{\boldsymbol H^{-1/2}(\mathrm{div}_{#1},#1)}
\providecommand{\Hdivgen}[0]{}\renewcommand{\Hdivgen}[0]{\boldsymbol H^{-1/2}(\mathrm{div})}
\providecommand{\Hcurl}[1]{}\renewcommand{\Hcurl}[1]{\boldsymbol H^{-1/2}(\mathrm{curl}_{#1},#1)}

\begin{document}

\title{Shape Holomorphy and Sparse Approximation of the Maxwell\\
Electric Field Integral Operator}

\author{%
Paul Escapil-Inchausp\'e\\[2pt]
{\small Facultad de Ingenier\'ia y Ciencias, Universidad Adolfo Ib\'a\~nez, Santiago, Chile}\\
{\small\href{mailto:paul.escapil@edu.uai.cl}{\texttt{paul.escapil@edu.uai.cl}}}
\and
Carlos Jerez-Hanckes (corresponding author)\\[2pt]
{\small Department of Mathematics and Digital Futures, KTH Royal Institute of Technology,}\\
{\small Lindstedtsv\"agen 25, Stockholm, Sweden}\\
{\small Inria Chile, Santiago, Chile}\\
{\small\href{mailto:carlosjh@kth.se}{\texttt{carlosjh@kth.se}}}}

\date{}

\maketitle

\begin{abstract}
Time-harmonic Maxwell scattering by a perfectly conducting obstacle is governed by the electric field integral equation (EFIE), whose natural energy space is $\boldsymbol H^{-1/2}(\mathrm{div}_\Gamma,\Gamma)$. For countably parametrized surface deformations, both the operator and its energy space depend on the geometry. We establish operator-valued shape holomorphy of the EFIE operator on a fixed reference space and derive dimension-independent sparse approximation rates. The main analytical difficulty is a fractional mapping property absent from existing $L^2$-based operator-valued shape-holomorphy theory for weakly singular kernels: the Maxwell graph norm requires uniform holomorphy of the complex-deformed scalar single layer as an operator from $H^{-1/2}$ to $H^{1/2}$. We prove this one-order smoothing by realizing the Laplace principal part as the trace of a uniformly sectorial complex-coefficient divergence-form problem on a fixed real ambient domain. Combined with a surface contravariant Piola transformation, which identifies the geometry-dependent Maxwell trace spaces and cancels the surface Jacobians in the pulled-back EFIE exactly, this yields $(\mathbf b,p,\varepsilon)$-holomorphy of the EFIE operator family for $\ell^p$-summable shape deformations, $0<p<1$. Under pointwise exclusion of interior electric resonances, parameter-uniform invertibility is derived rather than assumed, and the surface current and far field inherit the same parametric regularity. Their Legendre coefficients are $\ell^p$-summable, yielding dimension-independent best $N$-term approximation rates. The operator-level results also transfer to Galerkin boundary element operators on a single reference mesh, with constants independent of the discretization dimension, and provide reusable surrogates for multiple incident fields and bounded linear observables.

\medskip
\noindent\textbf{Keywords:} Maxwell equations; electric field integral equation; shape holomorphy; boundary integral operator; uncertainty quantification; parametric geometry; sparse polynomial approximation.

\smallskip
\noindent\textbf{AMS subject classification:} 35Q61, 41A25, 41A63, 45A05, 47G10, 65N38.
\end{abstract}

\section{Introduction}\label{sec:intro}

\subsection{Problem and main contribution}

Boundary integral equations are a natural formulation of time-harmonic
Maxwell scattering in homogeneous media. For a perfectly electrically
conducting obstacle, the electric field integral equation (EFIE)
enforces the vanishing of the tangential total electric field on the
boundary and determines the induced surface current~\cite{Nedelec,BuffaHiptmair2003}.
It is posed in the tangential trace space
\(X(\Gamma)=\Hdiv{\Gamma}\), which controls a tangential distribution
together with its surface divergence. When the boundary depends on many
parameters, both the operator and its energy space therefore vary with
the geometry.

We consider affine-parametric deformations of a fixed reference surface
\(\Ghat\),
\begin{equation}\label{eq:intro-param}
\rr_{\by}(\xhat)=\xhat+\sum_{j\ge1}y_j\ps_j(\xhat),
\qquad \by=(y_j)_{j\ge1}\in\U=[-1,1]^{\N},
\qquad \Gamma_{\by}=\rr_{\by}(\Ghat),
\end{equation}
with amplitudes
\(b_j=\norm{\ps_j}_{W^{2,\infty}(\Ghat;\R^3)}\). We assume
\(\bb=(b_j)_{j\ge1}\in\ell^p(\N)\) for some \(0<p<1\). Such
anisotropic summability, combined with complex-parametric holomorphy,
is the standard mechanism behind dimension-robust sparse polynomial approximation~\cite{CohenDeVoreSchwab2011,ChkifaCohenSchwab2015,CohenDeVore2015}.

The difficulty is not merely to prove holomorphic dependence of a
Maxwell solution. Existing shape-holomorphy results for Maxwell
scattering are formulated at the field or solution level~\cite{JerezSchwabZech2017,AylwinJerezSchwabZech2020}, whereas the
operator-valued theory for weakly singular boundary kernels acts in \(L^2\)~\cite{DolzHenriquez2024}. The EFIE requires an operator-level
statement in \(\Hdiv{\Gamma}\). After the geometry-dependent trace
spaces are transported to \(\Ghat\), this reduces to a fractional
mapping property not supplied by the \(L^2\) theory: uniform
holomorphy of the complex-deformed scalar single layer as
\[
\Vop_{\bz}:H^{-1/2}(\Ghat)\longrightarrow H^{1/2}(\Ghat).
\]

Two ingredients remove this obstruction. The surface contravariant Piola
transformation identifies all spaces \(\Hdiv{\Gamma_{\by}}\) with the
single reference space \(\Xhat:=\Hdiv{\Ghat}\) and, at the same time,
cancels the surface Jacobians exactly in the EFIE variational form. The
uniform \(H^{-1/2}\to H^{1/2}\) mapping theorem for the complex-deformed
scalar single-layer family is then obtained by realizing its Laplace
principal part as the trace of a complex-coefficient Newton problem on a
fixed \emph{real} ambient space. There a physical elliptic problem is
available, even though the deformed surface is not.

Such operator-level regularity is what many-query Maxwell scattering
requires. In shape uncertainty quantification, geometric optimization, inverse
obstacle problems and surrogate modelling, the same boundary operator is
discretized for thousands of geometries (cf.~Refs.~\refcite{AylwinJerezSchwabZech2020,EscapilJerez2024,HiptmairScarabosioSchillingsSchwab2018,JerezSchwab2017}). A parametric surrogate of the {\em operator}, rather than of a solution for a prescribed excitation, can therefore be reused across incident fields and bounded linear observables.

\medskip
\noindent\textbf{Main results.} \Cref{thm:scalar} establishes uniform
\(H^{-1/2}(\Ghat)\to H^{1/2}(\Ghat)\) boundedness and operator-norm
holomorphy of the complex-deformed scalar single layer on admissible
tubes, by the fixed-domain trace--Newton construction of
\cref{sec:newton-route}. \Cref{prop:cancel} shows that the surface
Piola pullback cancels every surface Jacobian in the EFIE variational
form exactly. \Cref{thm:operator} combines these into
\((\bb,p,\eps)\)-holomorphy of
\(\by\mapsto\widehat\Aop_{\by}\in\cL(\Xhat,\Xhat')\).
\Cref{lem:uniforminverse} \emph{derives} parameter-uniform
invertibility from pointwise nonresonance, rather than assuming a uniform inverse bound as in Refs.~\refcite{JerezSchwabZech2017} and~\refcite{DolzHenriquez2024}.
\Cref{thm:current,thm:far} transfer the regularity to the surface
current and the far field, \cref{thm:sparse} gives
dimension-independent best \(N\)-term Legendre rates, and
\cref{prop:matrix,lem:disc-uniform} carry them to fixed-reference
Galerkin operators.

\subsection{Relation to previous work}\label{sec:intro-relation}

Two earlier lines of work on electromagnetic shape uncertainty must be
distinguished from the present one, since in each case the object of
study is different. In Ref.~\refcite{JerezSchwabZech2017}, the authors proved holomorphic dependence of the
scattered \emph{field} on bi-Lipschitz shape transformations, using
fixed-domain Maxwell variational problems. In the affine-parametric
setting their assumptions include conditions of the form
\begin{equation}\label{eq:JSZ-assumption}
\norm{\textstyle\sum_{j\ge1}\abs{DT_j}}_{L^\infty}<1,
\qquad
\big(\norm{T_j}_{W^{1,\infty}}\big)_{j\ge1}\in\ell^p,
\quad 0<p<1 .
\end{equation}
Subsequent work treats lossy cavities, multilevel algorithms,
quadrature and curved-domain approximation~\cite{AylwinJerezSchwabZech2020,AylwinJerezSchwabZech2023,AylwinJerez2021,AylwinJerez2023}.
Related acoustic results include
Refs.~\refcite{HiptmairScarabosioSchillingsSchwab2018}
and~\refcite{HiptmairSchwabSpence2025}.
Those results establish sparse approximability of a parameter-to-solution
map, for one incident field and one observable at a time. Here the
object of approximation is the boundary \emph{operator} itself, in its
energy norm. It is reusable across incident fields and observables, and
is the natural object for a parametric analysis of boundary element
discretizations.

The second line is perturbative. In Refs.~\refcite{JerezSchwab2017},~\refcite{EscapilJerez2020} and~\refcite{EscapilJerez2024}, shape differentiation of the boundary integral operators produces first-order
tensorized systems, whose accuracy is limited by truncation of the
deformation amplitude and whose rates are governed by a single shape
derivative. The present analysis removes that truncation: the full
countably parametric operator family is complexified, all mixed
parametric derivatives are controlled by Cauchy estimates
(\cref{thm:cauchy}), and the resulting rates are independent of the
number of activated parameters. Quadratic observables such as the radar cross sections of Ref.~\refcite{EscapilJerez2024} are not holomorphic under this
complexification, but remain accessible through \cref{rem:rcs}.
Classical shape differentiability of electromagnetic boundary integral
operators was studied by Costabel and Le Lou\"er~\cite{CostabelLeLouerI,CostabelLeLouerII}; their smooth-boundary
pseudo-homogeneous calculus is not used here.

Finally, the closest reference to our present contribution is Ref.~\refcite{DolzHenriquez2024} for weakly singular scalar kernels
on affinely parametrized Lipschitz boundaries. Therein operators act in
\(L^2\), and kernel estimates do not provide the fractional
smoothing that the Maxwell trace norm requires. The
obstruction is qualitative rather than quantitative. The estimate
available on an admissible set, \cref{lem:kernel}, is pointwise: it
dominates the deformed kernel by \(\abs{\xhat-\yhat}^{-1}\) and thereby
bounds \(\Vop_{\bz}\) between \(L^2\)-based endpoints, but it registers
no Sobolev gain at either of them. Interpolation redistributes the
smoothing present at the endpoints; it cannot create smoothing that is
absent from them. What the graph norm of \(\Hdiv{\Gamma}\) demands is a
gain of one full order, namely
\(\Vop_{\bz}:H^{-1/2}(\Ghat)\to H^{1/2}(\Ghat)\). On a real surface that
order is supplied by the elliptic problem the single layer solves; at a
complex parameter \(\rr_{\bz}(\Ghat)\) is not a surface, so that route
is unavailable and the gain has to be produced by other means. Supplying
it is the analytical content of \cref{thm:scalar}. Related operator-valued shape-holomorphy results have been
obtained for the Laplace Calder\'on projector in two dimensions~\cite{HenriquezSchwab2021} and for boundary integral operators on multiple open arcs~\cite{PintoHenriquezJerez2024}; none of those
settings faces the geometry-dependent Maxwell energy space treated
here. Our deformation amplitudes are measured in \(W^{2,\infty}\)
rather than in the weaker norms of Refs.~\refcite{DolzHenriquez2024} and~\refcite{JerezSchwabZech2017}. This is the price of the
fractional gain: the matrix weights \(F_{\bz}\) must act as uniformly
bounded multipliers on \(H^s\) for \(\abs s\le1\), and the ambient
realization of \cref{sec:newton-route} is quantitative. The \(C^{1,1}\)
hypothesis enters only there and in the componentwise identifications
of \cref{lem:tangential-identification,lem:piola}, not in the
well-posedness of the EFIE itself; see \cref{sec:open}.

\subsection{Proof strategy and organization}

The surface Piola map \(\Pop_{\by}:X(\Ghat)\to X(\Gamma_{\by})\)
satisfies
\begin{equation}\label{eq:intro-piola}
(\Pop_{\by}\widehat\uu)\circ\rr_{\by}=J_{\by}^{-1}F_{\by}\widehat\uu,
\qquad
(\divy\Pop_{\by}\widehat\uu)\circ\rr_{\by}
=J_{\by}^{-1}\divh\widehat\uu,
\end{equation}
where \(F_{\by}=D_{\Ghat}\rr_{\by}\) and \(J_{\by}\) is the surface
Jacobian. In the EFIE bilinear form the factors \(J_{\by}^{-1}\)
cancel exactly against the transformed surface measures. Consequently,
the pulled-back form depends on the geometry only through the deformed
Helmholtz kernel and the tangential differential \(F_{\by}\); no normal
field, inverse surface metric or explicit surface Jacobian remains.

This cancellation isolates the analytical issue. Schematically,
\begin{equation}\label{eq:schematic}
\widehat a_{\bz}(\widehat\jj,\widehat\vv)
\ \sim\
\pair{\Vop_{\bz}(F_{\bz}\widehat\jj)}{F_{\bz}\widehat\vv}
-\kappa^{-2}
\pair{\Vop_{\bz}\divh\widehat\jj}{\divh\widehat\vv}.
\end{equation}
Both components of the \(\Hdivgen\) graph norm have
\(H^{-1/2}\) regularity, so boundedness requires
\(\Vop_{\bz}:H^{-1/2}\to H^{1/2}\). For real geometries this is the classical single-layer mapping property~\cite{Costabel1988}. For
complex deformations there is no physical elliptic problem on the
complex surface. We instead realize the Laplace principal part through
a uniformly sectorial divergence-form problem on a fixed real ambient
space and recover the prescribed complex boundary kernel by analytic
continuation. The mechanism is that the surface smoothing \emph{is} the
volume energy isomorphism \(\dot H^{-1}(\R^3)\to\dot H^{1}(\R^3)\)
followed by the trace. After the Piola transformation,
\(\divh\widehat\jj\in H^{-1/2}(\Ghat)\) enters as a fixed
distributional source, and the energy isomorphism and the trace return
precisely the \(H^{1/2}(\Ghat)\) regularity needed to pair against
\(\divh\widehat\vv\in H^{-1/2}(\Ghat)\) in \eqref{eq:schematic}. The
\(C^{1,1}\) hypothesis is used in this ambient realization; possible
lower-regularity extensions are discussed in \cref{sec:open}.

\Cref{sec:geometry,sec:maxwell} set up the real and complex geometry,
the admissible parameter domains, the Maxwell trace spaces, the surface
Piola map and the EFIE, and \cref{sec:pullback} proves the exact
pullback formula. The fractional single-layer theorem is established in
\cref{sec:scalarSL} and used in \cref{sec:operator} to prove operator
holomorphy and, under nonresonance, holomorphy of the current.
\Cref{sec:consequences,sec:galerkin} derive the sparse-approximation,
far-field and fixed-reference Galerkin consequences, and
\cref{sec:open} discusses extensions.

\section{Geometry and complex-parametric admissibility}
\label{sec:geometry}

\subsection{Reference surface}

Let $\widehat D\subset\R^3$ be a bounded Lipschitz domain with connected
boundary $\Ghat:=\partial\widehat D$ and outward unit normal $\nn$.

\begin{assumption}[Reference geometry]\label{ass:reference}
The surface $\Ghat$ is compact, orientable, and of class $C^{1,1}$. It
admits a finite atlas
$\chi_\ell:\omega_\ell\subset\R^2\to U_\ell\subset\Ghat$,
$\ell=1,\dots,L$, whose $C^{1,1}$ constants and the constants in the
associated bi-Lipschitz equivalences are uniformly bounded, and a
subordinate partition of unity $\{\eta_\ell\}$ with
$\eta_\ell\in C^{0,1}_c(U_\ell)$.
\end{assumption}

Since $\Ghat$ is compact and of class $C^{1,1}$, the geodesic distance
$d_{\Ghat}$ and the ambient Euclidean distance are uniformly equivalent
on $\Ghat$,
\begin{equation}\label{eq:distcomp}
\abs{\xhat-\yhat}
\le
d_{\Ghat}(\xhat,\yhat)
\le
c_{\Ghat}\abs{\xhat-\yhat},
\qquad \xhat,\yhat\in\Ghat .
\end{equation}
We use this equivalence throughout. The $C^{1,1}$ hypothesis enters explicitly in \cref{sec:scalarSL} and
in the componentwise Sobolev-space arguments of
\cref{lem:tangential-identification} and \cref{lem:piola}. The latter
two uses are not intrinsic; see \cref{sec:open}.

\subsection{Affine-parametric surface maps}

Let $\ps_j\in W^{2,\infty}(\Ghat;\R^3)$, $j\ge1$, and set
\begin{equation}\label{eq:bj}
b_j:=\norm{\ps_j}_{W^{2,\infty}(\Ghat;\R^3)} .
\end{equation}
A coordinate with $b_j=0$ has $\ps_j\equiv0$ and does not act on the
geometry; discarding such coordinates, we assume throughout that
$b_j>0$ for every $j$, so that the divisions by $b_j$ appearing in the
choice of Cauchy radii below are legitimate.
With $\U=[-1,1]^\N$ define
\begin{equation}\label{eq:ry}
\rr_{\by}=\Id+\sum_{j\ge1}y_j\ps_j,
\qquad
\Gamma_{\by}:=\rr_{\by}(\Ghat).
\end{equation}
Since $\rr_{\by}$ is injective and $\Gamma_{\by}$ is a closed embedded
surface, we write $D_{\by}$ for the bounded domain enclosed by
$\Gamma_{\by}$; no extension of $\rr_{\by}$ off $\Ghat$ is needed, and
the resonance condition of \cref{ass:invert} is meaningful without
introducing a volume deformation.
Throughout, $\U$ carries the product topology, with respect to which it
is compact by Tychonoff's theorem. We follow the affine-parametric framework of Refs.~\refcite{JerezSchwabZech2017},~\refcite{AylwinJerezSchwabZech2020} and~\refcite{DolzHenriquez2024}.

\begin{assumption}[Parametric geometry]\label{ass:param}
There exists $0<p<1$ with $\bb=(b_j)_{j\ge1}\in\ell^p(\N)$. Moreover
the family $\{\rr_{\by}:\by\in\U\}$ consists of injective maps and is
uniformly bi-Lipschitz and uniformly $C^{1,1}$: there are constants
$0<c_{\rm lip}\le C_{\rm lip}$ and $C_{1,1}>0$ such that
\begin{equation}\label{eq:bilip}
c_{\rm lip}\abs{\xhat-\yhat}
\le
\abs{\rr_{\by}(\xhat)-\rr_{\by}(\yhat)}
\le
C_{\rm lip}\abs{\xhat-\yhat}
\qquad
\forall\,\xhat,\yhat\in\Ghat,\ \forall\,\by\in\U,
\end{equation}
and
$\sup_{\by\in\U}\norm{\rr_{\by}}_{C^{1,1}(\Ghat)}\le C_{1,1}$.
\end{assumption}

Since $p<1$ we have $\ell^p(\N)\subset\ell^1(\N)$, so \eqref{eq:ry}
converges absolutely in $W^{2,\infty}(\Ghat;\R^3)$ and $\rr_{\by}$ is
well defined for every $\by\in\U$. The conditions in \cref{ass:param} are not all
independent: $\bb\in\ell^p(\N)\subset\ell^1(\N)$ already gives uniform
$W^{2,\infty}$ bounds, while \eqref{eq:bilip} adds global injectivity
and a uniform lower bi-Lipschitz bound. Conditions of the type
\begin{equation}\label{eq:sufficient-small}
\norm{\textstyle\sum_{j\ge1}\abs{D_{\Ghat}\ps_j}}_{L^\infty(\Ghat)}<1 ,
\end{equation}
as in Eq.~(5.2) of Ref.~\refcite{JerezSchwabZech2017} and \eqref{eq:JSZ-assumption},
make $\Id+D_{\Ghat}\sum_jy_j\ps_j$ uniformly nonsingular and, together
with a quantitative global embedding hypothesis, are standard sufficient
conditions for \eqref{eq:bilip}. Derivative smallness alone yields only
a uniform immersion and hence local invertibility; the quantitative
lower bi-Lipschitz constant in \eqref{eq:bilip} is a separate
co-Lipschitz requirement. We work directly with the intrinsic
hypothesis \eqref{eq:bilip}, which is all that is used below.

Let $F_{\by}(\xhat):=D_{\Ghat}\rr_{\by}(\xhat)$ be the tangential
differential. For complex parameters there is no physical tangent plane
at $\rr_{\bz}(\xhat)$, so we regard $F_{\bz}(\xhat)$ throughout as a
bundle map $T_{\xhat}\Ghat\to\C^3$, whose range for real $\bz=\by$ is
$T_{\rr_{\by}(\xhat)}\Gamma_{\by}$. Norms such as
$\norm{F_{\bz}}_{W^{1,\infty}(\Ghat;\C^{3\times2})}$ refer to the
matrix representation in the fixed finite atlas of
\cref{ass:reference}, uniformly over its charts. Let the surface
Jacobian $J_{\by}$ be defined by
$\ds_{\rr_{\by}(\xhat)}=J_{\by}(\xhat)\ds_{\xhat}$.
\Cref{ass:param} gives constants $0<c_J\le C_J$ with
\begin{equation}\label{eq:Jreal}
c_J\le J_{\by}(\xhat)\le C_J
\qquad\text{for a.e. }\xhat\in\Ghat,\ \text{uniformly in }\by\in\U .
\end{equation}

\Cref{ass:param} is satisfiable. Let $\{\ps_j\}_{j\ge1}\subset
W^{2,\infty}(\Ghat;\R^3)$ be any family whose amplitudes satisfy
$\bb\in\ell^p(\N)$ for some $0<p<1$; replacing each $\ps_j$ by
$\theta\ps_j$ scales $\bb$ by $\theta>0$ and preserves that summability.
Since $\abs{\ps_j(\xhat)-\ps_j(\yhat)}\le c_{\Ghat}b_j\abs{\xhat-\yhat}$
by \eqref{eq:distcomp}, choosing $\theta$ so small that
$c_{\Ghat}\sum_{j\ge1}b_j<1$ makes every $\rr_{\by}$, $\by\in\U$,
injective and bi-Lipschitz with constants independent of $\by$, while
the uniform $C^{1,1}$ bound follows from $\bb\in\ell^1(\N)$.

\subsection{Admissible complex domains}

We distinguish two families of complex parameter domains. For $\rho>1$ set
\begin{equation}\label{eq:tube}
\mathcal O_\rho
:=
\{z\in\C:\operatorname{dist}(z,[-1,1])<\rho-1\},
\qquad
\mathcal O_{\brho}:=\prod_{j\ge1}\mathcal O_{\rho_j},
\end{equation}
and let $\mathcal E_\rho\subset\C$ denote the Bernstein ellipse with
foci $\{-1,1\}$ and semiaxis sum $\rho$, with
$\mathcal E_{\brho}:=\prod_{j\ge1}\mathcal E_{\rho_j}$. The tubes
$\mathcal O_{\brho}$ are the domains on which all geometric extensions
below are constructed. The polyellipses $\mathcal E_{\brho}$ are the
sets on which uniform bounds are recorded, being the domains required
by the polynomial approximation results of Refs.~\refcite{CohenDeVoreSchwab2011} and~\refcite{ChkifaCohenSchwab2015}. The two are
compatible:
\begin{equation}\label{eq:EinO}
\mathcal E_\rho\subset\mathcal O_\rho
\qquad\text{for every }\rho>1 .
\end{equation}
Indeed, every point of $\mathcal E_\rho$ lies within
$\tfrac12(\rho-\rho^{-1})=(\rho-1)\tfrac{\rho+1}{2\rho}<\rho-1$ of
$[-1,1]$; the elementary verification is in \cref{app:ellipse}.

\begin{definition}[Admissible radii]\label{def:admissible}
For $\eps>0$, a sequence $\brho$ with $\rho_j>1$ is
$(\bb,\eps)$-admissible if
\begin{equation}\label{eq:admbudget}
\sum_{j\ge1}(\rho_j-1)b_j\le\eps .
\end{equation}
\end{definition}

This is the admissibility budget used in Refs.~\refcite{JerezSchwabZech2017} and~\refcite{DolzHenriquez2024}. For
$\bz\in\mathcal O_{\brho}$ we set
\begin{equation}\label{eq:rz}
\rr_{\bz}=\Id+\sum_{j\ge1}z_j\ps_j ,
\end{equation}
a $\C^3$-valued map which is \emph{not} interpreted as a physical
surface. For the complexified boundary geometry we use the
complex-bilinear Euclidean product $a\cdot b=\sum_{m=1}^3a_mb_m$ for
$a,b\in\C^3$: no conjugation is applied in the boundary kernels or the
boundary dualities. The auxiliary volume realization of
\cref{sec:newton-route} uses the sesquilinear energy pairing instead,
for coercivity; see \cref{rem:sesqui}.

\begin{lemma}[Complex separation]\label{lem:complex-separation}
There exist $\eps_0>0$ and $c_0>0$, depending only on the uniform
geometric constants of \cref{ass:reference,ass:param}, such that for every $(\bb,\eps_0)$-admissible
$\brho$, every $\bz\in\mathcal O_{\brho}$, and all $\xhat\ne\yhat$ in
$\Ghat$,
\begin{equation}\label{eq:qpositive}
\Re\Big(
\big[\rr_{\bz}(\xhat)-\rr_{\bz}(\yhat)\big]
\cdot
\big[\rr_{\bz}(\xhat)-\rr_{\bz}(\yhat)\big]
\Big)
\ge
c_0\abs{\xhat-\yhat}^2 .
\end{equation}
Moreover
\begin{equation}\label{eq:Fcomplex}
\sup_{\bz\in\mathcal O_{\brho}}
\norm{F_{\bz}}_{W^{1,\infty}(\Ghat;\C^{3\times2})}
\le C_F
\end{equation}
uniformly over $(\bb,\eps_0)$-admissible $\brho$, and there exists
$\by\in\U$, depending on $\bz$, with
\begin{equation}\label{eq:Fclose}
\norm{F_{\bz}-F_{\by}}_{W^{1,\infty}(\Ghat)}\le \eps_0 .
\end{equation}
\end{lemma}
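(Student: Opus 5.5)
The plan is to decompose each complex parameter into a real part in $\U$ and a small perturbation, and then control the perturbation with the admissibility budget \eqref{eq:admbudget}. Fix $\bz\in\mathcal O_{\brho}$. For each $j$ we have $\operatorname{dist}(z_j,[-1,1])<\rho_j-1$, so we choose $y_j\in[-1,1]$ nearest to $z_j$ and write $z_j=y_j+w_j$ with $\abs{w_j}<\rho_j-1$. This gives $\by\in\U$, and
\[
\rr_{\bz}=\rr_{\by}+\sum_{j\ge1}w_j\ps_j=:\rr_{\by}+\bdelta,
\qquad
\norm{\bdelta}_{W^{2,\infty}(\Ghat;\C^3)}\le\sum_{j\ge1}\abs{w_j}b_j\le\eps
\]
whenever $\brho$ is $(\bb,\eps)$-admissible. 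The series converges absolutely in $W^{2,\infty}$ because $\sum_j(\rho_j-1)b_j<\infty$.

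The bounds on $F_{\bz}$ follow directly. Since $F_{\bz}-F_{\by}=D_{\Ghat}\bdelta$, the $W^{1,\infty}$ norm in the fixed atlas is controlled by $C_{\rm atlas}\norm{\bdelta}_{W^{2,\infty}}\le C_{\rm atlas}\eps$. Choosing $\eps_0$ so that $C_{\rm atlas}\eps_0\le\eps_0$ after normalization, or more simply absorbing the atlas constant into the definition of $b_j$ as the $W^{2,\infty}$ norm measured in the same atlas, yields \eqref{eq:Fclose}. Then \eqref{eq:Fcomplex} follows from the triangle inequality together with the uniform $C^{1,1}$ bound $\sup_{\by}\norm{\rr_{\by}}_{C^{1,1}}\le C_{1,1}$ of \cref{ass:param}, which gives $C_F=C_{1,1}+\eps_0$ up to atlas constants.

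For the separation estimate \eqref{eq:qpositive}, set $a=\rr_{\by}(\xhat)-\rr_{\by}(\yhat)\in\R^3$ and $d=\bdelta(\xhat)-\bdelta(\yhat)\in\C^3$. The bilinear square expands as
\[
\Re\big((a+d)\cdot(a+d)\big)=\abs{a}^2+2a\cdot\Re d+\Re(d\cdot d)\ge\abs a^2-2\abs a\abs d-\abs d^2 .
\]
By \eqref{eq:bilip}, $\abs a\ge c_{\rm lip}\abs{\xhat-\yhat}$ and $\abs a\le C_{\rm lip}\abs{\xhat-\yhat}$. By the Lipschitz estimate along geodesics and \eqref{eq:distcomp}, $\abs d\le c_{\Ghat}\norm{\bdelta}_{W^{1,\infty}}\abs{\xhat-\yhat}\le c_{\Ghat}\eps_0\abs{\xhat-\yhat}$. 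Hence the real part is at least $\big(c_{\rm lip}^2-2C_{\rm lip}c_{\Ghat}\eps_0-c_{\Ghat}^2\eps_0^2\big)\abs{\xhat-\yhat}^2$. Choosing $\eps_0$ small enough that this bracket is at least $c_0:=c_{\rm lip}^2/2$ completes the argument, and both constants depend only on $c_{\rm lip}$, $C_{\rm lip}$, $c_{\Ghat}$ and the atlas.

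The main, though modest, obstacle is bookkeeping. The perturbation is measured in $W^{2,\infty}(\Ghat)$, which is intrinsically defined, whereas $F_{\bz}$ is measured in atlas coordinates, so the chart-comparison constants of \cref{ass:reference} must be tracked when obtaining \eqref{eq:Fclose} with exactly the constant $\eps_0$. Reducing $\eps_0$ by a fixed atlas factor handles this: since \eqref{eq:Fclose} only requires the bound $\eps_0$ and $\eps_0$ is ours to choose, we take the admissibility level to be $\eps_0/C_{\rm atlas}$, or equivalently rename constants. A second point to check is that the Lipschitz bound for the complex-valued $\bdelta$ uses the geodesic integration of $D_{\Ghat}\bdelta$, which is valid since $\Ghat$ is $C^{1,1}$ and connected.
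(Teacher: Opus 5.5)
Your proposal is correct and follows the paper's proof essentially step for step. Both arguments use the same ingredients: the decomposition $\bz=\by+\bw$ with $\abs{w_j}<\rho_j-1$, the budget bound $\sum_j\abs{w_j}b_j\le\eps_0$, the estimate $\Re[(a+\bdelta)\cdot(a+\bdelta)]\ge\abs{a}^2-2\abs{a}\abs{\bdelta}-\abs{\bdelta}^2$ with $c_0=c_{\rm lip}^2/2$, and $F_{\bz}-F_{\by}=\sum_jw_jD_{\Ghat}\ps_j$ together with $\norm{F_{\by}}_{W^{1,\infty}}\le C_{1,1}$.

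On the atlas bookkeeping, the paper simply takes $\norm{D_{\Ghat}\ps_j}_{W^{1,\infty}}\le b_j$ as part of the norm convention, which is the route you also mention. Your other suggestion does not work: shrinking $\eps_0$ cannot make $C_{\rm atlas}\eps_0\le\eps_0$ when $C_{\rm atlas}>1$, so that step should be dropped.
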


\begin{proof}
Let $\bz\in\mathcal O_{\brho}$. By the definition \eqref{eq:tube} of
the tube we may choose $\by\in\U$ coordinatewise so that
$\abs{z_j-y_j}<\rho_j-1$ for every $j$. Writing $\bw:=\bz-\by$, the
admissibility condition \eqref{eq:admbudget} gives
\begin{equation}\label{eq:wbudget}
\sum_{j\ge1}\abs{w_j}b_j
\le
\sum_{j\ge1}(\rho_j-1)b_j
\le\eps_0 .
\end{equation}
This is the only step in which the geometry of the complex parameter
domain is used; it explains the use of $\mathcal O_{\brho}$, rather
than $\mathcal E_{\brho}$, as the extension domain.
Put
\[
a:=\rr_{\by}(\xhat)-\rr_{\by}(\yhat)\in\R^3,
\qquad
\bdelta:=\sum_{j\ge1}w_j\big[\ps_j(\xhat)-\ps_j(\yhat)\big]\in\C^3 .
\]
By \eqref{eq:bilip}, $\abs{a}\ge c_{\rm lip}\abs{\xhat-\yhat}$, while
$\abs{\ps_j(\xhat)-\ps_j(\yhat)}\le b_j\,d_{\Ghat}(\xhat,\yhat)
\le c_{\Ghat}b_j\abs{\xhat-\yhat}$ by \eqref{eq:distcomp}, so that
\eqref{eq:wbudget} gives
$\abs{\bdelta}\le c_{\Ghat}\eps_0\abs{\xhat-\yhat}$. Since $a$ is real,
\[
\Re\big[(a+\bdelta)\cdot(a+\bdelta)\big]
\ge
\abs{a}^2-2\abs{a}\abs{\bdelta}-\abs{\bdelta}^2
\ge
\Big(c_{\rm lip}^2
-2c_{\Ghat}C_{\rm lip}\eps_0
-c_{\Ghat}^2\eps_0^2\Big)\abs{\xhat-\yhat}^2 ,
\]
and \eqref{eq:qpositive} follows with $c_0=c_{\rm lip}^2/2$ once
$\eps_0$ is small enough. For \eqref{eq:Fcomplex} and
\eqref{eq:Fclose}, note that
$F_{\bz}-F_{\by}=\sum_{j}w_jD_{\Ghat}\ps_j$, so by \eqref{eq:wbudget}
$\norm{F_{\bz}-F_{\by}}_{W^{1,\infty}}\le\sum_j\abs{w_j}b_j\le\eps_0$,
while $\norm{F_{\by}}_{W^{1,\infty}}\le C_{1,1}$ uniformly in
$\by\in\U$.
\end{proof}

Define
\begin{equation}\label{eq:qz}
q_{\bz}(\xhat,\yhat)
:=
\big[\rr_{\bz}(\xhat)-\rr_{\bz}(\yhat)\big]
\cdot
\big[\rr_{\bz}(\xhat)-\rr_{\bz}(\yhat)\big] .
\end{equation}
By \cref{lem:complex-separation}, $q_{\bz}(\xhat,\yhat)$ lies in the
open right half-plane whenever $\xhat\ne\yhat$. We may therefore use
the principal branch of the square root and set
\begin{equation}\label{eq:Gz}
G_{\kappa,\bz}(\xhat,\yhat)
:=
\frac{\exp\big(\mathrm i\kappa\sqrt{q_{\bz}(\xhat,\yhat)}\,\big)}
{4\pi\sqrt{q_{\bz}(\xhat,\yhat)}} ,
\end{equation}
which for real $\bz=\by\in\U$ reduces to
$G_\kappa(\rr_{\by}(\xhat),\rr_{\by}(\yhat))$.

The following bound is Lemma~2.11 of Ref.~\refcite{DolzHenriquez2024};
we include a proof for completeness.

\begin{lemma}[Weak singularity and coordinate holomorphy]
\label{lem:kernel}
Let $\brho$ be $(\bb,\eps_0)$-admissible. Then
\begin{equation}\label{eq:kernelbound}
\abs{G_{\kappa,\bz}(\xhat,\yhat)}
\le
\frac{C_\kappa}{\abs{\xhat-\yhat}},
\qquad
\xhat\ne\yhat,
\end{equation}
uniformly in $\bz\in\mathcal O_{\brho}$, with $C_\kappa$ depending only
on $\kappa$, $c_0$, $C_{\rm lip}$, $\eps_0$ and $\operatorname{diam}
\Ghat$. For fixed $\xhat\ne\yhat$ the map
$\bz\mapsto G_{\kappa,\bz}(\xhat,\yhat)$ is holomorphic in every
coordinate and jointly holomorphic on finite-dimensional sections.
\end{lemma}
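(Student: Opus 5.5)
The plan is to bound the modulus of the complex square root from below by a multiple of $\abs{\xhat-\yhat}$, to bound the imaginary part of that root from above so that the exponential stays bounded, and to obtain holomorphy by composing holomorphic maps.

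\textbf{Step 1: size of $q_{\bz}$.} By \cref{lem:complex-separation},
\[
\Re q_{\bz}(\xhat,\yhat)\ge c_0\abs{\xhat-\yhat}^2 .
\]
Since $\abs{\sqrt{q}}=\abs{q}^{1/2}\ge(\Re q)^{1/2}$, this gives
\[
\abs{\sqrt{q_{\bz}}}\ge c_0^{1/2}\abs{\xhat-\yhat} .
\]
For the upper bound, write $\rr_{\bz}(\xhat)-\rr_{\bz}(\yhat)=a+\bdelta$ as in the proof of \cref{lem:complex-separation}. There $\abs{a}\le C_{\rm lip}\abs{\xhat-\yhat}$ and $\abs{\bdelta}\le c_{\Ghat}\eps_0\abs{\xhat-\yhat}$, so
\[
\abs{q_{\bz}}\le\big(C_{\rm lip}+c_{\Ghat}\eps_0\big)^2\abs{\xhat-\yhat}^2 .
\]

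\textbf{Step 2: the exponential.} We have
\[
\abs{\exp(\mathrm i\kappa\sqrt{q})}=\exp(-\kappa\,\Im\sqrt q)\le\exp(\kappa\abs{\sqrt q}) .
\]
With Step 1 and $\abs{\xhat-\yhat}\le\operatorname{diam}\Ghat$, this is at most
\[
\exp\big(\kappa(C_{\rm lip}+c_{\Ghat}\eps_0)\operatorname{diam}\Ghat\big).
\]
One could do slightly better by noting that the principal root of a number in the right half-plane has argument in $(-\pi/4,\pi/4)$, so $\abs{\Im\sqrt q}\le\abs{\sqrt q}$. The crude bound suffices, however, since only a constant is needed.

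\textbf{Step 3: combine.} Dividing the bound of Step 2 by $4\pi c_0^{1/2}\abs{\xhat-\yhat}$ yields \eqref{eq:kernelbound}. Here $C_\kappa$ depends only on $\kappa$, $c_0$, $C_{\rm lip}$, $\eps_0$ and $\operatorname{diam}\Ghat$, with $c_{\Ghat}$ treated as a reference-geometry constant.

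\textbf{Step 4: holomorphy.} Fix $\xhat\ne\yhat$.
\begin{enumerate}[label=(\roman*)]
\item The map $\bz\mapsto\rr_{\bz}(\xhat)-\rr_{\bz}(\yhat)$ is affine in each coordinate. On a finite-dimensional section (finitely many $z_j$ free, the rest fixed) it is an absolutely convergent affine series, because $\sum_j\abs{z_j}b_j<\infty$ on $\mathcal O_{\brho}$. It is therefore jointly holomorphic there.
\item Consequently $q_{\bz}$ is a polynomial in the free variables.
\item By Step 1, $q_{\bz}$ takes values in the open right half-plane, where the principal square root is holomorphic and nonvanishing.
\item Composing with the entire map $s\mapsto e^{\mathrm i\kappa s}/(4\pi s)$, which is holomorphic away from $s=0$, gives holomorphy in each coordinate and joint holomorphy on sections.
\end{enumerate}

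The only delicate point is ensuring that the principal branch never meets its cut. This is exactly what \cref{lem:complex-separation} supplies, so no real obstacle remains.
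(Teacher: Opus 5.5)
Your proposal is correct and follows the paper's own proof essentially step for step: the lower bound $\abs{\sqrt{q_{\bz}}}\ge\sqrt{c_0}\abs{\xhat-\yhat}$ from complex separation, the upper bound $\abs{q_{\bz}}\le(C_{\rm lip}+c_{\Ghat}\eps_0)^2\abs{\xhat-\yhat}^2$ to control $\exp(-\kappa\Im\sqrt{q})$, and holomorphy by composition because $q_{\bz}$ stays in the open right half-plane. Two small wording slips. First, $s\mapsto e^{\mathrm i\kappa s}/(4\pi s)$ is not entire; it is holomorphic on $\C\setminus\{0\}$, which is all you need since $\sqrt{q_{\bz}}\neq0$. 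Second, the argument range $(-\pi/4,\pi/4)$ would give the sharper bound $\abs{\Im\sqrt q}\le\abs{\sqrt q}/\sqrt2$, not the trivial inequality you wrote.
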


\begin{proof}
Write $q=q_{\bz}(\xhat,\yhat)$. By \eqref{eq:qpositive},
$\Re q\ge c_0\abs{\xhat-\yhat}^2$, hence
$\abs{q}\ge c_0\abs{\xhat-\yhat}^2$ and
$\abs{\sqrt q}\ge\sqrt{c_0}\abs{\xhat-\yhat}$. In the notation of the
proof of \cref{lem:complex-separation},
$\abs{q}\le(\abs{a}+\abs{\bdelta})^2\le
(C_{\rm lip}+c_{\Ghat}\eps_0)^2\abs{\xhat-\yhat}^2$, so
$\abs{\Im\sqrt q}\le\abs{\sqrt q}\le
C\abs{\xhat-\yhat}\le C\operatorname{diam}\Ghat$ and therefore
$\abs{\exp(\mathrm i\kappa\sqrt q)}
=\exp(-\kappa\Im\sqrt q)\le e^{\kappa C\operatorname{diam}\Ghat}$.
Combining the two bounds gives \eqref{eq:kernelbound}. Holomorphy
follows from the affine dependence of $\rr_{\bz}$ on $\bz$, the
polynomial expression \eqref{eq:qz}, the fact that $q_{\bz}$ avoids the
branch cut, and holomorphic composition.
\end{proof}

\begin{remark}\label{rem:eps-star}
Each statement below carries its own admissibility threshold, every such
threshold is at most $\eps_0$, and \cref{lem:complex-separation} is
therefore available wherever it is invoked. Every holomorphic extension
is understood on the corresponding admissible tube
$\mathcal O_{\brho}$, with uniform approximation bounds recorded on
$\mathcal E_{\brho}$.
\end{remark}

\section{Maxwell traces, the EFIE, and the surface Piola map}
\label{sec:maxwell}

\subsection{Trace spaces and duality}

Let $\Gamma$ be a compact orientable Lipschitz boundary with outward
unit normal $\nn$. For $s\in[-1,1]$ let $\Hpar{s}{\Gamma}$ denote the
space of tangential fields whose Cartesian components lie in
$H^s(\Gamma)$, and set
\[
X(\Gamma):=\Hdiv{\Gamma}
=\big\{\uu\in\Hpar{-1/2}{\Gamma}:\divg\uu\in H^{-1/2}(\Gamma)\big\},
\]
and $Y(\Gamma):=\Hcurl{\Gamma}$,
the latter defined analogously with $\curlg$ in place of $\divg$, with
graph norms
\begin{equation}\label{eq:Xnorm}
\norm{\uu}_{X(\Gamma)}^2
=
\norm{\uu}_{\Hpar{-1/2}{\Gamma}}^2
+
\norm{\divg\uu}_{H^{-1/2}(\Gamma)}^2 .
\end{equation}

\begin{lemma}[Tangential spaces on $C^{1,1}$ surfaces]
\label{lem:tangential-identification}
Let $\Gamma$ be of class $C^{1,1}$ and $\abs{s}\le1$. Then
$\Hpar{s}{\Gamma}$ coincides, with equivalent norms, with the range of
the tangential projector $\Pi_{\Gamma}\uu=\uu-(\uu\cdot\nn)\nn$ acting
on $H^s(\Gamma;\C^3)$, and $\Pi_\Gamma$ is bounded on
$H^s(\Gamma;\C^3)$.
\end{lemma}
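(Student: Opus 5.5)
The plan is to reduce everything to a multiplier statement: for $\abs{s}\le1$, multiplication by a $W^{1,\infty}(\Gamma)$ function is bounded on $H^s(\Gamma)$. On a $C^{1,1}$ surface the unit normal $\nn$ is Lipschitz, so $\nn\in W^{1,\infty}(\Gamma;\R^3)$. Each entry of the matrix $\nn\nn^\top$ is therefore a $W^{1,\infty}$ function, and the projector acts componentwise as $(\Pi_\Gamma\uu)_m=u_m-\sum_k n_m n_k u_k$.

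I will proceed in four steps.

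\textbf{Step 1: multipliers on the $C^{1,1}$ surface.} Working in the finite atlas of \cref{ass:reference}, the spaces $H^s(\Gamma)$ for $\abs{s}\le1$ are well defined through charts and the partition of unity. Here the $C^{1,1}$ regularity ensures that the pullbacks by the charts are bounded on $H^1$, hence on $H^s$ for $\abs{s}\le1$ by interpolation and duality.

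\textbf{Step 2: multipliers in the plane.} For $\phi\in W^{1,\infty}(\R^2)$, multiplication by $\phi$ is bounded on $L^2$ and on $H^1$ by the Leibniz rule, with norm at most $C\norm{\phi}_{W^{1,\infty}}$. Interpolation gives boundedness on $H^s$ for $0\le s\le1$. Since the operator is its own transpose, duality gives the same for $-1\le s\le 0$. Transferring through the charts, using the $C^{1,1}$ bounds on the charts and the Lipschitz partition of unity, yields boundedness on $H^s(\Gamma)$.

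\textbf{Step 3: boundedness of $\Pi_\Gamma$.} Applying Step 2 to the entries $n_mn_k$ shows that $\Pi_\Gamma$ is bounded on $H^s(\Gamma;\C^3)$.

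\textbf{Step 4: identification of the range.} The space $\Hpar{s}{\Gamma}$ consists of fields in $H^s(\Gamma;\C^3)$ that are tangential. For $s<0$, tangentiality is understood as $\uu\cdot\nn=0$ in the distributional sense. This product is again well defined by the multiplier bound, since $\uu\cdot\nn=\sum_k n_ku_k$.

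If $\uu$ is tangential, then $\Pi_\Gamma\uu=\uu$, so $\uu$ lies in the range. Conversely, $\nn\cdot\Pi_\Gamma\uu=\nn\cdot\uu-\abs{\nn}^2(\uu\cdot\nn)=0$. For $s\ge0$ this holds pointwise a.e. For negative $s$, I will justify it by density of smooth fields and continuity of the multipliers.

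The equivalence of norms follows from two bounds. First, $\norm{\Pi_\Gamma\uu}_{H^s}\le C\norm{\uu}_{H^s}$. Second, on the range the norm of $\Pi_\Gamma\uu$ is the $H^s(\Gamma;\C^3)$ norm itself, because $\Hpar{s}{\Gamma}$ carries the componentwise norm. The range is closed because $\Pi_\Gamma$ is a bounded idempotent.

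\textbf{Main obstacle.} The delicate point is the negative-order case together with the chart transfer. I must check that the duality pairing on $\Gamma$, which involves the surface measure with its Lipschitz Jacobian, is compatible with the chart-based definition of $H^{-s}$. I must also check that the identity $\Pi_\Gamma^2=\Pi_\Gamma$ persists in $H^{-1}$. I would handle both by density of $C^{0,1}$ fields together with the uniform multiplier bounds from Step 2.
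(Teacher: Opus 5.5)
Your proposal is correct and follows the paper's own route. Because $\nn\in W^{1,\infty}$ on a $C^{1,1}$ surface, the entries of $\nn\nn^\top$ are $W^{1,\infty}$ multipliers, bounded on $L^2$ and $H^1$, hence on $H^s$ for $0\le s\le1$ by interpolation and for $-1\le s<0$ by duality. Your explicit identification of the range (idempotence, and $\nn\cdot\Pi_\Gamma\uu=0$ by density) fills in what the paper leaves implicit. The chart-compatibility issue you flag as the main obstacle can be sidestepped by doing the duality step intrinsically on $\Gamma$, since multiplication by the real symmetric matrix $\nn\nn^\top$ is its own transpose for the $L^2(\Gamma)$ pairing.
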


\begin{proof}
For a $C^{1,1}$ surface, $\nn\in W^{1,\infty}(\Gamma;\R^3)$, hence
$\uu\mapsto(\uu\cdot\nn)\nn$ is multiplication by a $W^{1,\infty}$
matrix field, which is bounded on $H^s(\Gamma)$ for $0\le s\le1$ by
interpolation between $L^2$ and $H^1$ and, for $-1\le s<0$, by duality.
\end{proof}

\Cref{lem:tangential-identification} justifies the componentwise
application of scalar operators to tangential fields in
\cref{sec:scalarSL}; on merely Lipschitz surfaces the identification
fails and the spaces $\Hpar{\pm1/2}{\Gamma}$ must be defined
intrinsically as in Buffa, Costabel and Sheen~\cite{BuffaCostabelSheen2002}. This is another point at which \cref{ass:reference} is used; see \cref{sec:open}.

The tangential trace $\gamma_D\EE:=\nn\times(\EE|_\Gamma\times\nn)$ is
continuous and surjective from $\boldsymbol H(\curl,D)$ onto $Y(\Gamma)$.
The twisted trace $\gamma_\times\EE:=\nn\times\EE|_\Gamma$ is
continuous and surjective onto $X(\Gamma)$~\cite{BuffaCostabelSheen2002,BuffaCostabelSchwab2002,BuffaHiptmair2003,Monk}.
The two spaces are in duality. The $L^2$ pairing of smooth tangential
fields,
\begin{equation}\label{eq:taupairing}
\pairt{\uu}{\vv}:=\int_\Gamma\uu\cdot\vv\ds ,
\end{equation}
extends to a continuous, nondegenerate, complex-bilinear pairing on
$Y(\Gamma)\times X(\Gamma)$ which realizes $Y(\Gamma)$ as the
(complex-linear) dual $X(\Gamma)'$. We denote by
\begin{equation}\label{eq:Idual}
\Iop_\Gamma:Y(\Gamma)\xrightarrow{\ \sim\ }X(\Gamma)'
\end{equation}
the resulting topological isomorphism, the norms being equivalent but
not, with the conventions above, equal. The rotation
$\Rop_\Gamma:\vv\mapsto\vv\times\nn$ is an isomorphism
$Y(\Gamma)\to X(\Gamma)$. Our conventions agree with those of Refs.~\refcite{JerezSchwab2017} and~\refcite{EscapilJerez2024} up to trace rotations. For the
reference boundary we abbreviate $\Xhat:=X(\Ghat)$ and
$\widehat Y:=Y(\Ghat)$.

\begin{remark}[Bilinear, not sesquilinear]\label{rem:bilinear}
The pairing \eqref{eq:taupairing} carries no complex conjugation. As observed in Ref.~\refcite{JerezSchwabZech2017}, conjugating the test
function introduces antiholomorphic dependence and therefore precludes
complex Fr\'echet differentiability. All dualities in this paper are
complex bilinear. If a Hilbert space formulation is written
sesquilinearly, one passes to the linear-dual version by composing the
antilinear dual with complex conjugation; invertibility is unaffected.
Throughout we identify bounded complex-bilinear forms on $X$ with
elements of $\cL(X,X')$ by
\[
\pair{\Phi(b)u}{v}=b(u,v) .
\]
This identification is isometric. Hence a family of forms is
holomorphic if and only if the associated operator family is. We prove
\cref{thm:operator} at the level of forms.
\end{remark}

\subsection{Surface Piola map}

For $\by\in\U$ define the surface contravariant Piola push-forward by
\begin{equation}\label{eq:piola}
(\Pop_{\by}\widehat\uu)\circ\rr_{\by}
=
J_{\by}^{-1}F_{\by}\widehat\uu .
\end{equation}
Its extension to $\Hdiv{\Gamma}$ and the commuting property
\eqref{eq:divcommute} below are consistent with the trace-space
transformation results of Refs.~\refcite{BuffaCostabelSheen2002} and~\refcite{BuffaHiptmair2003}. The additional point
established here is uniformity of the bounds over $\by\in\U$, under the
stronger $C^{1,1}$ hypotheses of \cref{ass:param}.

\begin{lemma}[Piola isomorphism]\label{lem:piola}
Under \cref{ass:reference,ass:param}, the map \eqref{eq:piola} extends
to an isomorphism $\Pop_{\by}:\Xhat\to X(\Gamma_{\by})$ satisfying
\begin{equation}\label{eq:divcommute}
\big(\divy\Pop_{\by}\widehat\uu\big)\circ\rr_{\by}
=
J_{\by}^{-1}\divh\widehat\uu
\qquad\text{in }H^{-1/2}(\Ghat) .
\end{equation}
Moreover there is $C_\Pop\ge1$, independent of $\by$, with
\begin{equation}\label{eq:piolabound}
\sup_{\by\in\U}\Big(
\norm{\Pop_{\by}}_{\cL(\Xhat,X(\Gamma_{\by}))}
+\norm{\Pop_{\by}^{-1}}_{\cL(X(\Gamma_{\by}),\Xhat)}\Big)
\le C_\Pop .
\end{equation}
\end{lemma}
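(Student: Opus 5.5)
We prove the divergence identity \eqref{eq:divcommute} first for smooth fields and then obtain the isomorphism and the uniform bounds \eqref{eq:piolabound} by a duality argument.

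\textbf{Step 1: the identity for smooth fields.} Take $\widehat\uu\in\Hpar{0}{\Ghat}$ with $\divh\widehat\uu\in L^2(\Ghat)$, for instance Lipschitz tangential fields, which are dense in $\Xhat$. Let $\varphi$ be a Lipschitz test function on $\Gamma_{\by}$ and set $\widehat\varphi:=\varphi\circ\rr_{\by}$. By the chain rule for tangential gradients,
\[
(\nabla_{\Gamma_{\by}}\varphi)\circ\rr_{\by}\cdot F_{\by}\widehat\uu=\nabla_{\Ghat}\widehat\varphi\cdot\widehat\uu .
\]
The change of variables $\ds=J_{\by}\ds$ then gives
\[
\int_{\Gamma_{\by}}\Pop_{\by}\widehat\uu\cdot\nabla_{\Gamma_{\by}}\varphi\ds
=\int_{\Ghat}\widehat\uu\cdot\nabla_{\Ghat}\widehat\varphi\ds .
\]
Here the factor $J_{\by}^{-1}$ cancels exactly. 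Surface integration by parts on both sides yields \eqref{eq:divcommute} in the weak sense. The identity therefore holds with $L^2$ data, and by duality it holds for $\divh\widehat\uu\in H^{-1/2}$ once the map has been extended.

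\textbf{Step 2: uniform multiplier and composition bounds.} The bounds come from two ingredients.
\begin{itemize}[leftmargin=*]
\item Multiplication by $J_{\by}^{\pm1}$ and by $F_{\by}$ acts boundedly on $H^{s}$ for $\abs s\le1$, uniformly in $\by$. This holds because $J_{\by}$ and $F_{\by}$ are $W^{1,\infty}$, by \cref{ass:param} together with \eqref{eq:Jreal}. The argument is the one used in \cref{lem:tangential-identification}: interpolate between $L^2$ and $H^1$, then use duality for negative $s$.
\item Composition with $\rr_{\by}$ is an isomorphism $H^{s}(\Gamma_{\by})\to H^{s}(\Ghat)$ for $\abs s\le1$, uniformly in $\by$. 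For $s=0$ and $s=1$ this follows from the uniform bi-Lipschitz bound \eqref{eq:bilip} and \eqref{eq:Jreal}. Interpolation gives $s=1/2$, and duality, with the Jacobian weight, gives negative $s$.
\end{itemize}
Using \cref{lem:tangential-identification} componentwise, $\Pop_{\by}$ maps $\Hpar{-1/2}{\Ghat}$ boundedly into $\Hpar{-1/2}{\Gamma_{\by}}$. Tangentiality is automatic, since the range of $F_{\by}$ is the tangent plane. Combined with \eqref{eq:divcommute} and the composition bound at $s=-1/2$, this controls the divergence part of the graph norm \eqref{eq:Xnorm}.

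\textbf{Step 3: the inverse.} The inverse is given explicitly by
\[
\Pop_{\by}^{-1}\uu=J_{\by}F_{\by}^{\dagger}(\uu\circ\rr_{\by}),
\]
where $F_{\by}^{\dagger}=(F_{\by}^\top F_{\by})^{-1}F_{\by}^\top$ is the left inverse on tangent vectors. The metric $F_{\by}^\top F_{\by}$ is uniformly positive definite, because the lower bound $c_{\rm lip}$ in \eqref{eq:bilip} bounds $F_{\by}$ from below. Hence $F_{\by}^\dagger\in W^{1,\infty}$ uniformly in $\by$, and the same estimates as in Step 2 apply. Density of smooth fields extends both maps and both identities, which proves \eqref{eq:piolabound}.

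\textbf{Main obstacle.} The delicate point is Step 2 in negative order: making the $H^{-1/2}$ change-of-variables bound uniform in $\by$. Our plan is to argue by duality against $H^{1/2}$. Concretely, we write $\langle f\circ\rr_{\by},g\rangle_{\Ghat}=\langle f,(gJ_{\by}^{-1})\circ\rr_{\by}^{-1}\rangle_{\Gamma_{\by}}$. This reduces the problem to the positive-order bounds and to the uniform $W^{1,\infty}$ control of $J_{\by}^{-1}$.
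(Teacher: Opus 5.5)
Your proposal is correct and follows essentially the same route as the paper. Both prove the weak form of \eqref{eq:divcommute} via the tangential chain rule with exact cancellation of $J_{\by}$, then use uniform $W^{1,\infty}$ multiplier bounds for $F_{\by}$ and $J_{\by}^{\pm1}$ and uniform bi-Lipschitz composition bounds (extended to negative order by duality), transferred through \cref{lem:tangential-identification} and closed by density. The only difference is that you write the inverse explicitly as $J_{\by}F_{\by}^{\dagger}(\uu\circ\rr_{\by})$ with a uniform bound from $c_{\rm lip}$, whereas the paper identifies it as the Piola transform associated with $\rr_{\by}^{-1}$ and estimates it identically.
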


\begin{proof}
For $\widehat\uu\in C^{0,1}$ tangential, \eqref{eq:divcommute} is the
classical surface Piola identity, obtained from the definition of
$\divy$ by duality against $\phi\in C^{0,1}(\Gamma_{\by})$. Set
$\widehat\phi:=\phi\circ\rr_{\by}$. Then
\[
\nabla_{\Gamma_{\by}}\phi\circ\rr_{\by}
=F_{\by}^{-\top}\nabla_{\Ghat}\widehat\phi ,
\]
where $F_{\by}^{-\top}$ denotes the inverse transpose between the
corresponding tangent spaces, represented in local bases by the inverse
transpose of the $2\times2$ tangential Jacobian. Consequently
\begin{align*}
-\int_{\Gamma_{\by}}\Pop_{\by}\widehat\uu\cdot
\nabla_{\Gamma_{\by}}\phi\ds
&=-\int_{\Ghat}J_{\by}^{-1}F_{\by}\widehat\uu\cdot
F_{\by}^{-\top}\nabla_{\Ghat}\widehat\phi\,J_{\by}\ds
\\
&=-\int_{\Ghat}\widehat\uu\cdot\nabla_{\Ghat}\widehat\phi\ds
=\int_{\Ghat}\divh\widehat\uu\,\widehat\phi\ds ,
\end{align*}
which is \eqref{eq:divcommute} after undoing the change of variables.
For the mapping property, multiplication by $F_{\by}$ is uniformly
bounded on $H^s$, $\abs{s}\le1$, because $F_{\by}\in W^{1,\infty}$
uniformly by \cref{ass:param}. Likewise
$J_{\by}^{-1}\in W^{1,\infty}$ uniformly, by \eqref{eq:Jreal} and the
uniform $C^{1,1}$ regularity. Composition with $\rr_{\by}$ is uniformly
bounded on $H^s$, since the maps $\rr_{\by}$ are uniformly bi-Lipschitz
and uniformly $C^{1,1}$. \Cref{lem:tangential-identification} therefore
gives uniform boundedness of
\[
\Pop_{\by}:\Hpar{-1/2}{\Ghat}\to\Hpar{-1/2}{\Gamma_{\by}} .
\]
The divergence identity \eqref{eq:divcommute}, together with the same
argument applied to the scalar $\divh\widehat\uu\in H^{-1/2}$, controls
the second term in \eqref{eq:Xnorm}. The inverse of $\Pop_{\by}$ is the contravariant surface Piola
transform associated with $\rr_{\by}^{-1}:\Gamma_{\by}\to\Ghat$, and is
estimated identically. Finally, the identity \eqref{eq:divcommute} and the bounds extend from
Lipschitz tangential fields to all of $\Xhat$ by density. On the
$C^{1,1}$ surfaces of \cref{ass:reference}, the density of smooth
tangential fields in $\Hdiv{\Ghat}$ is part of the trace-space theory of Refs.~\refcite{BuffaCostabelSheen2002},~\refcite{BuffaCostabelSchwab2002} and~\refcite{BuffaHiptmair2003}.
\end{proof}

The role of $\Pop_{\by}$ is analogous to that of the curl-conforming
volume pullback $\uu\mapsto DT^\top(\uu\circ T)$ of Refs.~\refcite{JerezSchwabZech2017},~\refcite{AylwinJerezSchwabZech2020} and~\refcite{AylwinJerez2023}, but now on the boundary and adapted to the surface divergence.

\subsection{Perfect conductor scattering and the EFIE}

Let $D\subset\R^3$ be a bounded perfect conductor with boundary $\Gamma$
and exterior $D^c$, and let $\kappa>0$ be the exterior wavenumber, with a fixed
time-harmonic convention. Given an entire incident
field with $\curl\curl\EE^{\rm inc}-\kappa^2\EE^{\rm inc}=0$, the
scattered field solves
$\curl\curl\EE^{\rm sc}-\kappa^2\EE^{\rm sc}=0$ in $D^c$ together with
the Silver--M\"uller radiation condition and
$\gamma_D(\EE^{\rm sc}+\EE^{\rm inc})=0$ on $\Gamma$.

With the outgoing fundamental solution
$G_\kappa(x,y)=e^{\mathrm i\kappa\abs{x-y}}/(4\pi\abs{x-y})$, the
electric potential of a surface current $\jj\in X(\Gamma)$ is
\begin{equation}\label{eq:electric-potential}
\Et_\kappa\jj(x)
=
\mathrm i\kappa\int_\Gamma G_\kappa(x,y)\jj(y)\ds_y
-\frac1{\mathrm i\kappa}\nabla_x
\int_\Gamma G_\kappa(x,y)\divg\jj(y)\ds_y ,
\qquad x\notin\Gamma,
\end{equation}
which is the convention of Refs.~\refcite{JerezSchwab2017} and~\refcite{EscapilJerez2024} up to trace rotations and time-harmonic signs. The electric field integral
operator is the (continuous, trace-jump-free) tangential trace
\begin{equation}\label{eq:Tdef}
\Top_{\kappa,\Gamma}:=\gamma_D\Et_\kappa
\ \in\ \cL\big(X(\Gamma),Y(\Gamma)\big) ,
\end{equation}
and we set
\begin{equation}\label{eq:Adef}
\Aop_{\kappa,\Gamma}:=\Iop_\Gamma\Top_{\kappa,\Gamma}
\in\cL\big(X(\Gamma),X(\Gamma)'\big) ,
\end{equation}
with $\Iop_\Gamma$ from \eqref{eq:Idual}.

\begin{proposition}[Variational form of the EFIE operator]
\label{prop:efieform}
For all $\jj,\vv\in X(\Gamma)$,
\begin{equation}\label{eq:efie-form}
\begin{aligned}
a_\Gamma(\jj,\vv)
:=\pair{\Aop_{\kappa,\Gamma}\jj}{\vv}
={}&
\mathrm i\kappa\iint_{\Gamma\times\Gamma}
G_\kappa(x,y)\,\jj(y)\cdot\vv(x)\ds_y\ds_x
\\
&+\frac1{\mathrm i\kappa}\iint_{\Gamma\times\Gamma}
G_\kappa(x,y)\,\divg\jj(y)\,\divg\vv(x)\ds_y\ds_x ,
\end{aligned}
\end{equation}
and $a_\Gamma$ is a bounded complex-bilinear form on
$X(\Gamma)\times X(\Gamma)$.
\end{proposition}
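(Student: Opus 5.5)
The plan is to verify the identity first for smooth tangential densities, then extend it by density and continuity. Fix $\jj,\vv$ Lipschitz tangential on $\Gamma$; these are dense in $X(\Gamma)$ by the trace-space theory cited before \cref{lem:piola}. For $x\notin\Gamma$, write $\Et_\kappa\jj=\mathrm i\kappa\,\Psi\jj-\frac1{\mathrm i\kappa}\nabla\psi(\divg\jj)$, where $\Psi$ is the componentwise vector single-layer potential and $\psi$ the scalar one. Both potentials have continuous traces across $\Gamma$. The tangential trace of the first term is therefore $\Pi_\Gamma\Vop_\kappa\jj$, with $\Vop_\kappa$ the boundary single layer. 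For the gradient term, $\gamma_D\nabla\psi(f)=\nabla_\Gamma(\Vop_\kappa f)$, since the tangential part of the gradient involves only the continuous trace; the normal-derivative jump is discarded by $\gamma_D$. Hence
\[
\Top_{\kappa,\Gamma}\jj=\mathrm i\kappa\,\Pi_\Gamma\Vop_\kappa\jj-\tfrac1{\mathrm i\kappa}\nabla_\Gamma\Vop_\kappa(\divg\jj).
\]

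Next, pair this with $\vv$ through \eqref{eq:taupairing}. Since $\vv$ is tangential, the projector $\Pi_\Gamma$ drops and the first term gives $\mathrm i\kappa\int_\Gamma\Vop_\kappa\jj\cdot\vv\ds$. Writing out $\Vop_\kappa$ and using Fubini produces the first double integral; Fubini applies because the kernel is weakly singular, $O(|x-y|^{-1})$, and hence integrable on $\Gamma\times\Gamma$. For the second term, integrate by parts on the closed surface, using the duality definition of $\divg$ (the same one used in the proof of \cref{lem:piola}): $-\int_\Gamma\nabla_\Gamma u\cdot\vv\ds=\int_\Gamma u\,\divg\vv\ds$. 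Combined with the sign $-\frac{1}{\mathrm i\kappa}$, this yields $+\frac1{\mathrm i\kappa}\int_\Gamma\Vop_\kappa(\divg\jj)\divg\vv\ds$, which is the second double integral. This establishes \eqref{eq:efie-form} on the dense subspace.

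For boundedness and extension, I would invoke the classical mapping property $\Vop_\kappa:H^{-1/2}(\Gamma)\to H^{1/2}(\Gamma)$ on Lipschitz boundaries~\cite{Costabel1988}, applied componentwise via \cref{lem:tangential-identification} to $\jj\in\Hpar{-1/2}{\Gamma}$. Each term is then bounded by $C\norm{\jj}_{X(\Gamma)}\norm{\vv}_{X(\Gamma)}$, using the graph norm \eqref{eq:Xnorm} and the $H^{1/2}$--$H^{-1/2}$ duality. For the vector term, one first pairs $\Vop_\kappa\jj\in H^{1/2}(\Gamma;\C^3)$ against $\vv$, whose Cartesian components lie in $H^{-1/2}$. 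On the left-hand side, $\Aop_{\kappa,\Gamma}=\Iop_\Gamma\Top_{\kappa,\Gamma}$ is continuous by \eqref{eq:Tdef}--\eqref{eq:Idual}. Both sides are thus continuous bilinear forms that agree on a dense set, so they agree on $X(\Gamma)\times X(\Gamma)$, with the double integrals read as the duality pairings above.

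The main obstacle is the trace step $\gamma_D\nabla\psi(f)=\nabla_\Gamma\Vop_\kappa f$ for the distributional datum $f=\divg\jj\in H^{-1/2}$, together with justifying that the pairing $\Iop_\Gamma$ matches the $L^2$ integral. I would handle this by doing all computations for Lipschitz densities, where the potentials are regular up to $\Gamma$ from each side, and letting continuity carry the identity over. The continuity of $\gamma_D\Et_\kappa$ across $\Gamma$ is exactly the ``trace-jump-free'' property asserted in \eqref{eq:Tdef}.
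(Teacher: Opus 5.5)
Your proposal is correct and follows essentially the same route as the paper. Both verify the identity for regular tangential densities by pairing the electric potential with $\vv$, discarding the normal part of the gradient, and integrating by parts on the closed surface. Both then obtain boundedness from the single-layer mapping property $H^{-1/2}(\Gamma)\to H^{1/2}(\Gamma)$, applied componentwise via \cref{lem:tangential-identification}, and extend by density. Your explicit formula for $\Top_{\kappa,\Gamma}\jj$ in terms of boundary single layers is just a more detailed version of the paper's pairing step.
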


\begin{proof}
For smooth tangential $\jj,\vv$, pair \eqref{eq:electric-potential}
with $\vv$ using \eqref{eq:taupairing}. In the second term only the
tangential part of $\nabla_x$ contributes, so with
$u:=\int_\Gamma G_\kappa(\cdot,y)\divg\jj(y)\ds_y$,
\[
-\frac1{\mathrm i\kappa}\int_\Gamma\nabla_xu\cdot\vv\ds_x
=-\frac1{\mathrm i\kappa}\int_\Gamma\nabla_\Gamma u\cdot\vv\ds_x
=+\frac1{\mathrm i\kappa}\int_\Gamma u\,\divg\vv\ds_x ,
\]
the last step being integration by parts on the closed surface
$\Gamma$. This yields \eqref{eq:efie-form}, whose right-hand side is
$\mathrm i$ times the classical EFIE form
$\kappa\iint G_\kappa\jj\cdot\vv-\kappa^{-1}\iint
G_\kappa\divg\jj\divg\vv$. Boundedness follows from the classical
mapping property of the Helmholtz single layer,
$H^{-1/2}(\Gamma)\to H^{1/2}(\Gamma)$~\cite{Costabel1988,McLean,HsiaoWendland,SauterSchwab2011}. For the
$C^{1,1}$ surfaces considered here, \cref{lem:tangential-identification}
identifies $\Hpar{-1/2}{\Gamma}$ with the range of the tangential
projector in $H^{-1/2}(\Gamma;\C^3)$. The scalar single-layer mapping
theorem therefore applies componentwise to $\jj$, and directly to
$\divg\jj$. Together with \eqref{eq:Xnorm} this proves boundedness, and
the identity extends to $X(\Gamma)$ by density.
\end{proof}

\begin{remark}[Sign convention]\label{rem:sign}
The relative $+$ sign between the two terms of \eqref{eq:efie-form}
follows from the surface integration by parts and is consistent with
\eqref{eq:electric-potential}; the same sign is recovered from the
far-field formula \eqref{eq:farphys}, see \cref{sec:farfield}.
\end{remark}

The EFIE therefore reads
\begin{equation}\label{eq:efieabstract}
\Aop_{\kappa,\Gamma}\jj=f_\Gamma
\quad\text{in }X(\Gamma)' ,
\qquad
f_\Gamma:=-\Iop_\Gamma\gamma_D\EE^{\rm inc} ,
\end{equation}
and is uniquely solvable when $\kappa^2$ is not an interior electric
eigenvalue of $D$; see Refs.~\refcite{BuffaHiptmair2003} and~\refcite{Nedelec}, and the boundary integral analysis in Refs.~\refcite{JerezSchwab2017} and~\refcite{EscapilJerez2024}.
At resonant frequencies the direct EFIE loses invertibility even though
the exterior scattering problem remains uniquely solvable.

\section{Exact pullback and Jacobian cancellation}\label{sec:pullback}

For $\by\in\U$ define the pulled-back operator and form by
\begin{equation}\label{eq:pullback-def}
\widehat\Aop_{\by}:=\Pop_{\by}'\,\Aop_{\kappa,\Gamma_{\by}}\,\Pop_{\by}
\in\cL(\Xhat,\Xhat'),
\qquad
\widehat a_{\by}(\widehat\jj,\widehat\vv)
:=\pair{\widehat\Aop_{\by}\widehat\jj}{\widehat\vv}
=a_{\Gamma_{\by}}(\Pop_{\by}\widehat\jj,\Pop_{\by}\widehat\vv),
\end{equation}
$\Pop_{\by}'$ denoting the (complex-linear) dual of $\Pop_{\by}$. By
\cref{lem:piola}, $\widehat\Aop_{\by}$ is an isomorphism if and only if
$\Aop_{\kappa,\Gamma_{\by}}$ is, with
\begin{equation}\label{eq:inverse-transfer}
C_\Pop^{-2}\norm{\Aop_{\kappa,\Gamma_{\by}}^{-1}}
\le
\norm{\widehat\Aop_{\by}^{-1}}
\le
C_\Pop^{2}\norm{\Aop_{\kappa,\Gamma_{\by}}^{-1}} .
\end{equation}

\begin{proposition}[Piola cancellation]\label{prop:cancel}
For all $\widehat\jj,\widehat\vv\in\Xhat$ and all $\by\in\U$,
\begin{equation}\label{eq:fixed-form}
\begin{aligned}
\widehat a_{\by}(\widehat\jj,\widehat\vv)
={}&
\mathrm i\kappa
\iint_{\Ghat\times\Ghat}
G_\kappa\big(\rr_{\by}(\xhat),\rr_{\by}(\yhat)\big)
\big[F_{\by}(\yhat)\widehat\jj(\yhat)\big]
\cdot
\big[F_{\by}(\xhat)\widehat\vv(\xhat)\big]
\ds_{\yhat}\ds_{\xhat}
\\
&+\frac1{\mathrm i\kappa}
\iint_{\Ghat\times\Ghat}
G_\kappa\big(\rr_{\by}(\xhat),\rr_{\by}(\yhat)\big)
\,\divh\widehat\jj(\yhat)\,\divh\widehat\vv(\xhat)
\ds_{\yhat}\ds_{\xhat} .
\end{aligned}
\end{equation}
\end{proposition}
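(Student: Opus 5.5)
The plan is to prove \eqref{eq:fixed-form} first for a dense class of smooth tangential fields and then pass to all of $\Xhat$ by continuity. Fix $\by\in\U$ and take $\widehat\jj,\widehat\vv$ Lipschitz tangential fields on $\Ghat$. By definition \eqref{eq:pullback-def}, $\widehat a_{\by}(\widehat\jj,\widehat\vv)=a_{\Gamma_{\by}}(\jj,\vv)$ with $\jj:=\Pop_{\by}\widehat\jj$ and $\vv:=\Pop_{\by}\widehat\vv$. For these fields the two double integrals in \eqref{eq:efie-form} are genuine Lebesgue integrals, since $G_\kappa$ is weakly singular and the densities are bounded. So I can change variables $x=\rr_{\by}(\xhat)$, $y=\rr_{\by}(\yhat)$ in each factor, using $\ds_x=J_{\by}(\xhat)\ds_{\xhat}$ and the same for $y$. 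Since $\rr_{\by}:\Ghat\to\Gamma_{\by}$ is a bi-Lipschitz bijection (\cref{ass:param}), the area formula applies.

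\emph{First term.} By \eqref{eq:piola},
\[
\jj(\rr_{\by}(\yhat))=J_{\by}(\yhat)^{-1}F_{\by}(\yhat)\widehat\jj(\yhat),
\]
and likewise for $\vv$. The product $\jj(y)\cdot\vv(x)\ds_y\ds_x$ therefore becomes
\[
J_{\by}(\yhat)^{-1}J_{\by}(\xhat)^{-1}\,[F_{\by}\widehat\jj](\yhat)\cdot[F_{\by}\widehat\vv](\xhat)\,J_{\by}(\yhat)J_{\by}(\xhat)\ds_{\yhat}\ds_{\xhat}.
\]
The Jacobians cancel and we obtain the first line of \eqref{eq:fixed-form}.

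\emph{Second term.} Here I use the commuting identity \eqref{eq:divcommute} of \cref{lem:piola}:
\[
(\divy\jj)\circ\rr_{\by}=J_{\by}^{-1}\divh\widehat\jj,
\]
and the same for $\vv$. The factors $J_{\by}^{-1}$ again cancel the transformed surface measures. This gives the second line, with the kernel $G_\kappa(\rr_{\by}(\xhat),\rr_{\by}(\yhat))$. The bound \eqref{eq:Jreal} ensures that $J_{\by}^{-1}$ is well defined a.e.

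\emph{Extension by density.} The left-hand side is continuous on $\Xhat\times\Xhat$ because $a_{\Gamma_{\by}}$ is bounded (\cref{prop:efieform}) and $\Pop_{\by}$ is bounded (\cref{lem:piola}). For the right-hand side, the kernel $G_\kappa\circ(\rr_{\by}\times\rr_{\by})$ defines the pullback of the single layer on $\Gamma_{\by}$ conjugated by the Jacobian weights. Concretely, the right-hand side equals
\[
\pair{\Vop_{\Gamma_{\by}}(J_{\by}^{-1}\varphi\circ\rr_{\by}^{-1})\circ\rr_{\by}}{\psi}
\]
up to the weights, which are $W^{1,\infty}$ multipliers. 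It is therefore bounded
\[
H^{-1/2}(\Ghat)\times H^{-1/2}(\Ghat)\to\C
\]
by the classical single-layer mapping property on $\Gamma_{\by}$. The transport of $H^{\pm1/2}$ under $\rr_{\by}$ was already used in the proof of \cref{lem:piola}. Multiplication by $F_{\by}$ maps $\Hpar{-1/2}{\Ghat}$ into $H^{-1/2}(\Ghat;\C^3)$, as shown there. Both sides of \eqref{eq:fixed-form} are thus continuous on $\Xhat\times\Xhat$ and agree on the dense subspace of smooth tangential fields (density as cited in \cref{lem:piola}), so they agree everywhere.

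The main obstacle I expect is making this density step precise. The right-hand side of \eqref{eq:fixed-form} must be interpreted as a duality pairing and not a literal integral once $\divh\widehat\jj$ is only in $H^{-1/2}$. The cleanest route is to define it as exactly this transported single-layer pairing, so that the identity is literally the pointwise change of variables already verified on smooth fields.
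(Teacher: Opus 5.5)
Your proposal is correct and is essentially the paper's proof. It verifies the identity on Lipschitz tangential fields by the change of variables $x=\rr_{\by}(\xhat)$, $y=\rr_{\by}(\yhat)$, uses \eqref{eq:piola} for the vector term and \eqref{eq:divcommute} for the scalar term so that the inverse Jacobians cancel the transformed surface measures, and then extends by density. Your continuity argument for the right-hand side (the single layer on $\Gamma_{\by}$ transported to $\Ghat$ with $W^{1,\infty}$ weights) simply spells out what the paper compresses into ``invoke density and \cref{prop:efieform}''.
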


\begin{proof}
It suffices to treat Lipschitz tangential fields and to invoke density
and \cref{prop:efieform}. For the vector-potential term,
\eqref{eq:piola} gives
\[
(\Pop_{\by}\widehat\jj)(\rr_{\by}(\yhat))
=J_{\by}(\yhat)^{-1}F_{\by}(\yhat)\widehat\jj(\yhat),
\]
and likewise for $\widehat\vv$, while the change of variables
$x=\rr_{\by}(\xhat)$, $y=\rr_{\by}(\yhat)$ contributes the factor
$J_{\by}(\xhat)J_{\by}(\yhat)$, which cancels the two inverse
Jacobians. For the scalar-potential term, \eqref{eq:divcommute}
produces one inverse Jacobian for each surface divergence, and the two
surface measures cancel them exactly.
\end{proof}

\begin{remark}[Structure of the pulled-back form]\label{rem:significance}
Neither the unit normal, nor an inverse surface metric, nor a surface
Jacobian appears explicitly in \eqref{eq:fixed-form}: the divergence has
been frozen completely on the reference boundary, and the only
geometric quantities remaining are the deformed kernel and the first
tangential differential $F_{\by}$.
\end{remark}

\section{Complex-deformed scalar single layers}\label{sec:scalarSL}

\subsection{The mapping theorem}\label{sec:scalar-thm}

The additional step beyond the $L^2$ theory of Ref.~\refcite{DolzHenriquez2024} is a fractional Sobolev mapping result,
uniform over admissible complex deformations. For
$\bz\in\mathcal O_{\brho}$ define
\begin{equation}\label{eq:Vzdef}
(\Vop_{\bz}\phi)(\xhat)
:=
\int_{\Ghat}G_{\kappa,\bz}(\xhat,\yhat)\phi(\yhat)\ds_{\yhat} .
\end{equation}

\begin{theorem}[Fractional holomorphy of the deformed single layer]
\label{thm:scalar}
Under \cref{ass:reference,ass:param} there exists $\eps_1>0$ such that
for every $(\bb,\eps_1)$-admissible $\brho$:
\begin{enumerate}[label=\textup{(\roman*)}]
\item $\Vop_{\bz}$ extends uniquely to a bounded operator
$\Vop_{\bz}:H^{-1/2}(\Ghat)\to H^{1/2}(\Ghat)$ for every
$\bz\in\mathcal O_{\brho}$;
\item there is $C_V>0$, independent of $\brho$ and $\bz$, with
$\sup_{\bz\in\mathcal O_{\brho}}
\norm{\Vop_{\bz}}_{\cL(H^{-1/2},H^{1/2})}\le C_V$;
\item the map
$\bz\mapsto\Vop_{\bz}\in\cL(H^{-1/2}(\Ghat),H^{1/2}(\Ghat))$
is holomorphic in every coordinate on $\mathcal O_{\brho}$, jointly
holomorphic on finite-dimensional sections, and holomorphic along
admissible complex lines in the following sense: for every
$\bz\in\C^{\N}$, every direction $\bw$ with
$\sum_jb_j\abs{w_j}<\infty$, and every \emph{open disc}
$D\subset\C$ such that $\bz+\tau\bw\in\mathcal O_{\brho}$ for all
$\tau\in D$, the map $\tau\mapsto\Vop_{\bz+\tau\bw}$ is holomorphic in
the operator norm on $D$.
\end{enumerate}
\end{theorem}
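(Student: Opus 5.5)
We split the deformed kernel into a Laplace principal part and a more regular remainder, $G_{\kappa,\bz}=G_{0,\bz}+R_{\kappa,\bz}$, where $G_{0,\bz}=(4\pi\sqrt{q_{\bz}})^{-1}$ and $R_{\kappa,\bz}=(e^{\mathrm i\kappa\sqrt{q_{\bz}}}-1)/(4\pi\sqrt{q_{\bz}})$. We treat the two parts separately.

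\textbf{Remainder.} The remainder is bounded, since $\abs{e^{\mathrm i\kappa s}-1}\le C\abs{s}$. Using $q_{\bz}(\xhat,\yhat)=\abs{\xhat-\yhat}^2 M_{\bz}(\xhat,\yhat)$ together with the $W^{2,\infty}$ control of $\rr_{\bz}$ from \cref{lem:complex-separation}, it is also Lipschitz in each variable. An odd-in-$\sqrt q$ expansion shows that it is in fact $W^{1,\infty}$ in each variable away from a logarithmic-free singularity: its first derivatives are $O(1)$ uniformly on $\mathcal O_{\brho}$, because $\nabla\sqrt{q}$ is bounded. A kernel that is bounded with bounded first derivatives in each variable gives an operator $H^{-1}(\Ghat)\to H^{1}(\Ghat)$, hence in particular $H^{-1/2}\to H^{1/2}$. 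The bound is uniform on the tube, by the estimates of \cref{lem:kernel} and \eqref{eq:Fcomplex}.

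\textbf{Principal part.} This is the main obstacle, and we handle it by the ambient construction. Fix a real $\by$ with $\norm{F_{\bz}-F_{\by}}_{W^{1,\infty}}\le\eps_0$ as in \eqref{eq:Fclose}, so that $\bw=\bz-\by$ satisfies $\sum\abs{w_j}b_j\le\eps_1$.
\begin{enumerate}[label=\textup{(\arabic*)}]
\item Extend $\ps_j$ to compactly supported $W^{2,\infty}(\R^3;\R^3)$ fields $\Psi_j$ with $\norm{\Psi_j}_{W^{2,\infty}}\le Cb_j$, using a fixed extension operator on the $C^{1,1}$ surface.
\item Set $T_{\bz}=\Id+\sum_j z_j\Psi_j$. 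Write $T_{\bz}=T_{\by}+\Theta$, where $T_{\by}$ is a real bi-Lipschitz homeomorphism of $\R^3$ (after shrinking $\eps_1$ and using \eqref{eq:bilip}) and $\Theta$ is complex with $\norm{D\Theta}_{W^{1,\infty}}\le C\eps_1$.
\item Consider the complex coefficient $A_{\bz}=\det(DT_{\bz})\,DT_{\bz}^{-1}DT_{\bz}^{-\top}$. For real $\bz$, $-\operatorname{div}(A_{\bz}\nabla\cdot)$ is the pullback of the Laplacian, and its fundamental solution is $\bigl(4\pi\abs{T_{\bz}x-T_{\bz}y}\bigr)^{-1}$.
\item For small $\eps_1$, $A_{\bz}$ is a perturbation of the real uniformly elliptic $A_{\by}$, so it is uniformly sectorial: $\Re(A_{\bz}\xi\cdot\overline\xi)\ge c\abs{\xi}^2$, using the sesquilinear energy pairing of \cref{rem:sesqui}. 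Lax--Milgram on $\dot H^1(\R^3)$ then gives $L_{\bz}^{-1}:\dot H^{-1}\to\dot H^{1}$ with a uniform bound.
\item The solution map is holomorphic in $\bz$ in operator norm, because $\bz\mapsto A_{\bz}\in L^\infty$ is holomorphic and inversion is holomorphic.
\item Define $\widetilde\Vop_{\bz}\phi=\gamma L_{\bz}^{-1}\gamma'\phi$. The adjoint trace $\gamma':H^{-1/2}(\Ghat)\to \dot H^{-1}_{\rm comp}$ and the trace $\gamma:\dot H^1_{\rm loc}\to H^{1/2}(\Ghat)$ are fixed, so $\widetilde\Vop_{\bz}$ is uniformly bounded and holomorphic from $H^{-1/2}$ to $H^{1/2}$.
\end{enumerate}

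Next we must show that $\widetilde\Vop_{\bz}$ equals the integral operator with kernel $G_{0,\bz}$, or differs from it by a smoothing remainder. For real $\bz$ this is the change of variables above. For smooth $\phi,\chi$, both $\pair{\widetilde\Vop_{\bz}\phi}{\chi}$ and $\iint G_{0,\bz}\phi\chi$ are holomorphic on finite-dimensional sections: the first by (5), the second by \cref{lem:kernel} and dominated convergence with the bound \eqref{eq:kernelbound}. The two agree on the real section, which is totally real, so they agree by the identity theorem. One subtlety is that the complex kernel of $L_{\bz}$ restricted to $\Ghat$ must be $(4\pi\sqrt{q_{\bz}})^{-1}$ with the principal branch. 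This is ensured by continuity from real parameters, since $q_{\bz}$ stays in the right half-plane. Density of smooth functions then gives the unique extension in (i), and the bound (ii) is uniform in $\brho$ because all constants depend only on $\eps_1$ and the geometric constants.

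\textbf{Holomorphy (iii).} Coordinate holomorphy and joint holomorphy on finite sections follow from operator-norm holomorphy of the two parts. On complex lines with $\sum b_j\abs{w_j}<\infty$, $\tau\mapsto A_{\bz+\tau\bw}$ is holomorphic in $W^{1,\infty}$, because the series converges absolutely in $W^{2,\infty}$. The remainder kernel is likewise holomorphic in the norm used above. Locally bounded weakly holomorphic families are norm-holomorphic, which finishes the argument. The delicate point we expect is the uniform sectoriality and the identification of the analytic continuation of the ambient Green function with $G_{0,\bz}$ on $\Ghat$. The branch choice there needs the separation \eqref{eq:qpositive}, and it may force $\eps_1<\eps_0$.
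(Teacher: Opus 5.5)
Your overall architecture matches the paper's: the Laplace/Helmholtz split, a sectorial complex-coefficient problem on $\R^3$, the trace--Newton composition, and identification of the kernel by analytic continuation from real parameters. However, step (2) has a genuine gap. You claim that $T_{\by}=\Id+\sum_jy_j\Psi_j$ is a real bi-Lipschitz homeomorphism of $\R^3$ with uniformly positive Jacobian ``after shrinking $\eps_1$ and using \eqref{eq:bilip}''. Neither ingredient gives this. First, $\eps_1$ controls only the increment $\bz-\by$, whereas $\sum_jy_j\Psi_j$ has size $\sum_jb_j$, which \cref{ass:param} does not make small. Second, \eqref{eq:bilip} constrains $\rr_{\by}$ only on $\Ghat$ and says nothing about the Jacobian of a fixed linear extension off the surface.

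Rigid translations already break a fixed tubular extension. Take $\Ghat=\mathbb S^2$ with the single deformation $\ps_1\equiv\mathbf e_1$, so each $\rr_{\by}$ is a translation and \eqref{eq:bilip} holds with constants $1$. Use the tubular extension $\Psi_1((1+s)\xhat)=\chi(s)\mathbf e_1$ with $\chi(0)=1$, $\supp\chi\subset(-\delta,\delta)$ and $\delta<1$. On the positive $x_1$-axis, $\det DT_{\by}=1+y_1\chi'(s)$. Since $\chi$ rises from $0$ to $1$ over an interval shorter than $1$, this is negative somewhere when $y_1=-1$. There $A_{\by}$ is not positive definite, so both step (3) (change of variables by $T_{\by}$) and step (4) (Lax--Milgram by perturbing a uniformly elliptic $A_{\by}$) fail, and with them (i)--(iii). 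As written, your argument only covers an extra small-deformation regime, such as $C\sum_jb_j<1$ with $C$ the norm of your extension operator.

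The missing ingredient is \cref{lem:ambient}: for each $\by\in\U$, a global orientation-preserving bi-Lipschitz map $\mathcal T_{\by}$ with $\mathcal T_{\by}|_{\Ghat}=\rr_{\by}$ and constants uniform in $\by$. The paper does not obtain this by linear extension. Instead it integrates the flow of a velocity field that extends $\partial_t\rr_{t\by}=\ps_{\by}$ off the moving surfaces $\Gamma_{t\by}$, which requires a tubular-neighbourhood estimate uniform in $\by$ and $t$ (\cref{app:ambient}). A tubular extension $\mathsf E$ like yours is then applied only to the small increment, $\mathcal T_{\by,\bz}=\mathcal T_{\by}+\mathsf E(\rr_{\bz}-\rr_{\by})$, and your steps (4)--(6) go through for it. Because $\by\mapsto\mathcal T_{\by}$ is not affine (not even continuity in $\by$ is available), the Newton operators are then only local around each base point. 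You must therefore show that $\widehat\gamma\,\mathsf L_{\by,\bz}^{-1}\widehat\gamma^*$ is independent of $\by$, which follows from the kernel identification, in order to glue them into a single holomorphic family on $\mathcal O_{\brho}$ and along complex lines.

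There are also two smaller points. Your identity-theorem step reaches only those $\bz$ for which $\bz-\by$ has finite support, because a finite-dimensional section through a point with infinitely many nonreal coordinates contains no real points. The paper closes this by truncating $\bz^{(N)}\to\bz$, with operator-norm convergence on the Newton side via the resolvent identity and dominated convergence on the kernel side. Finally, bounded first derivatives of the remainder kernel in each variable give $L^2\to H^1$ (Schur) and $H^{-1}\to L^2$ (duality), hence $H^{-1/2}\to H^{1/2}$ by interpolation, which is all you need. They do not give $H^{-1}\to H^1$; that would require control of the mixed derivatives.
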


\Cref{thm:scalar} is proved by the fixed-domain trace--Newton
construction of \cref{sec:newton-route}, whose logical spine is
\begin{equation}\label{eq:chain}
\begin{aligned}
\text{complex geometry}
&\ \Longrightarrow\
\text{complex elliptic coefficient field on }\R^3
\\
&\ \Longrightarrow\ \text{Newton potential}
\ \Longrightarrow\ H^{-1/2}\to H^{1/2} ,
\end{aligned}
\end{equation}
and it uses only the complex separation of
\cref{lem:complex-separation}, standard elliptic theory on $\R^3$, and
analytic continuation from the real geometries. Constants depending on $\kappa$ are not tracked explicitly (those of Proposition 5.1 are $\kappa$-independent); we return to this in \cref{sec:open}.


\subsection{Proof by a fixed-domain trace--Newton construction}
\label{sec:newton-route}

For complex $\bz$, the set $\rr_{\bz}(\Ghat)\subset\C^3$ is not
regarded as a physical boundary; all partial differential equations
below are posed on the fixed real space $\R^3$. Near each real
geometry, the surface deformation is extended to a complex perturbation
$\mathcal T_{\by,\bz}$ of a real ambient bi-Lipschitz transformation
$\mathcal T_{\by}$ of $\R^3$.
\Cref{lem:ambient} below provides such a transformation with constants
uniform in $\by$; its proof, which is where the $C^{1,1}$ hypothesis
enters this route, is deferred to \cref{app:ambient}. One then
solves a uniformly coercive divergence-form problem on the fixed real
space with the complexified coefficient
\[
\mathcal A_{\by,\bz}
:=
\big(\det D\mathcal T_{\by,\bz}\big)\,
D\mathcal T_{\by,\bz}^{-1}D\mathcal T_{\by,\bz}^{-\top} .
\]
Complexification enters only through these coefficients. Writing
$\mathsf L_{\by,\bz}$ for the associated isomorphism
$\dot H^1(\R^3)\to\dot H^{-1}(\R^3)$ and $\widehat\gamma$ for the trace
on $\Ghat$, the smoothing appears as the composition
\begin{equation}\label{eq:newton-chain}
H^{-1/2}(\Ghat)
\xrightarrow{\ \widehat\gamma^*\ }
\dot H^{-1}(\R^3)
\xrightarrow{\ \mathsf L_{\by,\bz}^{-1}\ }
\dot H^{1}(\R^3)
\xrightarrow{\ \widehat\gamma\ }
H^{1/2}(\Ghat) ,
\end{equation}
that is, as the energy isomorphism $\dot H^{-1}\to\dot H^1$ followed by
the trace. \Cref{prop:newton} identifies
$\mathsf W_{\by,\bz}:=\widehat\gamma\mathsf L_{\by,\bz}^{-1}
\widehat\gamma^*$ with the complex-deformed Laplace single layer
$G_{0,\bz}=(4\pi)^{-1}q_{\bz}^{-1/2}$, first at real parameters and
then by analytic continuation. \Cref{lem:helm-corr} treats the
Helmholtz correction $G_{\kappa,\bz}-G_{0,\bz}$. That correction is
regular at the diagonal because
$s\mapsto(e^{\mathrm i\kappa s}-1)/(4\pi s)$ is entire. Together they prove \cref{thm:scalar} (\cref{prop:newton-proof}).


\subsubsection{Ambient realization of the real surface maps}

\begin{lemma}[Uniform ambient realization]\label{lem:ambient}
There are constants $C_{\mathcal T},c_{\mathcal T},C_E>0$ and a ball
$B\subset\R^3$ containing the closure of a uniform tubular
neighbourhood of every intermediate surface $\Gamma_{t\by}$,
$\by\in\U$, $t\in[0,1]$, such that
for each $\by\in\U$ there is an orientation-preserving bi-Lipschitz map
$\mathcal T_{\by}:\R^3\to\R^3$ with
\[
\mathcal T_{\by}|_{\Ghat}=\rr_{\by},
\qquad
\mathcal T_{\by}=\Id\ \text{ on }\R^3\setminus B,
\]
and, almost everywhere,
$\norm{D\mathcal T_{\by}}_{L^\infty}
+\norm{D\mathcal T_{\by}^{-1}}_{L^\infty}\le C_{\mathcal T}$ and
$\det D\mathcal T_{\by}\ge c_{\mathcal T}$. Moreover there is a bounded
linear extension operator
$\mathsf E:W^{1,\infty}(\Ghat;\C^3)\to W^{1,\infty}_0(B;\C^3)$ with
\[
(\mathsf Eh)|_{\Ghat}=h ,
\qquad
\norm{\mathsf Eh}_{W^{1,\infty}(\R^3)}
\le C_E\norm{h}_{W^{1,\infty}(\Ghat)} .
\]
\end{lemma}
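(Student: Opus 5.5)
The plan is to build $\mathcal T_{\by}$ as a collar perturbation of the identity, localized near $\Ghat$, and to deduce injectivity and the bi-Lipschitz bounds from a homotopy/degree argument in $t\in[0,1]$. The first step is the extension operator $\mathsf E$. Since $\Ghat$ is $C^{1,1}$, it has a tubular neighbourhood $N_\delta=\{\xhat+s\nn(\xhat):\xhat\in\Ghat,\ \abs s<\delta\}$ on which the nearest-point projection $\pi$ and the signed distance $s$ are Lipschitz, and in fact $\pi\in W^{1,\infty}$ because $\nn\in W^{1,\infty}$. Fix a cutoff $\chi\in C_c^\infty(-\delta,\delta)$ with $\chi=1$ near $0$, and set
\[
\mathsf Eh(x):=\chi(s(x))\,h(\pi(x)),\qquad x\in N_\delta,
\]
extended by zero elsewhere. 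This operator is linear, satisfies $(\mathsf Eh)|_{\Ghat}=h$, and obeys $\norm{\mathsf Eh}_{W^{1,\infty}}\le C_E\norm{h}_{W^{1,\infty}(\Ghat)}$ by the chain rule for Lipschitz compositions. Take $B$ to be a ball containing $\overline{N_\delta}$ together with a neighbourhood of every $\Gamma_{t\by}$; the latter are uniformly bounded since $\sum_jb_j<\infty$.

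The natural candidate is $\mathcal T_{\by}:=\Id+\mathsf E(\rr_{\by}-\Id)=\Id+\sum_jy_j\mathsf E\ps_j$. It restricts to $\rr_{\by}$ on $\Ghat$ and equals $\Id$ off $B$. The difficulty is that $\norm{D\mathsf E(\rr_{\by}-\Id)}_{L^\infty}$ need not be small, because \cref{ass:param} only gives bi-Lipschitz bounds, not smallness. So $\Id+\mathsf E(\cdot)$ is not automatically invertible. I would handle this in two cases.

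\textbf{Small deformations.} If $C_E\sum_jb_j<1$ (the regime of the satisfiability remark), then $\mathcal T_{\by}$ is a Lipschitz perturbation of $\Id$ with Lipschitz constant less than $1$. It is therefore a global bi-Lipschitz homeomorphism with $\det D\mathcal T_{\by}\ge(1-C_E\sum b_j)^3$, and all constants are uniform in $\by$. Orientation preservation follows because the homotopy $t\mapsto\mathcal T_{t\by}$ consists of homeomorphisms starting at $\Id$.

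\textbf{General case under \eqref{eq:bilip}.} Here I would replace the normal-collar extension by one adapted to $\Gamma_{\by}$ itself. Let $\widetilde\nn_{\by}$ be a transversal Lipschitz field on $\Gamma_{\by}$ (a mollified unit normal), and define
\[
\mathcal T_{\by}(\xhat+s\nn(\xhat)):=\rr_{\by}(\xhat)+s\,\widetilde\nn_{\by}(\rr_{\by}(\xhat))\qquad\text{for }\abs s<\delta'.
\]
This map is locally bi-Lipschitz by the uniform $C^{1,1}$ bounds and the uniform nondegeneracy of $F_{\by}$, which follows from \eqref{eq:Jreal}. Global injectivity on the collar comes from \eqref{eq:bilip}: two points with $\abs{\rr_{\by}(\xhat)-\rr_{\by}(\yhat)}\ge c_{\rm lip}\abs{\xhat-\yhat}$ cannot be joined by short normal segments when $\delta'\ll c_{\rm lip}$. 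On $\widehat D\setminus N_\delta$ and in the exterior, I would glue to a bi-Lipschitz map between the complementary regions. The interior region is the delicate part. There I would use the connectedness of $\U$: along the path $t\mapsto t\by$ the collar maps vary continuously in $W^{1,\infty}$, so one can subdivide $[0,1]$ into finitely many steps, with the number of steps uniform by compactness of $\U$ and uniform moduli. Each step is a small perturbation handled by the small-deformation case, and composing the steps yields $\mathcal T_{\by}$.

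The determinant lower bound and the bound on $\norm{D\mathcal T_{\by}^{-1}}$ then follow from the product of the uniformly many factors.

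I expect the main obstacle to be this general case: keeping the number of subdivision steps uniform in $\by\in\U$. For that I would rely on the uniform equicontinuity of $\by\mapsto\rr_{\by}$ in $W^{1,\infty}$, which follows from $\bb\in\ell^1$, together with compactness of $\U$ in the product topology. I would try the collar-plus-subdivision argument first. If it cannot be made to work, I would fall back to assuming the smallness regime, which suffices for the applications.
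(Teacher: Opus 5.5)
Your extension operator $\mathsf E$ is the paper's construction, and your small-deformation case is correct. Subdividing the isotopy $t\mapsto\rr_{t\by}=\Id+t\ps_{\by}$ is the discrete analogue of the paper's proof, which realizes the isotopy as the flow of a time-dependent Lipschitz velocity field. But your general case has a genuine gap at ``each step is a small perturbation handled by the small-deformation case''. That case rests on $\mathsf E$, which extends off $\Ghat$. Step $k$ must instead move the intermediate surface $\Gamma_{t_{k-1}\by}$. So you need $g_k:\R^3\to\R^3$ with $g_k(\rr_{t_{k-1}\by}(\xhat))=h\,\ps_{\by}(\xhat)$, $h=t_k-t_{k-1}$, and $\operatorname{Lip}(g_k)\le Ch$, with $C$ independent of $\by$, of $k$ and of the maps already built. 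Nothing in the proposal supplies this. The obvious choice is to transport $\mathsf E$ through the current map: $g_k:=\mathsf E(h\ps_{\by})\circ(\mathcal T^{(k-1)})^{-1}$ with $\mathcal T^{(k-1)}:=\mathcal S_{k-1}\circ\dots\circ\mathcal S_1$. This telescopes to $\mathcal T^{(k)}=\Id+kh\,\mathsf E\ps_{\by}$, which is exactly your naive candidate with its smallness restriction. Subdivision therefore gains nothing unless the extension at each step is intrinsic to $\Gamma_{t_{k-1}\by}$. The collar paragraph does not close the gap either: the interior filling, which you flag as delicate, is never constructed.

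So the obstacle you single out is not the real one. Since $t\mapsto\rr_{t\by}$ is affine with $\norm{\ps_{\by}}_{W^{1,\infty}}\le\sum_jb_j$, a fixed number of steps works once the per-step extension constant is uniform. Equicontinuity and compactness of $\U$ are then unnecessary, and they do not by themselves produce that constant.

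The paper obtains the constant from \cref{ass:param}:
\begin{enumerate}
\item It proves a tubular neighbourhood of radius $\delta_{\rm tub}$ with bi-Lipschitz normal coordinates for every $\Gamma_{t\by}$, uniformly in $(\by,t)$. Near pairs are handled through local $C^{1,1}$ graphs, far pairs through the co-Lipschitz bound in \eqref{eq:bilip}.
\item It extends the velocity $\ps_{\by}\circ\rr_{t\by}^{-1}$ constantly along normals, with a cutoff.
\item It integrates this field, using Carath\'eodory, Gr\"onwall, and the degree for the sign of the Jacobian.
\end{enumerate}

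Within your discrete scheme a shorter repair is Kirszbraun's theorem. By \eqref{eq:distcomp} and \eqref{eq:bilip} at $t\by\in\U$, the prescribed data $\rr_{t_{k-1}\by}(\xhat)\mapsto h\,\ps_{\by}(\xhat)$ is Lipschitz on $\Gamma_{t_{k-1}\by}$ with constant at most $h\,c_{\Ghat}c_{\rm lip}^{-1}\sum_jb_j$, so it extends to $\R^3$ with the same constant. Multiply by a fixed cutoff equal to $1$ near all $\Gamma_{t\by}$ and supported in $B$; this keeps $\operatorname{Lip}(g_k)\le Ch$. A fixed $N\ge2C$ then gives $\mathcal T_{\by}=\mathcal S_N\circ\dots\circ\mathcal S_1$ with $\mathcal S_k=\Id+g_k$. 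By induction $\mathcal T_{\by}|_{\Ghat}=\rr_{\by}$. The Lipschitz constants of $\mathcal T_{\by}$ and its inverse are $(3/2)^N$ and $2^N$, and $\det D\mathcal T_{\by}\ge8^{-N}$ a.e., all uniformly in $\by$.

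Finally, falling back to the smallness regime would not prove the lemma, because \cref{ass:param} imposes no smallness on $C_E\sum_jb_j$.
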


The point is not ambient extendability itself, which is routine for a
single deformation, but uniform quantitative control over the whole
family. The realization is global and orientation preserving for every
$\by$, with $C_{\mathcal T}$, $c_{\mathcal T}$ and $C_E$ independent of
$\by$. That is what the trace--Newton construction requires. No
continuity of $\by\mapsto\mathcal T_{\by}$ is asserted or needed. The
construction is given in full in \cref{app:ambient}.

\begin{proof}
The construction is quantitative, and uniformity in $\by$ requires
tracking the geometric constants; the proof is given in
\cref{app:ambient}.
\end{proof}

\subsubsection{Complex perturbation of the ambient realization}

Fix $\by\in\U$ and let $\bz\in\C^{\N}$ satisfy
$\sum_{j\ge1}b_j\abs{z_j-y_j}<\delta$. Define
\begin{equation}\label{eq:ambient-complex}
\mathcal T_{\by,\bz}
:=
\mathcal T_{\by}+\mathsf E\big(\rr_{\bz}-\rr_{\by}\big) ,
\end{equation}
so that $\mathcal T_{\by,\bz}|_{\Ghat}=\rr_{\bz}$ and
$\norm{D\mathcal T_{\by,\bz}-D\mathcal T_{\by}}_{L^\infty}
\le C_E\sum_{j\ge1}b_j\abs{z_j-y_j}$.

The first index $\by$ records only the real ambient realization about
which the complex perturbation is built. The trace of
$\mathcal T_{\by,\bz}$ on $\Ghat$ is $\rr_{\bz}$, independently of that
auxiliary choice, and \cref{prop:newton} will show that the resulting
boundary operator $\mathsf W_{\by,\bz}$ is the intrinsic operator
$\Vop^0_{\bz}$ and hence does not depend on the centre $\by$. Here and
below $\Vop^0_{\bz}$ denotes the complex-deformed \emph{Laplace} single
layer, that is, the operator with kernel
$G_{0,\bz}=(4\pi)^{-1}q_{\bz}^{-1/2}$, which is \eqref{eq:Gz} at
$\kappa=0$; for densities rougher than $C^{0,1}(\Ghat)$ it is
understood by continuous extension, as made precise in
\cref{prop:newton}.

Introduce the matrix class
\begin{equation}\label{eq:Kclass}
\mathcal K
:=
\big\{M\in\R^{3\times3}:
\norm M\le C_{\mathcal T},\
\norm{M^{-1}}\le C_{\mathcal T},\
\det M\ge c_{\mathcal T}\big\} ,
\end{equation}
which is closed and bounded in a finite-dimensional space, hence
compact, and which contains $D\mathcal T_{\by}(x)$ for a.e.\ $x$ and
every $\by\in\U$ by \cref{lem:ambient}. We use $\mathcal K$ rather than
the family $\{D\mathcal T_{\by}(x)\}$ itself, whose compactness is
neither established nor needed: the construction of \cref{lem:ambient}
gives uniform bounds but no continuity of $\by\mapsto\mathcal T_{\by}$.
For $\delta$ small enough, uniformly in $\by$, every matrix
$D\mathcal T_{\by,\bz}(x)$ then lies in a fixed complex neighbourhood
of $\mathcal K$, hence is invertible a.e., and we may set
\begin{equation}\label{eq:Av}
\mathcal J_{\by,\bz}:=\det D\mathcal T_{\by,\bz},
\qquad
\mathcal A_{\by,\bz}
:=
\mathcal J_{\by,\bz}\,
D\mathcal T_{\by,\bz}^{-1}D\mathcal T_{\by,\bz}^{-\top} .
\end{equation}
This is the coefficient produced by the Dirichlet form under the change
of variables: for a real bi-Lipschitz $\mathcal T$ and
$u=v\circ\mathcal T^{-1}$,
\begin{equation}\label{eq:pullback-identity}
\int_{\R^3}\nabla u\cdot\overline{\nabla\phi}\,dx
=
\int_{\R^3}(\det D\mathcal T)\,
D\mathcal T^{-1}D\mathcal T^{-\top}\nabla v
\cdot\overline{\nabla\widehat\phi}\,d\widehat x ,
\qquad
\widehat\phi:=\phi\circ\mathcal T ,
\end{equation}
For real $\mathcal T$ the transformation rule is unchanged by the
conjugation used later in \eqref{eq:Lv}. Formula
\eqref{eq:pullback-identity} fixes the transpose order and the
determinant factor in \eqref{eq:Av}.

\begin{lemma}[Uniform complex ellipticity]\label{lem:volume-elliptic}
After reducing $\delta$ if necessary there are $c_{\rm v},C_{\rm v}>0$,
independent of $\by$ and $\bz$, with
$\norm{\mathcal A_{\by,\bz}}_{L^\infty(\R^3)}\le C_{\rm v}$ and
\begin{equation}\label{eq:volume-sectorial}
\Re\big(\mathcal A_{\by,\bz}(x)\,\xi\cdot\overline{\xi}\big)
\ge
c_{\rm v}\abs{\xi}^2,
\qquad \xi\in\C^3,
\end{equation}
for a.e.\ $x\in\R^3$, the bar denoting complex conjugation. Here
\eqref{eq:volume-sectorial} is the sesquilinear coercivity required by
Lax--Milgram: the unconjugated expression
$\mathcal A\xi\cdot\xi$ is not bounded below on $\C^3$, as
$\mathcal A=I$ and $\xi=\mathrm i\mathbf e_1$ already show. Moreover $\bz\mapsto\mathcal A_{\by,\bz}$ is
holomorphic, in every coordinate, on finite-dimensional sections, and
along admissible complex lines in the sense of
\cref{thm:scalar}\,\textup{(iii)}, with values in
$L^\infty(\R^3;\C^{3\times3})$.
\end{lemma}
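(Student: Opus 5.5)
The plan is a perturbation argument: first establish uniform sectoriality and boundedness at the real matrices $M\in\mathcal K$, then show that for $\delta$ small the complex perturbation $D\mathcal T_{\by,\bz}-D\mathcal T_{\by}$ moves $\mathcal A$ by a uniformly small amount in $L^\infty$. Holomorphy then follows because $\mathcal A$ is a rational function of the entries of $D\mathcal T_{\by,\bz}$, which depend affinely on $\bz$.

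\emph{Step 1 (real matrices).} For $M\in\mathcal K$ set $\Phi(M):=(\det M)M^{-1}M^{-\top}$. This matrix is real symmetric. For real $\eta$,
\[
\Phi(M)\eta\cdot\eta=\det M\,\abs{M^{-\top}\eta}^2\ge c_{\mathcal T}C_{\mathcal T}^{-2}\abs{\eta}^2 .
\]
For complex $\xi=\eta_1+\mathrm i\eta_2$, symmetry gives $\Re(\Phi\xi\cdot\overline\xi)=\Phi\eta_1\cdot\eta_1+\Phi\eta_2\cdot\eta_2$, so the same lower bound $2c_*\abs{\xi}^2$ holds with $c_*:=\tfrac12 c_{\mathcal T}C_{\mathcal T}^{-2}$. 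The upper bound is $\norm{\Phi(M)}\le \abs{\det M}\,C_{\mathcal T}^2\le C_{\mathcal T}^5$. Since $\det M\ge c_{\mathcal T}>0$ on $\mathcal K$, the function $\Phi$ is analytic on the open set $\{M\in\C^{3\times3}:\det M\neq0\}$, which contains $\mathcal K$.

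\emph{Step 2 (uniform neighbourhood).} Because $\mathcal K$ is compact, there is $r>0$ such that the closed $r$-neighbourhood $\mathcal K_r\subset\C^{3\times3}$ stays inside $\{\abs{\det}\ge c_{\mathcal T}/2\}$. On $\mathcal K_r$ the map $\Phi$ is uniformly Lipschitz with some constant $L$. Shrinking $r$, we may assume $Lr\le c_*$. Then for every $N\in\mathcal K_r$ and every nearest $M\in\mathcal K$,
\[
\Re(\Phi(N)\xi\cdot\overline\xi)\ge 2c_*\abs\xi^2-Lr\abs\xi^2\ge c_*\abs\xi^2,
\qquad
\norm{\Phi(N)}\le C_{\mathcal T}^5+c_* .
\]
Now choose $\delta$ with $C_E\delta<r$. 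The estimate after \eqref{eq:ambient-complex} then places $D\mathcal T_{\by,\bz}(x)\in\mathcal K_r$ for a.e.\ $x$, uniformly in $\by$, because $D\mathcal T_{\by}(x)\in\mathcal K$. This yields \eqref{eq:volume-sectorial} with $c_{\rm v}=c_*$ and $C_{\rm v}=C_{\mathcal T}^5+c_*$. Outside $B$ we have $\mathcal T_{\by,\bz}=\Id$, since $\mathsf E$ maps into $W^{1,\infty}_0(B)$, so $\mathcal A=I$ there, consistent with these bounds.

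\emph{Step 3 (holomorphy).} The map $\bz\mapsto D\mathcal T_{\by,\bz}=D\mathcal T_{\by}+\sum_j(z_j-y_j)D\mathsf E\ps_j$ is affine. The series converges in $L^\infty$ since $\norm{D\mathsf E\ps_j}_{L^\infty}\le C_Eb_j$. Along an admissible line $\tau\mapsto\bz+\tau\bw$ with $\sum_j b_j\abs{w_j}<\infty$ it is therefore an affine $L^\infty(\R^3;\C^{3\times3})$-valued map. Restricted to coordinates or to finite sections it is a polynomial.

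Composing with $\Phi$ is holomorphic as a map of $L^\infty$-valued functions with values in $\mathcal K_r$. To see this, write $\Phi(N)=\operatorname{adj}(N)\operatorname{adj}(N)^\top/\det N$, using $N^{-1}=\operatorname{adj}N/\det N$. The numerator is a polynomial in the entries of $N$. The factor $1/\det N$ is the composition of a pointwise polynomial with inversion in the Banach algebra $L^\infty$ on functions bounded below by $c_{\mathcal T}/2$, and that inversion is holomorphic. Hence $\tau\mapsto\mathcal A_{\by,\bz+\tau\bw}$ is holomorphic in $L^\infty$ on any open disc where $\sum b_j\abs{z_j+\tau w_j-y_j}<\delta$.

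The only delicate point is Step 2. Uniformity must come from compactness of the fixed class $\mathcal K$, not from any continuity of $\by\mapsto\mathcal T_{\by}$. The perturbation is controlled solely through $C_E\sum b_j\abs{z_j-y_j}$.
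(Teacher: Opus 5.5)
Your proof is correct and follows essentially the paper's route. You use uniform positive definiteness of the real symmetric matrix $(\det M)M^{-1}M^{-\top}$ on the compact class $\mathcal K$, a uniform Lipschitz bound for this map on a complex neighbourhood of $\mathcal K$ combined with the perturbation estimate $C_E\sum_j b_j\abs{z_j-y_j}$, and holomorphy from the affine dependence of $D\mathcal T_{\by,\bz}$ on $\bz$. The differences are cosmetic: you spell out the real-part splitting for complex $\xi$, and you handle the inverse via the adjugate and scalar inversion of $\det$ in $L^\infty$ rather than the paper's matrix Neumann series.
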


\begin{proof}
For $\bz=\by$ the matrix
$\mathcal A_{\by,\by}=(\det D\mathcal T_{\by})
D\mathcal T_{\by}^{-1}D\mathcal T_{\by}^{-\top}$ is real symmetric with
eigenvalues at least
$c_{\mathcal T}C_{\mathcal T}^{-2}$, hence uniformly positive definite.
Write $c_*:=c_{\mathcal T}C_{\mathcal T}^{-2}$, so that
$\mathcal A_{\by,\by}\,\xi\cdot\overline{\xi}\ge c_*\abs\xi^2$ for
$\xi\in\C^3$, the matrix $\mathcal A_{\by,\by}$ being real symmetric. The map $M\mapsto(\det M)M^{-1}M^{-\top}$ is analytic on
$GL(3,\C)$ and Lipschitz on a fixed complex neighbourhood of the
compact class $\mathcal K$ of \eqref{eq:Kclass}, so by
\eqref{eq:ambient-complex}
\[
\norm{\mathcal A_{\by,\bz}-\mathcal A_{\by,\by}}_{L^\infty}
\le
C\sum_{j\ge1}b_j\abs{z_j-y_j} .
\]
Hence
\[
\Re\big(\mathcal A_{\by,\bz}\xi\cdot\overline\xi\big)
=
\mathcal A_{\by,\by}\xi\cdot\overline\xi
+\Re\big((\mathcal A_{\by,\bz}-\mathcal A_{\by,\by})\xi\cdot\overline\xi\big)
\ge
\Big(c_*-C\sum_{j\ge1}b_j\abs{z_j-y_j}\Big)\abs\xi^2 ,
\]
and \eqref{eq:volume-sectorial} follows with $c_{\rm v}:=c_*/2$ upon
taking $\delta<c_*/(2C)$. Since \eqref{eq:ambient-complex} is
affine in $\bz$, the map $\bz\mapsto D\mathcal T_{\by,\bz}$ is
$L^\infty$-valued holomorphic; the determinant is polynomial and the
inverse is given by a locally uniformly convergent Neumann series.

For the directional statement, let $\bw$ satisfy
$\sum_jb_j\abs{w_j}<\infty$ and put
$\ps_{\bw}:=\sum_{j\ge1}w_j\ps_j$, a series converging absolutely in
$W^{2,\infty}(\Ghat;\C^3)$. By \eqref{eq:ambient-complex},
\begin{equation}\label{eq:ambient-line}
\mathcal T_{\by,\bz+\tau\bw}
=
\mathcal T_{\by,\bz}+\tau\,\mathsf E\ps_{\bw} ,
\end{equation}
so $\tau\mapsto D\mathcal T_{\by,\bz+\tau\bw}$ is affine with values in
$L^\infty$, and the same three observations give holomorphy of
$\tau\mapsto\mathcal A_{\by,\bz+\tau\bw}$ on every open disc of
admissible $\tau$, with infinitely many nonzero components of $\bw$
permitted.
\end{proof}

\subsubsection{The Laplace singular part}

Write $G_0(x)=1/(4\pi\abs x)$ and, with $q_{\bz}$ as in
\eqref{eq:qz} and the principal branch of \cref{lem:complex-separation},
\begin{equation}\label{eq:G0z}
G_{0,\bz}(\xhat,\yhat)
:=
\frac1{4\pi\sqrt{q_{\bz}(\xhat,\yhat)}} ,
\end{equation}
which for real $\bz=\by$ is $G_0(\rr_{\by}(\xhat)-\rr_{\by}(\yhat))$.

Let $\dot H^1(\R^3)$ be the completion of $C^\infty_c(\R^3)$ under
$\norm{\nabla\cdot}_{L^2(\R^3)}$, a space carrying no $L^2(\R^3)$ norm.
We use the reference domain $\widehat D$ itself, whose boundary is
exactly $\Ghat$. By the Sobolev embedding
$\dot H^1(\R^3)\hookrightarrow L^6(\R^3)$, valid in dimension three,
\[
\norm u_{L^2(\widehat D)}
\le
\abs{\widehat D}^{1/3}\norm u_{L^6(\widehat D)}
\le
C\norm{\nabla u}_{L^2(\R^3)} ,
\]
so $u|_{\widehat D}\in H^1(\widehat D)$ with
$\norm u_{H^1(\widehat D)}\le C\norm{\nabla u}_{L^2(\R^3)}$. Since
$\partial\widehat D=\Ghat$ is Lipschitz, the classical trace theorem on $\widehat D$ (cf.~Chap.~3 of Ref.~\refcite{McLean}) gives a bounded
$\widehat\gamma:\dot H^1(\R^3)\to H^{1/2}(\Ghat)$. A trace theorem on an arbitrary bounded set
containing $\Ghat$ would only provide a trace on the boundary of that
set, not on the interior hypersurface $\Ghat$.

Let $\dot H^{-1}(\R^3)$ denote the \emph{antidual} of $\dot H^1(\R^3)$
and define $\widehat\gamma^*:H^{-1/2}(\Ghat)\to\dot H^{-1}(\R^3)$
explicitly by
\begin{equation}\label{eq:gammastar}
\pair{\widehat\gamma^*\phi}{v}_{\dot H^{-1},\dot H^1}
:=
\pair{\phi}{\overline{\widehat\gamma v}}_{H^{-1/2},H^{1/2}} ,
\end{equation}
the pairing on the right being the complex-bilinear one used
throughout; for $\phi\in C^{0,1}(\Ghat)$ this reads
$\pair{\widehat\gamma^*\phi}{v}
=\int_{\Ghat}\phi\,\overline{\widehat\gamma v}\ds$. With this
convention $\phi\mapsto\widehat\gamma^*\phi$ is complex \emph{linear},
while $v\mapsto\pair{\widehat\gamma^*\phi}{v}$ is antilinear, as an
element of the antidual must be. Define
$\mathsf L_{\by,\bz}:\dot H^1(\R^3)\to\dot H^{-1}(\R^3)$ by
\begin{equation}\label{eq:Lv}
\pair{\mathsf L_{\by,\bz}u}{v}
:=
\int_{\R^3}\mathcal A_{\by,\bz}\nabla u\cdot\overline{\nabla v}\,dx .
\end{equation}
\begin{remark}[Bilinear boundary duality versus the Hilbert antidual]
\label{rem:sesqui}
Two dualities occur, for different purposes. The Maxwell boundary
formulation uses complex-bilinear duality throughout, as fixed in
\cref{rem:bilinear}; the auxiliary volume problem \eqref{eq:Lv} is
instead posed in the Hilbert antidual, so its test variable is
conjugated and the map
$\widehat\gamma^*:H^{-1/2}(\Ghat)\to\dot H^{-1}(\R^3)$ of
\eqref{eq:gammastar} is complex linear in the density and antilinear in
the test variable, as an adjoint into an antidual must be. It is not to
be confused with the complex-bilinear boundary duality. All parameter
dependence sits in the coefficient $\mathcal A_{\by,\bz}$, which is
never conjugated; consequently $\bz\mapsto\mathsf L_{\by,\bz}^{-1}$ is
a holomorphic operator-valued map and
$\phi\mapsto\mathsf W_{\by,\bz}\phi$ is complex linear.
\Cref{prop:newton} identifies this linear boundary operator with the
complex-deformed single-layer kernel used in the bilinear boundary
formulation, so the objects produced in \eqref{eq:newton-kernel} are
those used everywhere else.
\end{remark}

\begin{proposition}[Pulled-back Newton realization]
\label{prop:newton}
There is $\delta_N>0$ such that, uniformly for $\by\in\U$ and
$\sum_{j\ge1}b_j\abs{z_j-y_j}<\delta_N$, the operator
$\mathsf L_{\by,\bz}$ is an isomorphism
$\dot H^1(\R^3)\to\dot H^{-1}(\R^3)$ with
$\norm{\mathsf L_{\by,\bz}^{-1}}\le C_N$, the operator
\begin{equation}\label{eq:Wnewton}
\mathsf W_{\by,\bz}
:=
\widehat\gamma\,\mathsf L_{\by,\bz}^{-1}\,\widehat\gamma^*
\in\cL\big(H^{-1/2}(\Ghat),H^{1/2}(\Ghat)\big)
\end{equation}
is uniformly bounded, and is holomorphic in $\bz$ in every coordinate,
on finite-dimensional sections, and along admissible complex lines.
Moreover,
\begin{equation}\label{eq:newton-kernel}
(\mathsf W_{\by,\bz}\phi)(\xhat)
=
\int_{\Ghat}G_{0,\bz}(\xhat,\yhat)\phi(\yhat)\ds_{\yhat}
\qquad\text{in }H^{1/2}(\Ghat).
\end{equation}
\end{proposition}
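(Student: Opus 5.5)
The proof has three parts: isomorphism and uniform bounds, holomorphy, and the kernel identification \eqref{eq:newton-kernel}. The first two are soft; the third is the substantive step.

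\textbf{Isomorphism, bounds, holomorphy.} Take $\delta_N\le\delta$ from \cref{lem:volume-elliptic}. Then \eqref{eq:volume-sectorial} gives $\Re\pair{\mathsf L_{\by,\bz}u}{u}\ge c_{\rm v}\norm{\nabla u}_{L^2}^2$, and $\norm{\mathcal A_{\by,\bz}}_{L^\infty}\le C_{\rm v}$ gives boundedness. Lax--Milgram on the Hilbert space $\dot H^1(\R^3)$ then yields the isomorphism with $\norm{\mathsf L_{\by,\bz}^{-1}}\le c_{\rm v}^{-1}=:C_N$. Boundedness of $\widehat\gamma$ was shown before the statement. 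The adjoint $\widehat\gamma^*$ has the same norm by \eqref{eq:gammastar}. Hence $\norm{\mathsf W_{\by,\bz}}\le\norm{\widehat\gamma}^2C_N$, uniformly in $\by$ and $\bz$.

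For holomorphy, the map $\bz\mapsto\mathsf L_{\by,\bz}\in\cL(\dot H^1,\dot H^{-1})$ is a bounded linear image of $\bz\mapsto\mathcal A_{\by,\bz}\in L^\infty$. Its norm is at most the $L^\infty$ norm, and the coefficient is never conjugated. So it inherits each of the three holomorphy modes from \cref{lem:volume-elliptic}. Inversion is holomorphic on the open set of isomorphisms (Neumann series), so $\bz\mapsto\mathsf L_{\by,\bz}^{-1}$ is holomorphic in the same senses. Composing with the fixed operators $\widehat\gamma$ and $\widehat\gamma^*$ preserves this.

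\textbf{Kernel identity at real parameters.} Take $\bz=\by'$ real with $\sum_j b_j\abs{y'_j-y_j}<\delta_N$. The real map $\mathcal T:=\mathcal T_{\by,\by'}$ has $D\mathcal T$ near $\mathcal K$, so $\det D\mathcal T>0$. I expect $\mathcal T$ to be a bi-Lipschitz homeomorphism of $\R^3$: it equals $\Id$ outside $B$, and it is a small $W^{1,\infty}$ perturbation of the bi-Lipschitz map $\mathcal T_{\by}$. Global injectivity should follow from a quantitative bi-Lipschitz perturbation argument, shrinking $\delta_N$ if needed.

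Fix $\phi\in C^{0,1}(\Ghat)$ and put $\Phi:=(\phi J^{-1})\circ\mathcal T^{-1}$ on $\Gamma':=\mathcal T(\Ghat)=\rr_{\by'}(\Ghat)$, where $J$ is the surface Jacobian. The physical single-layer potential $u=\int_{\Gamma'}G_0(\cdot-y)\Phi(y)\ds_y$ is the unique $\dot H^1$ solution of $-\Delta u=\Phi\,\delta_{\Gamma'}$. Pulling back with \eqref{eq:pullback-identity}, and using $\int_{\Gamma'}\Phi\overline{\psi}\ds=\int_{\Ghat}\phi\,\overline{\psi\circ\mathcal T}\ds$ for the source, shows that $v:=u\circ\mathcal T$ solves $\mathsf L_{\by,\by'}v=\widehat\gamma^*\phi$. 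Its trace is $\widehat\gamma v(\xhat)=\int_{\Ghat}G_0(\rr_{\by'}(\xhat)-\rr_{\by'}(\yhat))\phi(\yhat)\ds_{\yhat}$, which is \eqref{eq:newton-kernel} at $\by'$.

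\textbf{Analytic continuation to complex $\bz$.} This is the main obstacle, since the two sides are built differently. Fix Lipschitz $\phi$ and a test function $\psi\in C^{0,1}(\Ghat)$. Restrict to a finite-dimensional complex section through $\by$, or to a complex line in an admissible direction. Both $\bz\mapsto\pair{\mathsf W_{\by,\bz}\phi}{\psi}$ and
\[
\bz\mapsto\iint_{\Ghat\times\Ghat}G_{0,\bz}(\xhat,\yhat)\,\phi(\yhat)\,\psi(\xhat)\ds_{\yhat}\ds_{\xhat}
\]
are holomorphic on this connected polydisc or disc. The first is holomorphic by the holomorphy step above. The second is holomorphic by \cref{lem:kernel}: the integrable majorant $C\abs{\xhat-\yhat}^{-1}$ is uniform in $\bz$, so Morera and Fubini apply.

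The two functions agree on the real slice by the previous step. A holomorphic function of several variables vanishing on a totally real open set of full dimension vanishes identically, so they agree on the whole section. The coordinate and line statements are handled the same way, and arbitrary $\bz$ is reached through finite sections plus a limit in the $\ell^1_{\bb}$ sense. The limit uses uniform boundedness of both sides together with dominated convergence.

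Density of Lipschitz $\phi$ in $H^{-1/2}$ then extends the identity and defines $\Vop^0_{\bz}$ by continuity, independently of the centre $\by$.
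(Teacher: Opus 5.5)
Your proposal is correct and follows the paper's proof essentially step for step: Lax--Milgram and holomorphy of inversion for $\mathsf L_{\by,\bz}$; identification at real parameters by pulling back the physical Laplace single-layer potential with density $J^{-1}\phi$, so that the transformed source is exactly $\widehat\gamma^*\phi$; the identity theorem on finite-dimensional sections through $\by$; truncation $\bz^{(N)}\to\bz$; and density of Lipschitz densities. Two points need to be made explicit: on the $\mathsf W$ side of the truncation limit, uniform boundedness alone gives no convergence, so use, as the paper does, $\norm{\mathcal A_{\by,\bz^{(N)}}-\mathcal A_{\by,\bz}}_{L^\infty}\le C\sum_{j>N}b_j\abs{z_j-y_j}$ together with the resolvent identity (or Cauchy estimates along the admissible line through $\bz$ in the direction $\bz-\bz^{(N)}$); and take $\delta_N\le\eps_0$, so that the kernel side is defined and holomorphic on the whole section.
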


\begin{proof}
By \eqref{eq:volume-sectorial},
$\Re\pair{\mathsf L_{\by,\bz}u}{u}\ge c_{\rm v}\norm u_{\dot H^1}^2$,
so Lax--Milgram gives the inverse with a uniform bound. Since
$\bz\mapsto\mathsf L_{\by,\bz}$ is operator-norm holomorphic by
\cref{lem:volume-elliptic} and inversion is holomorphic on the open set
of isomorphisms, $\bz\mapsto\mathsf L_{\by,\bz}^{-1}$ is holomorphic;
composing with the fixed bounded trace maps gives the statements about
$\mathsf W_{\by,\bz}$. The same reasoning applies along an
admissible complex line: by \eqref{eq:ambient-line} the map
$\tau\mapsto\mathcal A_{\by,\bz+\tau\bw}$ is holomorphic, hence so are
$\tau\mapsto\mathsf L_{\by,\bz+\tau\bw}^{-1}$, by uniform sectoriality
on the disc and holomorphy of inversion, and
$\tau\mapsto\mathsf W_{\by,\bz+\tau\bw}$ in
$\cL(H^{-1/2}(\Ghat),H^{1/2}(\Ghat))$.

For \eqref{eq:newton-kernel} we argue first at real parameters. Let $\widetilde\by\in\U$ satisfy
$C_E\sum_jb_j\abs{\widetilde y_j-y_j}<\tfrac12C_{\mathcal T}^{-1}$.
Then $\mathcal T_{\by,\widetilde\by}$ is a real orientation-preserving
bi-Lipschitz map. It need not coincide with the realization
$\mathcal T_{\widetilde\by}$ that \cref{lem:ambient} supplies at
$\widetilde\by$. This is immaterial. The change-of-variables identity
below requires only that $\mathcal T_{\by,\widetilde\by}$ be a real
bi-Lipschitz ambient map whose boundary trace is
$\rr_{\widetilde\by}$. It is real because $\widetilde\by\in\U$. Moreover
$\mathcal T_{\by}$ is co-Lipschitz, with
$\abs{\mathcal T_{\by}(x)-\mathcal T_{\by}(x')}
\ge C_{\mathcal T}^{-1}\abs{x-x'}$, so
\[
\abs{\mathcal T_{\by,\widetilde\by}(x)-\mathcal T_{\by,\widetilde\by}(x')}
\ge
\Big(C_{\mathcal T}^{-1}
-\operatorname{Lip}\mathsf E(\rr_{\widetilde\by}-\rr_{\by})\Big)
\abs{x-x'}
\ge
\tfrac12C_{\mathcal T}^{-1}\abs{x-x'} ,
\]
which gives global injectivity and the lower bound. Moreover the
determinant is uniformly Lipschitz on a fixed neighbourhood of
$\mathcal K$, so after decreasing $\delta_N$ if necessary
\[
\det D\mathcal T_{\by,\widetilde\by}\ge\tfrac12c_{\mathcal T}
\qquad\text{a.e.},
\]
and $\mathcal T_{\by,\widetilde\by}$ is orientation preserving. Let
$\phi\in C^{0,1}(\Ghat)$, and define the physical
density $\varrho_{\widetilde\by}$ on
$\Gamma_{\widetilde\by}=\rr_{\widetilde\by}(\Ghat)$ by
\begin{equation}\label{eq:phys-density}
\varrho_{\widetilde\by}\circ\rr_{\widetilde\by}
:=
J_{\widetilde\by}^{-1}\phi .
\end{equation}
Let $U_{\widetilde\by}(x)=\int_{\Gamma_{\widetilde\by}}
G_0(x-x')\varrho_{\widetilde\by}(x')\ds_{x'}$ be the corresponding
Laplace single-layer potential, which lies in $\dot H^1(\R^3)$ and
satisfies $-\Delta U_{\widetilde\by}
=\gamma_{\widetilde\by}^*\varrho_{\widetilde\by}$ distributionally. Put
$\widehat U:=U_{\widetilde\by}\circ\mathcal T_{\by,\widetilde\by}$.
Changing variables in the weak formulation gives, for
$v\in\dot H^1(\R^3)$,
\[
\int_{\R^3}\mathcal A_{\by,\widetilde\by}\nabla\widehat U
\cdot\overline{\nabla v}\,dx
=
\int_{\Gamma_{\widetilde\by}}\varrho_{\widetilde\by}\,
\overline{\gamma_{\widetilde\by}
\big(v\circ\mathcal T_{\by,\widetilde\by}^{-1}\big)}\ds .
\]
The transformed distributional source is parameter independent. Indeed,
the surface change of variables
contributes $J_{\widetilde\by}$, which by \eqref{eq:phys-density}
cancels exactly against the factor $J_{\widetilde\by}^{-1}$ carried by
the transported density, leaving
\begin{equation}\label{eq:source-cancel}
\int_{\Gamma_{\widetilde\by}}\varrho_{\widetilde\by}\,
\overline{\gamma_{\widetilde\by}
\big(v\circ\mathcal T_{\by,\widetilde\by}^{-1}\big)}\ds
=
\int_{\Ghat}\phi\,\overline{\widehat\gamma v}\ds ,
\end{equation}
a source independent of the parameter. Hence
$\mathsf L_{\by,\widetilde\by}\widehat U=\widehat\gamma^*\phi$ and, by
uniqueness, $\widehat U=\mathsf L_{\by,\widetilde\by}^{-1}
\widehat\gamma^*\phi$. Taking the trace and using
$\mathcal T_{\by,\widetilde\by}|_{\Ghat}=\rr_{\widetilde\by}$ together
with \eqref{eq:phys-density} once more,
\[
(\widehat\gamma\widehat U)(\xhat)
=
\int_{\Gamma_{\widetilde\by}}
G_0\big(\rr_{\widetilde\by}(\xhat)-x'\big)
\varrho_{\widetilde\by}(x')\ds_{x'}
=
\int_{\Ghat}
G_0\big(\rr_{\widetilde\by}(\xhat)-\rr_{\widetilde\by}(\yhat)\big)
\phi(\yhat)\ds_{\yhat} ,
\]
which is \eqref{eq:newton-kernel} at real parameters.

We pass to complex parameters in two stages. Throughout, fix
$\phi,\psi\in C^{0,1}(\Ghat)$ and note that, by \eqref{eq:qpositive},
\begin{equation}\label{eq:G0-majorant}
\abs{G_{0,\bz}(\xhat,\yhat)}
=
\frac1{4\pi\abs{q_{\bz}(\xhat,\yhat)}^{1/2}}
\le
\frac1{4\pi\sqrt{c_0}}\,\frac1{\abs{\xhat-\yhat}} ,
\end{equation}
since $\abs{q_{\bz}}\ge\Re q_{\bz}\ge c_0\abs{\xhat-\yhat}^2$; this,
and not \eqref{eq:kernelbound}, is the majorant appropriate to
$G_{0,\bz}$.

\emph{Finitely supported parameters.} Suppose first that
$\bz-\by$ has finite support $S$. Restrict to the finite-dimensional
section in which $z_j=y_j$ for every $j\notin S$ and only the
coordinates in $S$ vary. For each $j\in S$, choose a nonempty open interval
$I_j\subset[-1,1]\cap(y_j-\eta_j,y_j+\eta_j)$, with $\eta_j$ small
enough that the resulting real parameters satisfy the smallness
condition of the previous paragraph. If $y_j=\pm1$, take a one-sided interval lying arbitrarily close to the
endpoint, $I_j=(1-\eta_j,1)$ or $I_j=(-1,-1+\eta_j)$. The endpoint
itself plays no role: the argument uses only that $I_j$ is a nonempty
open subinterval of $[-1,1]$.

The box $I_S:=\prod_{j\in S}I_j$ is a nonempty open real box contained
in $\U$ and in the neighbourhood above, where equality has already been
proved, and it has accumulation points in the complex section. The two
functions
\[
\bz\longmapsto\pair{\mathsf W_{\by,\bz}\phi}{\psi},
\qquad
\bz\longmapsto\iint G_{0,\bz}\phi\psi ,
\]
are both holomorphic on the complex section under consideration. The
first is holomorphic by the earlier part of the proof. The second is
holomorphic by \cref{lem:complex-separation} and dominated convergence,
with \eqref{eq:G0-majorant} as majorant. They agree on the real box
$I_S$.
Reducing $\delta_N$ if necessary below the threshold $\eps_0$ of
\cref{lem:complex-separation}, every parameter in the section satisfies
the sectorial estimate \eqref{eq:qpositive} for $q_{\bz}$; the
principal branch defining $G_{0,\bz}$ is therefore holomorphic
throughout the connected domain introduced next, and the continuation
path never leaves the region where that branch is defined.
Introduce the connected finite-dimensional domain
\begin{equation}\label{eq:DS}
D_S:=\Big\{\bm\zeta_S\in\C^S:
\sum_{j\in S}b_j\abs{\zeta_j-y_j}<\delta_N\Big\} ,
\end{equation}
which contains both $I_S$ and the prescribed point $\bz_S$, the latter
strictly inside by hypothesis. Shrink $I_S$ if necessary so that some
complex polydisc neighbourhood of it lies in $D_S$. Successive
application of the one-variable identity theorem, one active coordinate
at a time along that polydisc, gives equality on a nonempty open subset
of $D_S$. The difference of the two sides is holomorphic on the
connected domain $D_S$, so the identity theorem gives equality
throughout $D_S$, in particular at $\bz_S$. The section must pass
through $\by$, since freezing a tail coordinate at a nonreal value
would leave no real box from which to initiate the identity-theorem
argument.

\emph{General parameters.} We now pass to parameters with infinitely
many active coordinates. Three convergences occur: in
$W^{2,\infty}(\Ghat;\C^3)$ for the deformations, in
$L^\infty(\R^3;\C^{3\times3})$ for the coefficients, and in
$\cL(H^{-1/2}(\Ghat),H^{1/2}(\Ghat))$ for the resulting boundary
operators, the last being the topology in which \cref{thm:scalar} is
stated. Let $\bz$ be arbitrary with
$\sum_jb_j\abs{z_j-y_j}<\delta_N$, and truncate,
\[
\bz^{(N)}:=\by+(z_1-y_1,\dots,z_N-y_N,0,0,\dots) ,
\]
so that $\bz^{(N)}-\by$ has finite support and the previous stage
applies. By \eqref{eq:ambient-complex} and \cref{lem:ambient},
\[
\norm{D\mathcal T_{\by,\bz^{(N)}}-D\mathcal T_{\by,\bz}}_{L^\infty}
\le
C_E\sum_{j>N}b_j\abs{z_j-y_j}
\xrightarrow[N\to\infty]{}0 ,
\]
the tail of a convergent series; equivalently
$\rr_{\bz^{(N)}}\to\rr_{\bz}$ in $W^{2,\infty}(\Ghat;\C^3)$, because
$\sum_jb_j\abs{z_j-y_j}<\infty$. Hence
$\mathcal A_{\by,\bz^{(N)}}\to\mathcal A_{\by,\bz}$ in
$L^\infty(\R^3;\C^{3\times3})$ by \cref{lem:volume-elliptic}. The
resolvent identity and the uniform bound on
$\mathsf L_{\by,\cdot}^{-1}$ then give
$\mathsf L_{\by,\bz^{(N)}}^{-1}\to\mathsf L_{\by,\bz}^{-1}$ in
$\cL(\dot H^{-1},\dot H^1)$. Composing with the fixed trace maps,
\[
\mathsf W_{\by,\bz^{(N)}}\longrightarrow\mathsf W_{\by,\bz}
\qquad\text{in }\cL\big(H^{-1/2}(\Ghat),H^{1/2}(\Ghat)\big).
\]
This convergence is in the operator topology of \cref{thm:scalar} and
uses the uniform mapping estimate already established, not any property
of the kernel, so the identity survives the limit in that topology.
On the kernel side $\rr_{\bz^{(N)}}\to\rr_{\bz}$ uniformly, so
$G_{0,\bz^{(N)}}\to G_{0,\bz}$ pointwise off the diagonal, and
\eqref{eq:G0-majorant} holds uniformly in $N$; dominated convergence
passes the pairing to the limit. Dominated convergence is used only for
that kernel pairing at regular densities; the $H^{-1/2}\to H^{1/2}$
operator bound and the operator-norm convergence come from the
trace--Newton realization. Equality therefore persists for
arbitrary, in general infinitely supported, $\bz$.

\emph{General densities.} For $\phi\in C^{0,1}(\Ghat)$ the right-hand
side of \eqref{eq:newton-kernel} is a classically defined integral, and
the identity just proved shows that the associated integral operator
admits the bounded extension $\mathsf W_{\by,\bz}$ of
\eqref{eq:Wnewton}. By uniqueness of continuous extension from the
dense subspace $C^{0,1}(\Ghat)\subset H^{-1/2}(\Ghat)$, that extension
\emph{is} $\Vop^0_{\bz}$, and \eqref{eq:newton-kernel} is to be read in
this sense for general $\phi\in H^{-1/2}(\Ghat)$.
\end{proof}

\begin{remark}[The fixed distributional source]\label{rem:newton-jacobian}
The choice of density in \eqref{eq:phys-density} is dictated by the
Piola-pulled scalar term. That term
of the Piola-pulled EFIE \eqref{eq:fixed-form} contains
$\int_{\Ghat}G_\kappa(\rr_{\by}(\xhat)-\rr_{\by}(\yhat))
\phi(\yhat)\ds_{\yhat}$ with \emph{no} factor $J_{\by}(\yhat)$. The
corresponding physical density is therefore $J_{\by}^{-1}\phi$. The
surface Jacobian carried by the physical distribution
$\gamma_{\by}^*\varrho_{\by}$ cancels on pullback, leaving the
parameter-independent source \eqref{eq:source-cancel}. This is the
trace-potential counterpart of \cref{prop:cancel}.
\end{remark}

\subsubsection{The Helmholtz correction and completion}

Split
\begin{equation}\label{eq:helm-split}
G_{\kappa,\bz}=G_{0,\bz}+N_{\kappa,\bz},
\qquad
N_{\kappa,\bz}(\xhat,\yhat)
:=h_\kappa\big(\sqrt{q_{\bz}(\xhat,\yhat)}\big),
\end{equation}
where $h_\kappa(s):=(e^{\mathrm i\kappa s}-1)/(4\pi s)$ for $s\ne0$ and
$h_\kappa(0):=\mathrm i\kappa/(4\pi)$ is entire. No radiating Helmholtz
inverse is needed: the singular part is governed by the coercive
operator \eqref{eq:Lv}, and the correction is regular at the diagonal.

\begin{lemma}[Regular Helmholtz correction]\label{lem:helm-corr}
There is $\eps_N>0$ such that on every $(\bb,\eps_N)$-admissible tube
the operator
$(\mathsf N_{\kappa,\bz}\phi)(\xhat):=\int_{\Ghat}
N_{\kappa,\bz}(\xhat,\yhat)\phi(\yhat)\ds_{\yhat}$
extends uniformly to
$\mathsf N_{\kappa,\bz}:H^{-1/2}(\Ghat)\to H^{1/2}(\Ghat)$ and depends
holomorphically on $\bz$ there, in every coordinate, on
finite-dimensional sections, and along admissible complex lines.
\end{lemma}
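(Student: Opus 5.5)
The plan is to show that the correction kernel $N_{\kappa,\bz}$ is regular enough that $\mathsf N_{\kappa,\bz}$ maps $H^{-1}(\Ghat)\to H^{1}(\Ghat)$ uniformly. The claimed $H^{-1/2}\to H^{1/2}$ bound then follows from the continuous embeddings $H^{-1/2}\hookrightarrow H^{-1}$ and $H^1\hookrightarrow H^{1/2}$.

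\emph{Step 1: pointwise bounds.} Write $h_\kappa(s)=\frac{\mathrm i\kappa}{4\pi}\sum_{m\ge0}(\mathrm i\kappa s)^m/(m+1)!$. From this series, $h_\kappa(\sqrt q)=\sum_m c_m q^{m/2}$ is holomorphic in $q$ on the right half-plane, and is bounded with bounded first $q$-derivative times $\sqrt q$. Lemma~\ref{lem:complex-separation} and the bounds in the proof of Lemma~\ref{lem:kernel} give $c_0|\xhat-\yhat|^2\le|q_{\bz}|\le C|\xhat-\yhat|^2$. Moreover $q_{\bz}$ is $C^{1,1}$ in each variable, with $|\nabla_{\xhat}q_{\bz}|\le C|\xhat-\yhat|$, because $F_{\bz}$ is uniformly bounded in $W^{1,\infty}$ by \eqref{eq:Fcomplex}. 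Combining these, I expect
\[
|N_{\kappa,\bz}|\le C,\qquad |\nabla_{\xhat}N_{\kappa,\bz}|+|\nabla_{\yhat}N_{\kappa,\bz}|\le C,\qquad |\nabla_{\xhat}\nabla_{\yhat}N_{\kappa,\bz}|\le C|\xhat-\yhat|^{-1},
\]
uniformly on admissible tubes, with the derivatives understood in charts. The mixed second derivative involves $D_{\Ghat}$ of $F_{\bz}$, which is only $L^\infty$; this is acceptable.

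\emph{Step 2: mapping property.} A kernel $K$ with $K,\nabla_{\xhat}K\in L^\infty$ defines an operator $L^2\to H^1$. To reach $H^{-1}\to H^1$ one needs $\phi\mapsto\int K\phi$ for $\phi=\divh$-type or gradient-type distributions. Concretely, for $\phi\in H^{-1}$ and $\psi\in H^{-1}$ it suffices to bound the bilinear pairing $\iint K\phi\psi$. This is controlled if $K\in H^1\otimes H^1$ on $\Ghat\times\Ghat$, i.e.\ $K$, $\nabla_{\xhat}K$, $\nabla_{\yhat}K$ and $\nabla_{\xhat}\nabla_{\yhat}K$ all lie in $L^2(\Ghat\times\Ghat)$. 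Step 1 gives exactly this, since $|\xhat-\yhat|^{-1}$ is square integrable on the two-dimensional surface, with the atlas of Assumption~\ref{ass:reference}. Indeed, $H^{-1}\otimes H^{-1}$ is the dual of the mixed-smoothness space, so the bound is $C\norm{\phi}_{H^{-1}}\norm{\psi}_{H^{-1}}$.

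\emph{Step 3: holomorphy.} Each of the four derivatives is holomorphic in $\bz$ pointwise off the diagonal, by the affine dependence of $\rr_{\bz}$ and holomorphy of $h_\kappa\circ\sqrt{\cdot}$ on the right half-plane. The uniform bounds of Step 1 act as integrable majorants. Dominated convergence together with Cauchy's formula then gives holomorphy of $\bz\mapsto N_{\kappa,\bz}$ into the mixed $H^1$ space on finite sections and along admissible lines, since $\tau\mapsto\rr_{\bz+\tau\bw}$ is affine into $W^{2,\infty}$. Continuity on lines follows as well, using locally uniform convergence of difference quotients justified by the same majorants. The map from kernel to operator is linear and bounded, so operator-norm holomorphy follows.

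\emph{Main obstacle.} I expect the mixed derivative bound in Step 1 to be the hardest part. Differentiating $\sqrt q$ twice produces $q^{-1/2}\nabla q$ terms; these must be handled via the series $h_\kappa(\sqrt q)=\frac{\mathrm i\kappa}{4\pi}+\text{(entire in }q)\cdot\sqrt q+\text{(entire in }q)$. The odd powers $q^{(2k+1)/2}$, $k\ge0$, are the only non-smooth contributions, and the worst one is $\sqrt q$ itself, whose mixed derivative is $O(|\xhat-\yhat|^{-1})$. That is still square integrable, so the argument should close. The admissibility threshold $\eps_N$ will be taken to be at most $\eps_0$, in line with Remark~\ref{rem:eps-star}.
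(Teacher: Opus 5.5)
\paragraph{Verdict: genuine gap.} Step~2 of your proposal rests on a false integrability claim, and your Step~3 depends on it.

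\paragraph{The failing step.} On the two-dimensional surface $\Ghat$, $\abs{\xhat-\yhat}^{-1}$ is \emph{not} square integrable. For fixed $\xhat$,
\[
\int_{\Ghat}\abs{\xhat-\yhat}^{-2}\ds_{\yhat}\gtrsim\int_0^{R}r^{-1}\,dr=\infty .
\]
Thus $\abs{\xhat-\yhat}^{-\alpha}\in L^2(\Ghat\times\Ghat)$ only for $\alpha<1$, and $\alpha=1$ is exactly the borderline that fails.

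This is not an artefact of a crude estimate: your mixed bound in Step~1 is sharp. The linear term $-\kappa^2\sqrt{q_{\bz}}/(8\pi)$ of $h_\kappa$, which you yourself single out as the worst, has a mixed derivative of exact order $\abs{\xhat-\yhat}^{-1}$. For a flat patch and real $\bz$, $\nabla_{\xhat}\nabla_{\yhat}\abs{\xhat-\yhat}$ is minus the Hessian of the distance, and its norm is exactly $\abs{\xhat-\yhat}^{-1}$.

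Consequently:
\begin{itemize}
\item $N_{\kappa,\bz}\notin H^1\otimes H^1$ in general, so the Hilbert--Schmidt-type route to $\mathsf N_{\kappa,\bz}:H^{-1}\to H^1$ is unavailable.
\item Step~3 (holomorphy into the mixed $H^1$ space by dominated convergence) loses its majorant for the same reason.
\end{itemize}
Incidentally, $\partial_{\xhat}\partial_{\yhat}q_{\bz}=-2F_{\bz}(\xhat)^\top F_{\bz}(\yhat)$ involves no derivative of $F_{\bz}$, contrary to your remark.

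\paragraph{The repair.} The missing idea is to stop at first derivatives and interpolate at the midpoint, which is what the paper does.
\begin{itemize}
\item The bounds $\abs{N_{\kappa,\bz}}+\abs{D_{\xhat}N_{\kappa,\bz}}+\abs{D_{\yhat}N_{\kappa,\bz}}\le C$, which you already derive in Step~1, give $\mathsf N_{\kappa,\bz}:L^2\to H^1$ by the Schur test.
\item The transposed kernel satisfies the same bounds, so duality gives $H^{-1}\to L^2$.
\item Complex interpolation then yields $H^{-1/2}\to H^{1/2}$ uniformly.
\end{itemize}
In your tensor-product language the same repair reads $N_{\kappa,\bz}\in(H^1\otimes L^2)\cap(L^2\otimes H^1)\hookrightarrow H^{1/2}\otimes H^{1/2}$. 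This pairs boundedly with $H^{-1/2}\otimes H^{-1/2}$ and needs no mixed derivative at all.

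Holomorphy then follows as in the paper. Dominated convergence with a bounded majorant gives weak holomorphy on Lipschitz densities. The uniform $H^{-1/2}\to H^{1/2}$ bound and Cauchy's formula upgrade this to operator-norm holomorphy, on finite-dimensional sections and along admissible complex lines alike.
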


\begin{proof}
By \cref{lem:complex-separation},
$\abs{\sqrt{q_{\bz}(\xhat,\yhat)}}\simeq\abs{\xhat-\yhat}$ uniformly on
the admissible set. Away from the diagonal
\[
D_{\xhat}\sqrt{q_{\bz}}
=
\frac{D_{\Ghat}\rr_{\bz}(\xhat)^\top
\big(\rr_{\bz}(\xhat)-\rr_{\bz}(\yhat)\big)}
{\sqrt{q_{\bz}(\xhat,\yhat)}} ,
\]
so \eqref{eq:Fcomplex} and complex separation give
$\abs{D_{\xhat}\sqrt{q_{\bz}}}+\abs{D_{\yhat}\sqrt{q_{\bz}}}\le C$
a.e., uniformly in $\bz$. Since $h_\kappa$ and $h_\kappa'$ are entire,
hence bounded on the bounded set of values taken by
$\sqrt{q_{\bz}}$,
\[
\abs{N_{\kappa,\bz}}
+\abs{D_{\xhat}N_{\kappa,\bz}}
+\abs{D_{\yhat}N_{\kappa,\bz}}\le C .
\]
The correction is nonsingular at the diagonal but not smooth across it:
$h_\kappa(s)=\mathrm i\kappa/(4\pi)-\kappa^2s/(8\pi)+O(s^2)$, and the
linear term carries $s=\sqrt{q_{\bz}}$, which retains distance-type
regularity in $(\xhat,\yhat)$. The displayed bounds are all the
argument uses.
The Schur test then gives $\mathsf N_{\kappa,\bz}:L^2\to H^1$
uniformly. The transposed kernel satisfies the same estimates, so
duality gives $\mathsf N_{\kappa,\bz}:H^{-1}\to L^2$, and complex
interpolation, using $[H^{-1},L^2]_{1/2}=H^{-1/2}$ and
$[L^2,H^1]_{1/2}=H^{1/2}$, yields $H^{-1/2}\to H^{1/2}$.

For holomorphy, dominated convergence gives weak holomorphy on the
dense class of Lipschitz densities, with \eqref{eq:kernelbound} as
majorant. The uniform operator bound just obtained upgrades this to
operator-norm holomorphy. For $\norm\phi\le1$ and $\norm\lambda\le1$
the scalar functions $\zeta\mapsto\lambda(\mathsf N_{\kappa,\cdot}\phi)$
are holomorphic and uniformly bounded on a fixed disc, so the Cauchy
formula bounds the second difference of their difference quotients
uniformly in $\phi$ and $\lambda$; taking suprema gives a Cauchy family
in $\cL(H^{-1/2},H^{1/2})$, which is complete. Along a complex
line the same applies, since
$\rr_{\bz+\tau\bw}=\rr_{\bz}+\tau\ps_{\bw}$ makes
$q_{\bz+\tau\bw}$ a polynomial in $\tau$, so
$N_{\kappa,\bz+\tau\bw}(\xhat,\yhat)$ is holomorphic in $\tau$ for
fixed $\xhat\ne\yhat$ and the uniform bound just proved supplies the
local boundedness.
\end{proof}

\begin{proposition}[Trace--Newton proof of \cref{thm:scalar}]
\label{prop:newton-proof}
The conclusions of \cref{thm:scalar} follow from
\cref{prop:newton,lem:helm-corr}.
\end{proposition}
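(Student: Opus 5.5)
The plan is to split $\Vop_{\bz}=\Vop^0_{\bz}+\mathsf N_{\kappa,\bz}$ according to \eqref{eq:helm-split}. Each piece is then controlled locally around real parameters, and these local statements are globalised over the admissible tube by a covering argument. \Cref{lem:helm-corr} already supplies uniform bounds and all three kinds of holomorphy for $\mathsf N_{\kappa,\bz}$ on $(\bb,\eps_N)$-admissible tubes. So only the Laplace part remains, and there the work is to transfer \cref{prop:newton} from its neighbourhood of a fixed centre $\by$ to the whole tube.

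I would take $\eps_1:=\min\{\eps_0,\eps_N,\delta_N/2\}$ and let $\brho$ be $(\bb,\eps_1)$-admissible. For $\bz\in\mathcal O_{\brho}$, the argument of \cref{lem:complex-separation} produces $\by\in\U$ with $\sum_jb_j\abs{z_j-y_j}\le\eps_1<\delta_N$. \Cref{prop:newton} then gives $\Vop^0_{\bz}=\mathsf W_{\by,\bz}$ as operators $H^{-1/2}(\Ghat)\to H^{1/2}(\Ghat)$. Two facts are used here. First, $\mathsf W_{\by,\bz}$ agrees with the intrinsic integral operator on the dense class $C^{0,1}(\Ghat)$. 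Second, it is bounded by $\norm{\widehat\gamma}^2C_N$, and this constant does not depend on $\by$ or $\bz$. Uniqueness of continuous extension therefore gives (i) for $\Vop^0_{\bz}$ with an extension that is independent of the centre. Adding the bound from \cref{lem:helm-corr} gives (i) and (ii) with $C_V:=\norm{\widehat\gamma}^2C_N+C_{\mathsf N}$, which is independent of $\brho$ and $\bz$.

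For (iii), holomorphy is local, so it suffices to check it near each point. Fix $\bz_0\in\mathcal O_{\brho}$ and the associated $\by$. The set $\{\bz:\sum_jb_j\abs{z_j-y_j}<\delta_N\}$ is a neighbourhood of $\bz_0$ in each coordinate direction. On it $\Vop^0_{\bz}=\mathsf W_{\by,\bz}$, which is holomorphic coordinatewise and on finite-dimensional sections by \cref{prop:newton}. Combining this with \cref{lem:helm-corr} proves the first two assertions of (iii).

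For a line $\tau\mapsto\bz+\tau\bw$ with $\bz+\tau\bw\in\mathcal O_{\brho}$ on an open disc $D$, I would cover $D$ by small discs $D_{\tau_0}$ centred at points $\tau_0\in D$. On $D_{\tau_0}$ the centre $\by(\tau_0)$ is held fixed. Because $\sum_j b_j\abs{w_j}<\infty$, the quantity $\sum_jb_j\abs{z_j+\tau w_j-y_j(\tau_0)}$ stays below $\eps_1+\abs{\tau-\tau_0}\sum_jb_j\abs{w_j}<\delta_N$ once the disc radius is small enough. So on each small disc the line statement of \cref{prop:newton} applies, and the local representations agree on overlaps since the operator is intrinsic. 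Hence $\tau\mapsto\Vop_{\bz+\tau\bw}$ is holomorphic in operator norm on all of $D$.

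The main obstacle is this centre-independence and the covering step. A single real centre cannot serve the whole tube, so I rely on the identification with the intrinsic kernel, already established in \cref{prop:newton}, to glue the local Newton realisations; no continuity of $\by\mapsto\mathcal T_{\by}$ is needed. A secondary check is that $\eps_1<\delta_N$ leaves enough room on each small disc along a complex line, which requires the margin between $\eps_1$ and $\delta_N$ noted above.
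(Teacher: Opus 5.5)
Your proposal is correct and is essentially the paper's proof. You split $\Vop_{\bz}=\mathsf W_{\by,\bz}+\mathsf N_{\kappa,\bz}$ around a real centre $\by$ chosen coordinatewise, take the uniform bounds and local holomorphy from \cref{prop:newton,lem:helm-corr}, and glue the local Newton realizations through their centre-independent identification with $\Vop^0_{\bz}$, exactly as the paper does; your explicit covering of the disc along a complex line just spells out the paper's closing step of restricting the patched family, and the margin $\delta_N/2$ is harmless but not needed, since $\sum_jb_j\abs{z_j-y_j}<\sum_jb_j(\rho_j-1)\le\eps_1$ already holds strictly.
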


\begin{proof}
Let $\brho$ be $(\bb,\eps)$-admissible with
$\eps\le\min\{\eps_N,\delta_N\}$ and let $\bz\in\mathcal O_{\brho}$.
Choose $\by\in\U$ coordinatewise with $\abs{z_j-y_j}<\rho_j-1$; then
$\sum_jb_j\abs{z_j-y_j}\le\sum_jb_j(\rho_j-1)\le\eps$, so
\cref{prop:newton} applies with constants independent of $\by$, $\bz$
and $\brho$. By \eqref{eq:helm-split},
\eqref{eq:newton-kernel} and \cref{lem:helm-corr},
\[
\Vop_{\bz}=\mathsf W_{\by,\bz}+\mathsf N_{\kappa,\bz}
\]
on a neighbourhood of $\bz$, both terms lying uniformly in
$\cL(H^{-1/2}(\Ghat),H^{1/2}(\Ghat))$ and being operator-norm
holomorphic there in every coordinate, on finite-dimensional sections,
and along admissible complex lines, by
\cref{lem:volume-elliptic,prop:newton,lem:helm-corr}. This gives
\textup{(i)} and the uniform bound \textup{(ii)}, with a constant
independent of the admissible radii. Every step above also invokes
\cref{lem:complex-separation}, through the majorant
\eqref{eq:G0-majorant} and through \cref{lem:helm-corr}, so the
threshold must not exceed $\eps_0$; \cref{thm:scalar} therefore holds
with $\eps_1:=\min\{\eps_0,\eps_N,\delta_N\}$. For \textup{(iii)}, the local representations agree on overlaps. If
$\bz$ lies in neighbourhoods based at two real parameters
$\by_1,\by_2\in\U$, then \eqref{eq:newton-kernel} gives
\[
\mathsf W_{\by_1,\bz}\phi
=
\Vop^0_{\bz}\phi
=
\mathsf W_{\by_2,\bz}\phi
\]
first for $\phi\in C^{0,1}(\Ghat)$. The middle expression involves only
the boundary map $\rr_{\bz}$, not the auxiliary base point. The
identity then extends to every $\phi\in H^{-1/2}(\Ghat)$ by
continuity. The
local Newton realizations therefore glue to a single operator-valued
holomorphic map on all of $\mathcal O_{\brho}$. Restricting the patched family to any admissible
complex line yields the directional statement, including its
infinite-coordinate part.
\end{proof}


\subsection{The matrix-weighted vector term}

\begin{lemma}[Vector single-layer form]\label{lem:vector}
There exists $\eps_2>0$ such that on every $(\bb,\eps_2)$-admissible
complex domain the bilinear forms
\begin{equation}\label{eq:Bvec}
B_{\bz}^{\rm vec}(\widehat\jj,\widehat\vv)
:=
\iint_{\Ghat\times\Ghat}
G_{\kappa,\bz}(\xhat,\yhat)
\big[F_{\bz}(\yhat)\widehat\jj(\yhat)\big]\cdot
\big[F_{\bz}(\xhat)\widehat\vv(\xhat)\big]
\ds_{\yhat}\ds_{\xhat}
\end{equation}
are bounded on $\Hpar{-1/2}{\Ghat}\times\Hpar{-1/2}{\Ghat}$ uniformly
in $\bz$, and $\bz\mapsto B_{\bz}^{\rm vec}$ is holomorphic in every
coordinate, jointly holomorphic on finite-dimensional sections, and
holomorphic along admissible complex lines in the sense of
\cref{thm:scalar}\,\textup{(iii)}, with values in the Banach space of
bounded bilinear forms.
\end{lemma}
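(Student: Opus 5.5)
The plan is to reduce \cref{lem:vector} to \cref{thm:scalar} applied componentwise, with the matrix weights $F_{\bz}$ treated as multipliers. Write the bilinear product $[F_{\bz}(\yhat)\widehat\jj(\yhat)]\cdot[F_{\bz}(\xhat)\widehat\vv(\xhat)]=\sum_{m=1}^3 (F_{\bz}\widehat\jj)_m(\yhat)\,(F_{\bz}\widehat\vv)_m(\xhat)$, so that
\[
B^{\rm vec}_{\bz}(\widehat\jj,\widehat\vv)=\sum_{m=1}^3\pair{\Vop_{\bz}(F_{\bz}\widehat\jj)_m}{(F_{\bz}\widehat\vv)_m},
\]
first for Lipschitz tangential fields, where the kernel integral is classical by \cref{lem:kernel}. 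This identity defines the extension to $\Hpar{-1/2}{\Ghat}$. Two ingredients are needed. First, by \cref{lem:tangential-identification}, $\Hpar{-1/2}{\Ghat}$ sits continuously inside $H^{-1/2}(\Ghat;\C^3)$. Second, in the fixed atlas of \cref{ass:reference}, the map $\widehat\uu\mapsto(F_{\bz}\widehat\uu)_m$ is multiplication by a $W^{1,\infty}$ matrix field with norm bounded by $C_F$ from \eqref{eq:Fcomplex}. Using the partition of unity $\{\eta_\ell\}$ and local coordinate representations, such multipliers are bounded on $H^1$ and $L^2$, hence on $H^{1/2}$ by interpolation and on $H^{-1/2}$ by duality. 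The constant is uniform in $\bz$, and this holds for complex as well as real $\bz$, since only $W^{1,\infty}$ bounds enter.

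Uniform boundedness then follows from \cref{thm:scalar}\,(ii):
\[
\abs{B^{\rm vec}_{\bz}(\widehat\jj,\widehat\vv)}\le 3\,C_V\,C_{\rm mult}^2\norm{\widehat\jj}_{-1/2}\norm{\widehat\vv}_{-1/2},
\]
with $\eps_2:=\min\{\eps_0,\eps_1\}$.

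For holomorphy, write $B^{\rm vec}_{\bz}$ as the trilinear composition of three maps:
\[
\bz\mapsto M_{\bz}:=\big(\widehat\uu\mapsto F_{\bz}\widehat\uu\big)\in\cL\big(\Hpar{-1/2}{\Ghat},H^{-1/2}(\Ghat;\C^3)\big),\qquad
\bz\mapsto\Vop_{\bz},\qquad
\bz\mapsto M_{\bz}.
\]
Since $F_{\bz}=F_0+\sum_j z_jD_{\Ghat}\ps_j$ is affine in $\bz$, with the series converging in $W^{1,\infty}$ because $\sum_j b_j\abs{z_j}<\infty$ on the tube, the map $\bz\mapsto M_{\bz}$ is affine and bounded in operator norm. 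Consequently it is holomorphic in every coordinate, on finite-dimensional sections, and along admissible lines $\tau\mapsto M_{\bz+\tau\bw}=M_{\bz}+\tau M'_{\bw}$, where $M'_{\bw}$ denotes multiplication by $D_{\Ghat}\ps_{\bw}$. The map $\bz\mapsto\Vop_{\bz}$ is holomorphic in the same three senses by \cref{thm:scalar}\,(iii). A bounded trilinear combination of operator-norm holomorphic maps is holomorphic, by the product rule for complex Fréchet derivatives, or by locally uniform limits of difference quotients. This gives holomorphy in the Banach space of bounded bilinear forms. One also has to check that the pairing is complex bilinear with no conjugation, consistent with \cref{rem:bilinear}, so that no antiholomorphic dependence enters.

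I expect the main obstacle to be the multiplier step for complex $F_{\bz}$ on the negative-order space, together with the componentwise identification. Two facts are needed there. One is that the Cartesian components of $F_{\bz}\widehat\uu$ lie in $H^{-1/2}(\Ghat)$ with norm controlled by $\norm{\widehat\uu}_{\Hpar{-1/2}{\Ghat}}$, where $F_{\bz}$ acts as a bundle map from $T\Ghat$ to $\C^3$ rather than tangentially. The other is that the duality argument is legitimate with the bilinear pairing. I would handle both by expressing $\widehat\uu$ in local frames $\partial_i\chi_\ell$, which are $W^{1,\infty}$ by $C^{1,1}$ regularity. Then $F_{\bz}\widehat\uu=\sum_i u^i\,D(\rr_{\bz}\circ\chi_\ell)e_i$ with Lipschitz coefficients, which reduces everything to scalar $W^{1,\infty}$ multipliers on $H^{\pm1/2}$.
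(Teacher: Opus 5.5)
Your proposal is correct and follows essentially the same route as the paper. It applies \cref{thm:scalar} componentwise to $F_{\bz}\widehat\jj$, with $F_{\bz}$ acting as a uniformly bounded $W^{1,\infty}$ multiplier into $H^{-1/2}(\Ghat;\C^3)$ via \cref{lem:tangential-identification} and \eqref{eq:Fcomplex}, takes the same threshold $\eps_2=\min\{\eps_0,\eps_1\}$, and gets holomorphy (including along admissible lines) by composing the affine map $\bz\mapsto F_{\bz}$ with the holomorphic family $\bz\mapsto\Vop_{\bz}$; your local-frame treatment of $F_{\bz}$ as a bundle map just makes explicit what the paper compresses into its fixed-atlas matrix convention.
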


\begin{proof}
By \eqref{eq:Fcomplex}, multiplication by $F_{\bz}$ is bounded on
$H^s(\Ghat)$ for $\abs{s}\le1$, uniformly in $\bz$, and by
\cref{lem:tangential-identification} it maps
$\Hpar{-1/2}{\Ghat}$ boundedly into $H^{-1/2}(\Ghat;\C^3)$. Applying
\cref{thm:scalar} componentwise to $F_{\bz}\widehat\jj$ and pairing
with $F_{\bz}\widehat\vv\in H^{-1/2}(\Ghat;\C^3)$ gives
\[
\abs{B_{\bz}^{\rm vec}(\widehat\jj,\widehat\vv)}
\le
C_V\norm{F_{\bz}\widehat\jj}_{H^{-1/2}}
\norm{F_{\bz}\widehat\vv}_{H^{-1/2}}
\le
C_VC_F^2
\norm{\widehat\jj}_{\Hpar{-1/2}{\Ghat}}
\norm{\widehat\vv}_{\Hpar{-1/2}{\Ghat}} .
\]
The map $\bz\mapsto F_{\bz}$ is affine, hence entire, with values in
$W^{1,\infty}$. Multiplication and bilinear evaluation are continuous.
Holomorphy therefore follows from \cref{thm:scalar}\,\textup{(iii)} by
composition, valid for $\eps_2:=\min\{\eps_0,\eps_1\}$, since
\cref{thm:scalar} is applied componentwise and \eqref{eq:Fcomplex}
requires $\eps_0$. The directional statement follows in the same way. Along an admissible
line, $F_{\bz+\tau\bw}=F_{\bz}+\tau D_{\Ghat}\ps_{\bw}$ is affine in
$\tau$. The map $\tau\mapsto\Vop_{\bz+\tau\bw}$ is operator-norm
holomorphic by \cref{thm:scalar}\,\textup{(iii)}. Hence the composition
is holomorphic in $\tau$. This directional form is used in \cref{lem:segment}.
\end{proof}

\section{Shape holomorphy of the Maxwell EFIE}\label{sec:operator}

\subsection{Parametric holomorphy}\label{sec:paramhol}

We adopt the notion of parametric holomorphy used in Refs.~\refcite{JerezSchwabZech2017},~\refcite{ChkifaCohenSchwab2015} and~\refcite{DolzHenriquez2024}.

The following is Definition~2.6 of Ref.~\refcite{DolzHenriquez2024}, with the same tube, admissibility budget and polyellipse bound.

\begin{definition}[$(\bb,p,\eps)$-holomorphy]\label{def:hol}
Let $Y$ be a complex Banach space, $\bb\in\ell^p(\N)$ with $0<p<1$, and
$\eps>0$. A continuous map $u:\U\to Y$ is
$(\bb,p,\eps)$-holomorphic if for every $(\bb,\eps)$-admissible $\brho$
it admits an extension
\[
u:\mathcal O_{\brho}\longrightarrow Y
\]
which is holomorphic in each coordinate separately, jointly
holomorphic on every finite-dimensional section, and which satisfies
\[
\sup_{\bz\in\mathcal E_{\brho}}\norm{u(\bz)}_Y\le C_\eps
\]
with $C_\eps$ depending on $\eps$ but not on $\brho$.
\end{definition}

As fixed in \cref{sec:geometry}, $\mathcal O_{\brho}$ is the domain of
holomorphy and $\mathcal E_{\brho}$ carries the uniform bound required
by the approximation theory; by \eqref{eq:EinO} a uniform bound on
$\mathcal O_{\brho}$ implies the latter. All estimates below provide
this stronger bound, and we record them on $\mathcal O_{\brho}$ whenever
that is what the proof gives.

For $\bz\in\mathcal O_{\brho}$ define the complex-bilinear form
\begin{equation}\label{eq:complexEFIE}
\widehat a_{\bz}(\widehat\jj,\widehat\vv)
:=
\mathrm i\kappa\,B_{\bz}^{\rm vec}(\widehat\jj,\widehat\vv)
+
\frac1{\mathrm i\kappa}
\pair{\Vop_{\bz}\divh\widehat\jj}{\divh\widehat\vv}_{H^{1/2},H^{-1/2}} .
\end{equation}
By \cref{rem:bilinear}, every duality in \eqref{eq:complexEFIE} is
complex bilinear.

\begin{theorem}[Main operator theorem]\label{thm:operator}
Under \cref{ass:reference,ass:param} there exists $\eps_A>0$ such that
\begin{equation}\label{eq:operator-map}
\U\ni\by\longmapsto\widehat\Aop_{\by}\in\cL(\Xhat,\Xhat')
\end{equation}
is $(\bb,p,\eps_A)$-holomorphic, with the extension holomorphic along
admissible complex lines in the sense of
\cref{thm:scalar}\,\textup{(iii)}: on every open disc $D\subset\C$ with
$\bz+\tau\bw\in\mathcal O_{\brho}$ for all $\tau\in D$, the map
$\tau\mapsto\widehat\Aop_{\bz+\tau\bw}$ is operator-norm holomorphic.
More precisely, for every $(\bb,\eps_A)$-admissible $\brho$, formula
\eqref{eq:complexEFIE} defines an operator
$\widehat\Aop_{\bz}\in\cL(\Xhat,\Xhat')$ for every
$\bz\in\mathcal O_{\brho}$ with
\begin{equation}\label{eq:CA}
\sup_{\bz\in\mathcal O_{\brho}}
\norm{\widehat\Aop_{\bz}}_{\cL(\Xhat,\Xhat')}\le C_A ,
\end{equation}
$C_A$ independent of $\brho$, and $\widehat\Aop_{\bz}$ coincides with
\eqref{eq:pullback-def} for real $\bz=\by\in\U$.
\end{theorem}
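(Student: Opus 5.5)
The plan is to work at the level of forms, as announced in \cref{rem:bilinear}, and to assemble \eqref{eq:complexEFIE} from \cref{lem:vector} and \cref{thm:scalar}. I take $\eps_A:=\min\{\eps_0,\eps_1,\eps_2\}$, so that for every $(\bb,\eps_A)$-admissible $\brho$ both results hold on $\mathcal O_{\brho}$.

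\emph{Boundedness.} For the first term of \eqref{eq:complexEFIE}, \cref{lem:vector} gives
\[
\abs{B^{\rm vec}_{\bz}(\widehat\jj,\widehat\vv)}\le C_VC_F^2\norm{\widehat\jj}_{\Xhat}\norm{\widehat\vv}_{\Xhat},
\]
since the $\Hpar{-1/2}{\Ghat}$ norm is dominated by the graph norm \eqref{eq:Xnorm}. For the second term, $\divh\widehat\jj,\divh\widehat\vv\in H^{-1/2}(\Ghat)$ with norms bounded by $\norm{\cdot}_{\Xhat}$. By \cref{thm:scalar}(i)--(ii),
\[
\abs{\pair{\Vop_{\bz}\divh\widehat\jj}{\divh\widehat\vv}}\le C_V\norm{\widehat\jj}_{\Xhat}\norm{\widehat\vv}_{\Xhat}.
\]
Together these give \eqref{eq:CA} with $C_A=\kappa C_VC_F^2+\kappa^{-1}C_V$, which is independent of $\brho$ and $\bz$. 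Because \eqref{eq:EinO} holds, the bound on $\mathcal O_{\brho}$ also yields the polyellipse bound required in \cref{def:hol}.

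\emph{Holomorphy.} The map $\widehat\jj\mapsto\divh\widehat\jj$ is a fixed bounded linear map $\Xhat\to H^{-1/2}$ that does not depend on $\bz$. The scalar term is therefore the composition of the operator-norm holomorphic map $\bz\mapsto\Vop_{\bz}$ from \cref{thm:scalar}(iii) with fixed bounded maps and a continuous bilinear evaluation. It is holomorphic in each coordinate, on finite-dimensional sections, and along admissible complex lines, with values in bounded bilinear forms on $\Xhat$. The vector term has the same three kinds of holomorphy by \cref{lem:vector}, after restricting from $\Hpar{-1/2}{\Ghat}$ to the continuously embedded space $\Xhat$. Transferring through the isometric identification of \cref{rem:bilinear} gives the holomorphy statements for $\bz\mapsto\widehat\Aop_{\bz}\in\cL(\Xhat,\Xhat')$.

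\emph{Real parameters and continuity.} For $\bz=\by\in\U$ we have $G_{\kappa,\by}(\xhat,\yhat)=G_\kappa(\rr_{\by}(\xhat),\rr_{\by}(\yhat))$, so \eqref{eq:complexEFIE} agrees with \eqref{eq:fixed-form}. This holds first for Lipschitz fields, where the integrals are classical and \cref{lem:kernel} supplies the majorant. It then extends to all of $\Xhat$ by density, since both sides are bounded forms. \Cref{prop:cancel} identifies \eqref{eq:fixed-form} with the pullback \eqref{eq:pullback-def}. Continuity of $\by\mapsto\widehat\Aop_{\by}$ on $\U$ in the product topology, which \cref{def:hol} requires, is where I expect the only real work.

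Coordinatewise holomorphy does not by itself give continuity in the product topology. I would argue as follows. Take $\by^{(n)}\to\by$ coordinatewise. Because $\bb\in\ell^1$, the tail sums $\sum_{j>N}b_j\abs{y^{(n)}_j-y_j}\le2\sum_{j>N}b_j$ are uniformly small. Hence $\sum_jb_j\abs{y^{(n)}_j-y_j}\to0$, and eventually this quantity is below $\delta_N$ and $\eps_N$.

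Now apply the local trace--Newton representation $\Vop_{\bz}=\mathsf W_{\by,\bz}+\mathsf N_{\kappa,\bz}$ from \cref{prop:newton-proof}, centred at the limit $\by$. The resolvent estimate in the proof of \cref{prop:newton} bounds $\norm{\mathsf W_{\by,\by^{(n)}}-\mathsf W_{\by,\by}}$ by $C\sum_jb_j\abs{y^{(n)}_j-y_j}$. The same Cauchy-estimate argument along the segment from $\by$ to $\by^{(n)}$ applies to $\mathsf N_{\kappa,\cdot}$ and to the affine factors $F_{\by^{(n)}}$. Both differences therefore tend to zero in operator norm, which gives the required continuity.
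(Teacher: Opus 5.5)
Your proof is correct. The bound \eqref{eq:CA}, the three forms of holomorphy and the identification at real parameters follow the paper's proof. The paper takes $\eps_A=\min\{\eps_1,\eps_2\}$; adding $\eps_0$ changes nothing, since $\eps_1\le\eps_0$. The one genuine difference is how you obtain continuity on $\U$.

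The paper defers continuity to \cref{cor:Lip}, which rests on \cref{lem:segment}. For $\by,\widetilde\by\in\U$ it sets $\bw=\widetilde\by-\by$ and $S=\sum_jb_j\abs{w_j}$. It then places a disc of radius $\sigma=\eps_A/(2S)$ around every point of the real segment inside one $(\bb,\eps_A)$-admissible tube: half the budget goes to the direction $\bw$, half to positive slacks $\eta_j$. The Cauchy estimate for the assembled map $\tau\mapsto\widehat\Aop_{\by+(t+\tau)\bw}$, bounded by $C_A$, then gives the global estimate $\norm{\widehat\Aop_{\by}-\widehat\Aop_{\widetilde\by}}\le(2C_A/\eps_A)\sum_jb_j\abs{y_j-\widetilde y_j}$.

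The only inputs there are line holomorphy and the uniform tube bound, and you had already proved both for $\widehat\Aop_{\bz}$ itself. So the Cauchy-segment argument you invoke for $\mathsf N_{\kappa,\cdot}$ applies verbatim to the whole operator. Reopening the construction, with the resolvent identity for $\mathsf W_{\by,\cdot}$ and the affine dependence of $F_{\by}$, is therefore unnecessary.

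Your decomposition does yield the same weighted-$\ell^1$ Lipschitz rate. It also shows directly that the Newton part inherits its continuity from the Lipschitz dependence of the coefficient $\mathcal A_{\by,\bz}$, without using line holomorphy of $\mathsf W$. However, your estimate is only local: the resolvent piece needs $\sum_jb_j\abs{y^{(n)}_j-y_j}<\delta_N$ about a common centre $\by$. The paper's version holds globally on $\U$ and uses nothing beyond the theorem's own conclusions. Its explicit Lipschitz estimate and Cauchy-segment mechanism are then reused in \cref{lem:uniforminverse,lem:complexinverse,lem:disc-uniform}.
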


\begin{proof}
Let $\eps_A:=\min\{\eps_1,\eps_2\}$ and let $\brho$ be
$(\bb,\eps_A)$-admissible. By \cref{lem:vector},
\[
\abs{B_{\bz}^{\rm vec}(\widehat\jj,\widehat\vv)}
\le
C\norm{\widehat\jj}_{\Hpar{-1/2}{\Ghat}}
\norm{\widehat\vv}_{\Hpar{-1/2}{\Ghat}},
\]
and by \cref{thm:scalar},
\[
\Big|\pair{\Vop_{\bz}\divh\widehat\jj}{\divh\widehat\vv}\Big|
\le
C_V\norm{\divh\widehat\jj}_{H^{-1/2}}
\norm{\divh\widehat\vv}_{H^{-1/2}} .
\]
Adding and using \eqref{eq:Xnorm} gives
$\abs{\widehat a_{\bz}(\widehat\jj,\widehat\vv)}\le
C\norm{\widehat\jj}_{\Xhat}\norm{\widehat\vv}_{\Xhat}$ uniformly over
admissible complex domains, so $\widehat a_{\bz}$ determines a unique
$\widehat\Aop_{\bz}\in\cL(\Xhat,\Xhat')$ satisfying \eqref{eq:CA}.

Both terms of \eqref{eq:complexEFIE} are holomorphic with values in the
Banach space $\mathcal B_2(\Xhat;\C)$ of bounded complex-bilinear
forms, by \cref{thm:scalar}\,\textup{(iii)} and \cref{lem:vector}; the
canonical isometry $\mathcal B_2(\Xhat;\C)\cong\cL(\Xhat,\Xhat')$ of
\cref{rem:bilinear} preserves holomorphy.
Finally, for real $\bz=\by\in\U$, comparison of
\eqref{eq:complexEFIE} with \eqref{eq:fixed-form} and
\cref{prop:cancel} shows that $\widehat\Aop_{\by}$ is the Piola pullback
\eqref{eq:pullback-def} of the physical EFIE operator.

This proves the extension, holomorphy and uniform boundedness
requirements of \cref{def:hol}. Continuity of the real restriction is
proved independently in \cref{cor:Lip}, whose proof uses only the
extension, the line holomorphy and the bound \eqref{eq:CA}.
\end{proof}

The following consequence of directional holomorphy will be used in
\cref{cor:Lip} and \cref{lem:complexinverse}.

\begin{lemma}[Cauchy estimate along complex segments]\label{lem:segment}
Let $\bz_0,\bz_1$ be such that the segment
$\bz_t:=\bz_0+t\bw$, $\bw:=\bz_1-\bz_0$, $t\in[0,1]$, has the property
that for some $\sigma>0$ and every $t\in[0,1]$ the image of the open
disc $D_\sigma:=\{\tau:\abs\tau<\sigma\}$ under
$\tau\mapsto\bz_t+\tau\bw$ lies in a $(\bb,\eps_A)$-admissible tube.
Then
\[
\norm{\widehat\Aop_{\bz_1}-\widehat\Aop_{\bz_0}}
_{\cL(\Xhat,\Xhat')}
\le
\frac{C_A}{\sigma} .
\]
\end{lemma}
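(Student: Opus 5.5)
The plan is to use the directional holomorphy of \cref{thm:operator} together with the uniform bound \eqref{eq:CA} and a Cauchy integral estimate in the scalar variable. Define $f(\tau):=\widehat\Aop_{\bz_0+\tau\bw}\in\cL(\Xhat,\Xhat')$. For each $t\in[0,1]$, the hypothesis says that $\bz_t+\tau\bw=\bz_0+(t+\tau)\bw$ lies in a $(\bb,\eps_A)$-admissible tube for $\abs\tau<\sigma$. Hence $f$ is defined on the open set $\Omega:=\bigcup_{t\in[0,1]}(t+D_\sigma)$, which contains $[0,1]$.

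The first step is to show that $f$ is operator-norm holomorphic on $\Omega$ with $\sup_\Omega\norm{f}\le C_A$. On each disc $t+D_\sigma$ this follows from \cref{thm:operator}: the line holomorphy applies to that disc inside the admissible tube, and \eqref{eq:CA} holds with $C_A$ independent of $\brho$. One point needs care. Different $t$ may use different admissible tubes, yet the extension $\widehat\Aop_{\bz}$ is given by the single formula \eqref{eq:complexEFIE}, which does not depend on the tube. So the local pieces agree on overlaps and $f$ is a well-defined holomorphic function on $\Omega$.

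The second step is the Cauchy estimate for the derivative. For $t\in[0,1]$ and any $r<\sigma$, the Cauchy formula on the circle $\abs{\tau-t}=r$, valid for Banach-valued holomorphic functions, gives
\[
\norm{f'(t)}\le \frac{1}{r}\sup_{\abs{\tau-t}=r}\norm{f(\tau)}\le\frac{C_A}{r}.
\]
Letting $r\uparrow\sigma$ yields $\norm{f'(t)}\le C_A/\sigma$.

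The final step integrates along the real segment: $f(1)-f(0)=\int_0^1 f'(t)\,dt$, and $f'$ is continuous on $[0,1]$ because $f$ is holomorphic there. This gives $\norm{\widehat\Aop_{\bz_1}-\widehat\Aop_{\bz_0}}\le\int_0^1\norm{f'(t)}\,dt\le C_A/\sigma$.

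The only delicate point is the gluing in the first step, namely the consistency of the extension across possibly different admissible tubes. It is settled by the tube-independence of the defining formula \eqref{eq:complexEFIE}, equivalently by the identity theorem on overlapping discs. The remaining steps are routine.
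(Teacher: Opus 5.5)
Your proposal is correct and follows the paper's argument: line holomorphy from \cref{thm:operator} with the uniform bound \eqref{eq:CA}, a Cauchy estimate giving $\norm{\tfrac{d}{dt}\widehat\Aop_{\bz_t}}\le C_A/\sigma$ for every $t\in[0,1]$, and integration along the segment. Your additional details are refinements of steps the paper's short proof leaves implicit. These are the circles of radius $r<\sigma$ with $r\uparrow\sigma$, and the observation that \eqref{eq:complexEFIE} does not depend on the tube, so the local pieces glue.
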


\begin{proof}
By \cref{thm:operator} the map $\tau\mapsto\widehat\Aop_{\bz_t+\tau\bw}$
is operator-norm holomorphic on the open disc $D_\sigma$ and bounded there by $C_A$, by \eqref{eq:CA}.
The Cauchy estimate gives
\[
\norm{\tfrac{d}{dt}\widehat\Aop_{\bz_t}}_{\cL(\Xhat,\Xhat')}
\le
\frac{C_A}{\sigma}
\]
uniformly for $t\in[0,1]$. Integrating along the segment yields the
result.
\end{proof}

\begin{corollary}[Lipschitz perturbation estimate]\label{cor:Lip}
There is $C_L>0$ such that
\begin{equation}\label{eq:Lip}
\norm{\widehat\Aop_{\by}-\widehat\Aop_{\widetilde\by}}
_{\cL(\Xhat,\Xhat')}
\le
C_L\sum_{j\ge1}b_j\abs{y_j-\widetilde y_j}
\qquad\text{for all }\by,\widetilde\by\in\U .
\end{equation}
In particular, since $\bb\in\ell^1(\N)$, the map
\eqref{eq:operator-map} is continuous on $\U$ equipped with the product
topology.
\end{corollary}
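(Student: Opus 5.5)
The plan is to deduce \eqref{eq:Lip} directly from \cref{lem:segment}, applied to the complex segment joining $\by$ to $\widetilde\by$, with the admissible radii chosen adapted to the difference $\bw:=\widetilde\by-\by$. Set $\Delta:=\sum_{j\ge1}b_j\abs{y_j-\widetilde y_j}$. If $\Delta=0$ then $y_j=\widetilde y_j$ for every $j$ (all $b_j>0$), and there is nothing to prove. If $\Delta$ exceeds a fixed fraction of $\eps_A$, say $\Delta\ge\eps_A/4$, the estimate follows trivially from \eqref{eq:CA}. Indeed, real parameters lie in every admissible tube, so
\[
\norm{\widehat\Aop_{\by}-\widehat\Aop_{\widetilde\by}}\le 2C_A\le (8C_A/\eps_A)\,\Delta .
\]
It therefore remains to treat $0<\Delta<\eps_A/4$.

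In that regime I will take $\sigma:=\eps_A/(4\Delta)>1$ and define radii $\rho_j:=1+2\sigma\abs{w_j}+\eta_j$. Here $\eta_j>0$ is a small sequence with $\sum_j\eta_jb_j\le\eps_A/2$, for instance $\eta_j:=\eps_A b_j^{-1}2^{-j-1}$, which is harmless since we only need $\rho_j>1$. Then
\[
\sum_j(\rho_j-1)b_j\le 2\sigma\Delta+\eps_A/2=\eps_A,
\]
so $\brho$ is $(\bb,\eps_A)$-admissible. For $t\in[0,1]$ and $\abs\tau<\sigma$, the point $\bz_t+\tau\bw$ has $j$-th coordinate $y_j+(t+\tau)w_j$. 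Its distance to $[-1,1]$ is at most $\abs{\tau}\abs{w_j}$, because $y_j+tw_j\in[-1,1]$ by convexity. This is $<\sigma\abs{w_j}\le\rho_j-1$ whenever $w_j\ne0$, and equals $0<\rho_j-1$ otherwise. Hence the whole family of discs lies in $\mathcal O_{\brho}$, which is the hypothesis of \cref{lem:segment}.

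\Cref{lem:segment} then gives
\[
\norm{\widehat\Aop_{\widetilde\by}-\widehat\Aop_{\by}}\le C_A/\sigma=(4C_A/\eps_A)\,\Delta ,
\]
so $C_L:=8C_A/\eps_A$ works in both regimes. The step needing care is that \cref{lem:segment} uses line holomorphy along a direction $\bw$ with possibly infinitely many nonzero coordinates. This is covered by \cref{thm:operator}, since $\sum_jb_j\abs{w_j}<\infty$, and the constant $C_A$ there is independent of $\brho$. Continuity in the product topology then follows. Given $\by$ and $\epsilon>0$, choose $N$ with $2\sum_{j>N}b_j<\epsilon/(2C_L)$. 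Any $\widetilde\by$ with $\abs{y_j-\widetilde y_j}$ small for $j\le N$ then satisfies $C_L\Delta<\epsilon$, using $\abs{y_j-\widetilde y_j}\le2$ and $\bb\in\ell^1$.
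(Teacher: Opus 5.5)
Your proof is correct and follows the paper's argument. Both apply \cref{lem:segment} along the real segment from $\by$ to $\widetilde\by$, with radii $\rho_j-1$ proportional to $\abs{w_j}$ plus a summable reserve $\eta_j$, and then deduce product-topology continuity from the $\ell^1$ tail. Your case split at $\Delta\ge\eps_A/4$ is harmless but unnecessary, since \cref{lem:segment} requires only $\sigma>0$; the paper takes $\sigma=\eps_A/(2S)$ for every $S>0$ and gets the sharper constant $C_L=2C_A/\eps_A$.
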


\begin{proof}
Let $\by,\widetilde\by\in\U$ with $\by\ne\widetilde\by$ and put
$\bw:=\widetilde\by-\by$. Since $\bb\in\ell^1(\N)$,
\[
S:=\sum_{j\ge1}b_j\abs{w_j}\le2\sum_{j\ge1}b_j<\infty .
\]
Since $\U$ is convex,
$\by+t\bw\in\U$ for every $t\in[0,1]$.

We allocate half of the admissibility budget to the active direction
and reserve half to ensure $\rho_j>1$ for every $j$, as
\cref{def:admissible} requires. Set
\[
\sigma:=\frac{\eps_A}{2S},
\qquad
\rho_j-1:=\sigma\abs{w_j}+\eta_j ,
\]
where the $\eta_j>0$ are chosen with $\sum_jb_j\eta_j\le\eps_A/2$, for
instance $\eta_j=\eps_A\,2^{-j-1}/b_j$. Then
\[
\sum_{j\ge1}b_j(\rho_j-1)
=
\sigma S+\sum_{j\ge1}b_j\eta_j
\le
\frac{\eps_A}{2}+\frac{\eps_A}{2}
=\eps_A ,
\]
so $\brho$ is $(\bb,\eps_A)$-admissible. For $t\in[0,1]$ and $\abs{\tau}<\sigma$, the $j$th coordinate of
$\by+t\bw+\tau\bw$ lies at distance at most $\abs{\tau w_j}$ from
$[-1,1]$, because $\by+t\bw\in\U$. Thus
\[
\abs{\tau w_j}<\sigma\abs{w_j}<\rho_j-1 ,
\]
so the whole disc lies in $\mathcal O_{\brho}$. \Cref{lem:segment} therefore gives
\[
\norm{\widehat\Aop_{\widetilde\by}-\widehat\Aop_{\by}}
\le
\frac{C_A}{\sigma}
=
\frac{2C_A}{\eps_A}\sum_{j\ge1}b_j\abs{y_j-\widetilde y_j} ,
\]
which is \eqref{eq:Lip} with $C_L=2C_A/\eps_A$.

For the continuity statement, given $\delta>0$ choose $J$ with
$\sum_{j>J}b_j<\delta/(4C_L)$; then the product-topology neighbourhood
$\{\widetilde\by:\abs{y_j-\widetilde y_j}<\delta/(2C_L\sum_{j\le
J}b_j),\ j\le J\}$ maps into the $\delta$-ball around
$\widehat\Aop_{\by}$, by \eqref{eq:Lip}.
\end{proof}

\subsection{Incident data}\label{sec:solution}

For a plane wave
\begin{equation}\label{eq:plane}
\EE^{\rm inc}(x)=\pp\,e^{\mathrm i\kappa\ddir\cdot x},
\qquad
\ddir\in\mathbb S^2,\quad\pp\cdot\ddir=0 ,
\end{equation}
the EFIE datum is $f_{\Gamma_{\by}}=-\Iop_{\Gamma_{\by}}\gamma_D
\EE^{\rm inc}$, and the pulled-back datum
$\widehat f_{\by}:=\Pop_{\by}'f_{\Gamma_{\by}}$ satisfies, for
tangential $\widehat\vv$,
\begin{equation}\label{eq:data-pull}
\widehat f_{\by}(\widehat\vv)
=
-\int_{\Gamma_{\by}}\gamma_D\EE^{\rm inc}\cdot\Pop_{\by}\widehat\vv\ds
=
-\int_{\Ghat}\EE^{\rm inc}(\rr_{\by}(\xhat))\cdot
F_{\by}(\xhat)\widehat\vv(\xhat)\ds_{\xhat} ,
\end{equation}
the second equality using \eqref{eq:piola}, the change of variables,
and the tangentiality of $F_{\by}\widehat\vv$ on $\Gamma_{\by}$. Once
again the surface Jacobian cancels, and no normal field or tangential
projector needs to be differentiated. For $\bz\in\mathcal O_{\brho}$ we
define
\begin{equation}\label{eq:data-complex}
\widehat f_{\bz}(\widehat\vv)
:=
-\int_{\Ghat}
e^{\mathrm i\kappa\ddir\cdot\rr_{\bz}(\xhat)}\,
\pp\cdot F_{\bz}(\xhat)\widehat\vv(\xhat)\ds_{\xhat} .
\end{equation}

\begin{lemma}[Holomorphy of plane-wave data]\label{lem:data}
There exists $\eps_f>0$ such that
$\by\mapsto\widehat f_{\by}\in\Xhat'$ is
$(\bb,p,\eps_f)$-holomorphic. The same holds for any incident field that is entire in $x$ and whose
values and first spatial derivatives are uniformly bounded on the
admissible complex point set $\{\rr_{\bz}(\Ghat)\}$, which is
sufficient for the $H^{1/2}$ multiplier argument below.
\end{lemma}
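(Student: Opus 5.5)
The plan is to bound $\widehat f_{\bz}$ in $\Xhat'$ by duality through $\Hpar{-1/2}{\Ghat}$: only the first component of the graph norm \eqref{eq:Xnorm} is needed, since \eqref{eq:data-complex} involves $\widehat\vv$ itself and not $\divh\widehat\vv$. Writing
\[
g_{\bz}(\xhat):=F_{\bz}(\xhat)^\top\big(e^{\mathrm i\kappa\ddir\cdot\rr_{\bz}(\xhat)}\pp\big),
\]
a $\C^2$-valued (cotangent) field in the fixed atlas of \cref{ass:reference}, we have $\widehat f_{\bz}(\widehat\vv)=-\pair{g_{\bz}}{\widehat\vv}$. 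We therefore get
\[
\abs{\widehat f_{\bz}(\widehat\vv)}\le\norm{g_{\bz}}_{H^{1/2}}\norm{\widehat\vv}_{\Hpar{-1/2}{\Ghat}}\le\norm{g_{\bz}}_{H^{1/2}}\norm{\widehat\vv}_{\Xhat}.
\]
This uses \cref{lem:tangential-identification} to read $\widehat\vv$ componentwise in $H^{-1/2}$. It suffices to bound $g_{\bz}$ in $W^{1,\infty}(\Ghat)\hookrightarrow H^1(\Ghat)\hookrightarrow H^{1/2}(\Ghat)$ uniformly over $\bz\in\mathcal O_{\brho}$.

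The first step is the uniform bound. By \eqref{eq:Fcomplex}, $\norm{F_{\bz}}_{W^{1,\infty}}\le C_F$. For the exponential, the choice in the proof of \cref{lem:complex-separation} gives $\by\in\U$ with $\rr_{\bz}=\rr_{\by}+\sum_jw_j\ps_j$ and $\sum_j\abs{w_j}b_j\le\eps_f$. Hence $\abs{\Im(\ddir\cdot\rr_{\bz})}\le\eps_f$, so $\abs{e^{\mathrm i\kappa\ddir\cdot\rr_{\bz}}}\le e^{\kappa\eps_f}$. Its tangential gradient is $\mathrm i\kappa e^{\mathrm i\kappa\ddir\cdot\rr_{\bz}}F_{\bz}^\top\ddir$, which is bounded by $\kappa e^{\kappa\eps_f}C_F$. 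Products of $W^{1,\infty}$ functions stay in $W^{1,\infty}$ with product bounds, so $\sup_{\bz\in\mathcal O_{\brho}}\norm{\widehat f_{\bz}}_{\Xhat'}\le C_f$ independently of $\brho$. By \eqref{eq:EinO} this gives the polyellipse bound of \cref{def:hol}. Take $\eps_f\le\eps_0$.

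The second step is holomorphy. The map $\bz\mapsto\rr_{\bz}$ is affine with values in $W^{2,\infty}(\Ghat;\C^3)$, and the series converges absolutely because $\sum b_j\abs{z_j}<\infty$ on the tube. Likewise $\bz\mapsto F_{\bz}$ is affine into $W^{1,\infty}$. The map $u\mapsto e^{\mathrm i\kappa u}$ is holomorphic as a map of the Banach algebra $W^{1,\infty}(\Ghat)$ into itself, by its power series. Hence $\bz\mapsto g_{\bz}\in W^{1,\infty}$ is holomorphic in each coordinate and jointly on finite-dimensional sections, and also along admissible lines. Composing with the bounded linear map $g\mapsto-\pair{g}{\cdot}$ from $W^{1,\infty}$ into $\Xhat'$ gives the required holomorphy. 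Continuity on $\U$ in the product topology follows as in \cref{cor:Lip}: we have the estimate $\norm{g_{\by}-g_{\widetilde\by}}_{W^{1,\infty}}\le C\sum b_j\abs{y_j-\widetilde y_j}$, plus tail control from $\bb\in\ell^1$. For real $\by$, \eqref{eq:data-complex} agrees with \eqref{eq:data-pull}.

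For a general entire incident field $\EE^{\rm inc}$, I would replace $e^{\mathrm i\kappa\ddir\cdot\rr_{\bz}}\pp$ by $\EE^{\rm inc}(\rr_{\bz})$. Its holomorphy follows from holomorphic composition, pointwise and then in $W^{1,\infty}$ via the hypothesized uniform bounds on values and first derivatives on $\{\rr_{\bz}(\Ghat)\}$. The chain rule $D_{\Ghat}[\EE^{\rm inc}\circ\rr_{\bz}]=D\EE^{\rm inc}(\rr_{\bz})F_{\bz}$ gives the $W^{1,\infty}$ bound. The main obstacle I expect is precisely this point: upgrading pointwise holomorphy of the composition to $W^{1,\infty}$-valued holomorphy. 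I would handle it as in \cref{lem:helm-corr}, using weak holomorphy plus local uniform bounds and the Cauchy formula to get norm holomorphy in $H^{1/2}$, which is all that is needed.
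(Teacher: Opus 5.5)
Your proposal is correct and follows essentially the same route as the paper: $\bz\mapsto\rr_{\bz}$ and $\bz\mapsto F_{\bz}$ are affine, the exponential is entire with values in $W^{1,\infty}(\Ghat)$ and uniformly bounded on the tube, and the resulting $W^{1,\infty}\subset H^{1/2}$ field is paired against $\widehat\vv\in\Hpar{-1/2}{\Ghat}$, so only the first part of the graph norm is needed. You also supply details the paper leaves implicit, namely the bound $\abs{\Im(\ddir\cdot\rr_{\bz})}\le\eps_f$ behind the uniform tube bound and the product-topology continuity on $\U$ required by \cref{def:hol}. For a general entire incident field the weak-to-norm upgrade is not needed, because composition with an entire function is already holomorphic on $W^{1,\infty}(\Ghat;\C^3)$ over the bounded admissible range.
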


\begin{proof}
On any admissible bounded complex domain,
\[
\bz\longmapsto e^{\mathrm i\kappa\ddir\cdot\rr_{\bz}}
\]
is entire with values in $W^{1,\infty}(\Ghat)$. This follows because
$\bz\mapsto\rr_{\bz}$ is affine with values in $W^{2,\infty}$ and the
exponential is entire. The map is uniformly bounded on the admissible
set.

Since $\bz\mapsto F_{\bz}$ is affine, the product is holomorphic and
uniformly bounded as a multiplier on $H^{1/2}$. The functional
\eqref{eq:data-complex} is therefore bounded on $\Xhat$, because
$\Hpar{-1/2}{\Ghat}\ni\widehat\vv\mapsto
\int_{\Ghat}\phi\cdot\widehat\vv$ is bounded for
$\phi\in H^{1/2}(\Ghat;\C^3)$. The claimed holomorphy follows.
\end{proof}

\subsection{Uniform invertibility from pointwise nonresonance}

Let $\Sigma_E(D_{\by})$ denote the set of interior Maxwell eigenvalues
of $D_{\by}$ with perfect electric conductor boundary condition.

\begin{assumption}[Pointwise nonresonance]\label{ass:invert}
For every $\by\in\U$ one has $\kappa^2\notin\Sigma_E(D_{\by})$.
\end{assumption}

In Refs.~\refcite{DolzHenriquez2024} and~\refcite{JerezSchwabZech2017} a parameter-uniform bound on the inverse is imposed as a hypothesis. Here it is instead
\emph{derived}: pointwise nonresonance, the Lipschitz estimate
\eqref{eq:Lip} and compactness of $\U$ in the product topology suffice,
so no uniform assumption beyond \cref{ass:invert} is made. The same
mechanism upgrades pointwise discrete stability to parametric
uniformity in \cref{lem:disc-uniform}.

\begin{lemma}[Uniform invertibility]\label{lem:uniforminverse}
Under \cref{ass:reference,ass:param,ass:invert} there is
$C_{\rm inv}<\infty$ with
\begin{equation}\label{eq:uniforminverse}
\sup_{\by\in\U}
\norm{\widehat\Aop_{\by}^{-1}}_{\cL(\Xhat',\Xhat)}\le C_{\rm inv} .
\end{equation}
\end{lemma}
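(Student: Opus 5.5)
The argument has two stages. First we obtain pointwise invertibility from \cref{ass:invert}. Then we upgrade it to a uniform bound using the Lipschitz estimate \eqref{eq:Lip}, a Neumann-series perturbation, and compactness of $\U$ in the product topology.

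For fixed $\by\in\U$, $\Gamma_{\by}$ is a $C^{1,1}$ closed embedded surface bounding $D_{\by}$. Since $\kappa^2\notin\Sigma_E(D_{\by})$, the classical EFIE theory gives that $\Aop_{\kappa,\Gamma_{\by}}$ is an isomorphism $X(\Gamma_{\by})\to X(\Gamma_{\by})'$. This theory is recalled after \eqref{eq:efieabstract}: the operator is Fredholm of index zero on $X(\Gamma)$ (via a Helmholtz/Hodge splitting and a Gårding inequality), and it is injective away from interior electric resonances. By \eqref{eq:inverse-transfer}, $\widehat\Aop_{\by}$ is then invertible with $M(\by):=\norm{\widehat\Aop_{\by}^{-1}}_{\cL(\Xhat',\Xhat)}<\infty$. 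No uniformity is claimed at this stage.

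Next comes a local uniform bound. Fix $\by$ and let $\widetilde\by\in\U$ satisfy $\norm{\widehat\Aop_{\widetilde\by}-\widehat\Aop_{\by}}\le 1/(2M(\by))$. Writing $\widehat\Aop_{\widetilde\by}=\widehat\Aop_{\by}\big(\Id+\widehat\Aop_{\by}^{-1}(\widehat\Aop_{\widetilde\by}-\widehat\Aop_{\by})\big)$, a Neumann series gives invertibility with $\norm{\widehat\Aop_{\widetilde\by}^{-1}}\le 2M(\by)$. By \cref{cor:Lip}, the set
\[
\mathcal N_{\by}:=\Big\{\widetilde\by\in\U:\ C_L\sum_{j\ge1}b_j\abs{y_j-\widetilde y_j}<\tfrac{1}{2M(\by)}\Big\}
\]
satisfies this condition. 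It contains a product-topology open neighbourhood of $\by$, as in the continuity argument of \cref{cor:Lip}: choose $J$ with $C_L\sum_{j>J}2b_j<1/(4M(\by))$ and then constrain only the first $J$ coordinates.

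To conclude, the sets $\mathcal N_{\by}$ (more precisely, their open interiors in the product topology) cover $\U$. By Tychonoff's theorem, $\U$ is compact, so finitely many of them, centred at $\by^{(1)},\dots,\by^{(K)}$, already cover $\U$. Hence \eqref{eq:uniforminverse} holds with $C_{\rm inv}:=2\max_kM(\by^{(k)})$.

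The main obstacle is the first step. The compactness argument is routine once \cref{cor:Lip} is available. What needs care is justifying pointwise invertibility of $\Aop_{\kappa,\Gamma_{\by}}$ on $\Hdiv{\Gamma_{\by}}$ from nonresonance alone. I would cite the Fredholm/injectivity theory of Buffa–Hiptmair and Nédélec for Lipschitz (hence $C^{1,1}$) boundaries. Along with the citation, I would check that our bilinear, $\Iop_\Gamma$-composed convention changes neither injectivity nor the index. This is clear, since $\Iop_\Gamma$ is an isomorphism, and the form differs from the classical one only by the factor $\mathrm i$ (\cref{prop:efieform}).
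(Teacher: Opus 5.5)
Your proof is correct and follows the paper's argument. Both go through pointwise invertibility from \cref{ass:invert} via the classical EFIE theory and \eqref{eq:inverse-transfer}, product-topology continuity from \cref{cor:Lip}, Neumann-series stability of inversion (\cref{lem:neumann}), and compactness of $\U$ by Tychonoff. The only difference is packaging. The paper notes that $\by\mapsto\norm{\widehat\Aop_{\by}^{-1}}$ is continuous on the compact set $\U$ and takes its maximum. You instead extract a finite subcover from explicit local bounds $2M(\by)$, which is the same mechanism the paper itself uses in \cref{lem:disc-uniform}.
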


\begin{proof}
By \cref{ass:invert} and the classical unique solvability of the EFIE at nonresonant frequencies~\cite{BuffaHiptmair2003,Nedelec}, $\Aop_{\kappa,\Gamma_{\by}}\in\cL_{\rm iso}(X(\Gamma_{\by}),
X(\Gamma_{\by})')$ for every $\by\in\U$, hence by
\eqref{eq:inverse-transfer} $\widehat\Aop_{\by}\in
\cL_{\rm iso}(\Xhat,\Xhat')$ for every $\by\in\U$. By \cref{cor:Lip}
the map $\by\mapsto\widehat\Aop_{\by}$ is continuous from $\U$ with the
product topology into $\cL(\Xhat,\Xhat')$. The set
$\cL_{\rm iso}(\Xhat,\Xhat')$ of isomorphisms is open in
$\cL(\Xhat,\Xhat')$, and $\Aop\mapsto\Aop^{-1}$ is continuous on it
(\cref{lem:neumann}); the composition
\[
\by\longmapsto\widehat\Aop_{\by}
\longmapsto\widehat\Aop_{\by}^{-1}
\longmapsto\norm{\widehat\Aop_{\by}^{-1}}_{\cL(\Xhat',\Xhat)}
\]
is therefore a continuous real-valued function on $\U$. Since
$\U=[-1,1]^\N$ is compact in the product topology by Tychonoff's
theorem, that function attains a finite maximum, which is
\eqref{eq:uniforminverse}.
\end{proof}

\begin{remark}[Where compactness of $\U$ is used]
\Cref{lem:uniforminverse} is the only use of compactness of $\U$ in the
continuous analysis. The same compactness argument reappears at the
discrete level in \cref{lem:disc-uniform}. The argument is qualitative:
it gives no control of $C_{\rm inv}$ in terms of the distance from
$\kappa^2$ to $\bigcup_{\by\in\U}\Sigma_E(D_{\by})$, so the bound
degenerates as a resonance is approached.
\end{remark}

\begin{lemma}[Complex stability of inversion]\label{lem:complexinverse}
Under \cref{ass:reference,ass:param,ass:invert} there exists
$\eps_{\rm inv}>0$ such that for every $(\bb,\eps_{\rm inv})$-admissible
$\brho$ and every $\bz\in\mathcal O_{\brho}$ one has
$\widehat\Aop_{\bz}\in\cL_{\rm iso}(\Xhat,\Xhat')$ with
\[
\sup_{\bz\in\mathcal O_{\brho}}
\norm{\widehat\Aop_{\bz}^{-1}}_{\cL(\Xhat',\Xhat)}\le2C_{\rm inv} ,
\]
and $\bz\mapsto\widehat\Aop_{\bz}^{-1}$ is holomorphic in every
coordinate and jointly holomorphic on finite-dimensional sections.
\end{lemma}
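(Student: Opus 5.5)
The plan is a Neumann-series perturbation around real parameters, driven by the segment Cauchy estimate of \cref{lem:segment}. Fix $\bz\in\mathcal O_{\brho}$ with $\brho$ being $(\bb,\eps_{\rm inv})$-admissible. As in the proof of \cref{lem:complex-separation}, choose $\by\in\U$ coordinatewise with $\abs{z_j-y_j}<\rho_j-1$, and set $\bw:=\bz-\by$, so that $\sum_jb_j\abs{w_j}\le\eps_{\rm inv}$. By \cref{lem:uniforminverse}, $\widehat\Aop_{\by}$ is an isomorphism with $\norm{\widehat\Aop_{\by}^{-1}}\le C_{\rm inv}$. It therefore suffices to show
\[
\norm{\widehat\Aop_{\bz}-\widehat\Aop_{\by}}_{\cL(\Xhat,\Xhat')}\le\frac{1}{2C_{\rm inv}} .
\]
Writing $\widehat\Aop_{\bz}=\widehat\Aop_{\by}\big(\Id+\widehat\Aop_{\by}^{-1}(\widehat\Aop_{\bz}-\widehat\Aop_{\by})\big)$, the Neumann series (\cref{lem:neumann}) then gives invertibility with $\norm{\widehat\Aop_{\bz}^{-1}}\le C_{\rm inv}/(1-\tfrac12)=2C_{\rm inv}$.

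For the perturbation bound I would apply \cref{lem:segment} to the segment $\bz_t=\by+t\bw$, $t\in[0,1]$, mimicking the budget allocation in \cref{cor:Lip}. Take $\sigma:=\eps_A/(2\eps_{\rm inv})$ and an auxiliary radius sequence $\rho'_j-1:=(1+\sigma)\abs{w_j}+\eta_j$ with $\sum_jb_j\eta_j\le\eps_A/4$. Then
\[
\sum_jb_j(\rho'_j-1)\le(1+\sigma)\eps_{\rm inv}+\eps_A/4\le\eps_A
\]
as soon as $\eps_{\rm inv}\le\eps_A/4$. For $\abs\tau<\sigma$, the point $\by+(t+\tau)\bw$ has $j$th coordinate within $(t+\abs\tau)\abs{w_j}<\rho'_j-1$ of $y_j\in[-1,1]$, so the discs required by \cref{lem:segment} lie in the $(\bb,\eps_A)$-admissible tube $\mathcal O_{\brho'}$. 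This yields
\[
\norm{\widehat\Aop_{\bz}-\widehat\Aop_{\by}}\le\frac{C_A}{\sigma}=\frac{2C_A\eps_{\rm inv}}{\eps_A}.
\]
Choosing $\eps_{\rm inv}\le\min\{\eps_A/4,\ \eps_A/(4C_AC_{\rm inv})\}$ makes the right-hand side at most $1/(2C_{\rm inv})$. One point to check is that $\widehat\Aop_{\bz}$ as defined by \eqref{eq:complexEFIE} does not depend on which tube it is viewed in. This holds because the form is given intrinsically in $\bz$, so the values on $\mathcal O_{\brho}$ and $\mathcal O_{\brho'}$ agree.

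Holomorphy of $\bz\mapsto\widehat\Aop_{\bz}^{-1}$ follows from \cref{thm:operator}, which gives coordinatewise holomorphy and joint holomorphy on finite-dimensional sections of $\bz\mapsto\widehat\Aop_{\bz}$, composed with the inversion map. Inversion is holomorphic on the open set $\cL_{\rm iso}(\Xhat,\Xhat')$, being locally a convergent Neumann series, and the first step shows that $\widehat\Aop_{\bz}$ stays in that set throughout $\mathcal O_{\brho}$. The uniform bound by $2C_{\rm inv}$ holds on all of $\mathcal O_{\brho}$, hence also on $\mathcal E_{\brho}$ by \eqref{eq:EinO}.

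The main obstacle is the admissibility bookkeeping in the second step. The discs around the segment must fit inside some admissible tube even though $\bw$ may have infinitely many nonzero coordinates. We also need $\rho'_j>1$ for every $j$, which is why the slack $\eta_j$ is included. Finally, the constants must be uniform in $\bz$, which they are, since $\sigma$ depends only on $\eps_A$ and $\eps_{\rm inv}$.
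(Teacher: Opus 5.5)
Your proposal is correct and is essentially the paper's proof: a Neumann-series perturbation about the real point $\by$, where $\norm{\widehat\Aop_{\bz}-\widehat\Aop_{\by}}$ is bounded by \cref{lem:segment} on the auxiliary $(\bb,\eps_A)$-admissible radii $(1+\sigma)\abs{w_j}+\eta_j$. The only difference is that you fix $\sigma=\eps_A/(2\eps_{\rm inv})$ instead of the paper's $\sigma=\eps_A/(2S)$; this loses nothing and spares you the separate treatment of the case $S=0$.
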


\begin{proof}
Choose first $\eps_{\rm inv}\le\eps_A/4$ and let $\brho$ be
$(\bb,\eps_{\rm inv})$-admissible, $\bz\in\mathcal O_{\brho}$.
Decompose $\bz=\by+\bw$ as in \cref{lem:complex-separation}, so that
$\by\in\U$ and $S:=\sum_j\abs{w_j}b_j\le\eps_{\rm inv}$. If $S=0$ then
$\bw=\mathbf 0$, so $\bz=\by\in\U$ and the assertion is
\cref{lem:uniforminverse}; assume therefore $S>0$. Each factor
$\mathcal O_{\rho_j}$ is convex and contains both $y_j$ and $z_j$, so
every point $\bm\zeta=\by+t\bw$, $0\le t\le1$, satisfies
$\sum_jb_j\abs{\zeta_j-y_j}\le S\le\eps_{\rm inv}\le\eps_A/4$. The
additional admissibility margin prevents the available Cauchy radius
from degenerating near the boundary of the original tube. Set $\sigma:=\eps_A/(2S)$ and define an auxiliary radius sequence
$\widetilde\brho$ by $\widetilde\rho_j-1:=(1+\sigma)\abs{w_j}+\eta_j$ with $\eta_j>0$
chosen so that $\sum_jb_j\eta_j\le\eps_A/4$. Then
\[
\sum_{j\ge1}b_j(\widetilde\rho_j-1)
=
S+\sigma S+\sum_{j\ge1}b_j\eta_j
\le
\frac{\eps_A}{4}+\frac{\eps_A}{2}+\frac{\eps_A}{4}
=\eps_A ,
\]
so $\widetilde\brho$ is $(\bb,\eps_A)$-admissible, and for $t\in[0,1]$,
$\abs\tau<\sigma$ the point $\by+(t+\tau)\bw$ lies at distance at most
$(1+\sigma)\abs{w_j}<\widetilde\rho_j-1$ from $[-1,1]$ in the $j$th
coordinate.
\Cref{lem:segment} then gives
\[
\norm{\widehat\Aop_{\bz}-\widehat\Aop_{\by}}
\le
\frac{C_A}{\sigma}
=
\frac{2C_A}{\eps_A}\,S
\le
C\eps_{\rm inv} ,
\qquad C:=\frac{2C_A}{\eps_A} .
\]
Choosing
\[
\eps_{\rm inv}
:=\min\Big\{\frac{\eps_A}{4},\ \frac1{2CC_{\rm inv}}\Big\}
\]
and invoking \cref{lem:uniforminverse},
\[
\norm{\widehat\Aop_{\by}^{-1}
\big(\widehat\Aop_{\bz}-\widehat\Aop_{\by}\big)}
\le
C_{\rm inv}C\eps_{\rm inv}\le\tfrac12 ,
\]
so \cref{lem:neumann} yields invertibility of $\widehat\Aop_{\bz}$
together with
$\norm{\widehat\Aop_{\bz}^{-1}}\le2C_{\rm inv}$. Holomorphy of
$\bz\mapsto\widehat\Aop_{\bz}^{-1}$ follows from
\cref{thm:operator} and holomorphy of inversion on the open set of
Banach space isomorphisms.
\end{proof}

\begin{theorem}[Parametric holomorphy of the surface current]
\label{thm:current}
Under \cref{ass:reference,ass:param,ass:invert} there exists
$\eps_{\rm sol}>0$ such that the unique solution of the pulled-back EFIE
\begin{equation}\label{eq:pullback-efie}
\widehat\Aop_{\by}\widehat\jj_{\by}=\widehat f_{\by}
\quad\text{in }\Xhat' ,
\qquad\by\in\U,
\end{equation}
defines a $(\bb,p,\eps_{\rm sol})$-holomorphic map
$\U\ni\by\mapsto\widehat\jj_{\by}\in\Xhat$. Moreover
$\widehat\jj_{\by}=\Pop_{\by}^{-1}\jj_{\by}$, where $\jj_{\by}\in
X(\Gamma_{\by})$ is the physical surface current on $\Gamma_{\by}$.
\end{theorem}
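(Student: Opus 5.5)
The plan is to define the holomorphic extension of the current directly as $\widehat\jj_{\bz}:=\widehat\Aop_{\bz}^{-1}\widehat f_{\bz}$ for $\bz\in\mathcal O_{\brho}$, and to take $\eps_{\rm sol}:=\min\{\eps_{\rm inv},\eps_f\}$. Then every $(\bb,\eps_{\rm sol})$-admissible $\brho$ is simultaneously $(\bb,\eps_{\rm inv})$- and $(\bb,\eps_f)$-admissible. On that tube, \cref{lem:complexinverse} provides $\widehat\Aop_{\bz}\in\cL_{\rm iso}(\Xhat,\Xhat')$ with $\norm{\widehat\Aop_{\bz}^{-1}}\le 2C_{\rm inv}$. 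It also gives holomorphy of $\bz\mapsto\widehat\Aop_{\bz}^{-1}$ in each coordinate and jointly on finite-dimensional sections. \Cref{lem:data} provides the holomorphic extension $\bz\mapsto\widehat f_{\bz}\in\Xhat'$ of \eqref{eq:data-complex}, with a bound $C_f$ that is uniform over the tube. I expect the proof of \cref{lem:data} to give this bound on $\mathcal O_{\brho}$ itself, since $\rr_{\bz}$ stays in a bounded complex set there. The evaluation map $\cL(\Xhat',\Xhat)\times\Xhat'\to\Xhat$ is continuous and bilinear, so the product of the two holomorphic maps is holomorphic in each coordinate and on every finite-dimensional section. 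It also satisfies
\[
\sup_{\bz\in\mathcal O_{\brho}}\norm{\widehat\jj_{\bz}}_{\Xhat}\le 2C_{\rm inv}C_f ,
\]
which is independent of $\brho$. By \eqref{eq:EinO} this bound in particular holds on $\mathcal E_{\brho}$.

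Next I will check that the extension is consistent at real parameters and that the real restriction is continuous. For $\bz=\by\in\U$, \cref{thm:operator} identifies $\widehat\Aop_{\by}$ with the Piola pullback \eqref{eq:pullback-def}. Likewise \eqref{eq:data-pull}, together with the plane-wave form \eqref{eq:plane}, shows that \eqref{eq:data-complex} restricts to $\widehat f_{\by}=\Pop_{\by}'f_{\Gamma_{\by}}$. Hence $\widehat\jj_{\by}$ is the unique solution of \eqref{eq:pullback-efie}. For continuity on $\U$ with the product topology, I will combine the identity
\[
\widehat\Aop_{\by}^{-1}-\widehat\Aop_{\widetilde\by}^{-1}=\widehat\Aop_{\by}^{-1}\bigl(\widehat\Aop_{\widetilde\by}-\widehat\Aop_{\by}\bigr)\widehat\Aop_{\widetilde\by}^{-1}
\]
with \cref{cor:Lip} and \cref{lem:uniforminverse}. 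The data term I will treat by an analogous Lipschitz estimate. That estimate comes either from the Cauchy-segment argument of \cref{lem:segment} applied to $\widehat f_{\bz}$, or directly from the fact that $\rr_{\by}$ and $F_{\by}$ depend Lipschitz-continuously on $\by$ in $W^{1,\infty}$ with weights $b_j$. Together these give
\[
\norm{\widehat\jj_{\by}-\widehat\jj_{\widetilde\by}}_{\Xhat}\le C\sum_jb_j\abs{y_j-\widetilde y_j},
\]
and product-topology continuity then follows as in the last step of \cref{cor:Lip}.

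Finally, for the identification with the physical current, let $\jj_{\by}\in X(\Gamma_{\by})$ solve $\Aop_{\kappa,\Gamma_{\by}}\jj_{\by}=f_{\Gamma_{\by}}$. This solution exists and is unique by \cref{ass:invert}. Apply $\Pop_{\by}'$ to this equation and insert $\Pop_{\by}\Pop_{\by}^{-1}$ on the left. This gives $\widehat\Aop_{\by}(\Pop_{\by}^{-1}\jj_{\by})=\widehat f_{\by}$. By uniqueness, $\widehat\jj_{\by}=\Pop_{\by}^{-1}\jj_{\by}$.

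The main obstacle is bookkeeping rather than analysis. I must make sure that the two admissibility thresholds are compatible and that the data bound really is uniform in $\brho$. If \cref{lem:data} only gave the data bound on $\mathcal E_{\brho}$, I would instead use it directly there, which is all that \cref{def:hol} requires. The Lipschitz continuity of the data needs a short separate check; the Cauchy-segment route avoids redoing any estimates by hand.
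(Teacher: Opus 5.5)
Your proposal is correct and follows the paper's proof essentially verbatim. The extension is $\widehat\jj_{\bz}:=\widehat\Aop_{\bz}^{-1}\widehat f_{\bz}$, with holomorphy and the bound $2C_{\rm inv}\sup_{\bz}\norm{\widehat f_{\bz}}_{\Xhat'}$ obtained from \cref{lem:complexinverse}, \cref{lem:data} and continuity of the bilinear evaluation map. The identification $\widehat\jj_{\by}=\Pop_{\by}^{-1}\jj_{\by}$ follows from \eqref{eq:pullback-def} and $\widehat f_{\by}=\Pop_{\by}'f_{\Gamma_{\by}}$. Dropping $\eps_A$ from the threshold is harmless, since $\eps_{\rm inv}\le\eps_A/4$.

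Your extra Lipschitz estimate for $\by\mapsto\widehat\jj_{\by}$, via the resolvent identity, \cref{cor:Lip} and \cref{lem:uniforminverse}, is a useful addition. It supplies the continuity on $\U$ that \cref{def:hol} requires and that the paper's proof leaves implicit.
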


\begin{proof}
Take $\eps_{\rm sol}\le\min\{\eps_A,\eps_f,\eps_{\rm inv}\}$. For every $(\bb,\eps_{\rm sol})$-admissible radius sequence, both
$\bz\mapsto\widehat\Aop_{\bz}^{-1}$ and $\bz\mapsto\widehat f_{\bz}$
are holomorphic on the corresponding tube and uniformly bounded there,
by \cref{lem:complexinverse} and \cref{lem:data}. The evaluation map
$\cL(\Xhat',\Xhat)\times\Xhat'\to\Xhat$ is continuous and bilinear,
hence $\widehat\jj_{\bz}=\widehat\Aop_{\bz}^{-1}\widehat f_{\bz}$ is
holomorphic with
$\sup_{\bz}\norm{\widehat\jj_{\bz}}_{\Xhat}\le
2C_{\rm inv}\sup_{\bz}\norm{\widehat f_{\bz}}_{\Xhat'}$. The last
statement follows from \eqref{eq:pullback-def} and
$\widehat f_{\by}=\Pop_{\by}'f_{\Gamma_{\by}}$.
\end{proof}

\section{Dimension-robust approximation of the EFIE operator and
solution}\label{sec:consequences}

\subsection{Mixed parametric derivatives and sparse polynomial
approximation}\label{sec:derivatives}

Let
$\mathcal F:=\{\bnu=(\nu_j)_{j\ge1}\in\N_0^\N:\abs{\supp\bnu}<\infty\}$
and write $\bnu!=\prod_j\nu_j!$, $\abs{\bnu}=\sum_j\nu_j$ and
$\bbeta^{\bnu}=\prod_j\beta_j^{\nu_j}$.

\begin{theorem}[Anisotropic Cauchy estimates]\label{thm:cauchy}
There exists a positive sequence $\bbeta=(\beta_j)_{j\ge1}$ with
$\beta_j\le Cb_j$, and constants $C_A,C_{\jj}$, such that for every
$\bnu\in\mathcal F$
\begin{equation}\label{eq:derivative-op}
\sup_{\by\in\U}
\norm{\partial_{\by}^{\bnu}\widehat\Aop_{\by}}_{\cL(\Xhat,\Xhat')}
\le
C_A\,\abs{\bnu}!\,\bbeta^{\bnu} ,
\end{equation}
and, under \cref{ass:invert},
\begin{equation}\label{eq:derivative-current}
\sup_{\by\in\U}
\norm{\partial_{\by}^{\bnu}\widehat\jj_{\by}}_{\Xhat}
\le
C_{\jj}\,\abs{\bnu}!\,\bbeta^{\bnu} .
\end{equation}
The factorial here is $\abs{\bnu}!$ and not $\bnu!$. The combinatorial
factor produced by optimizing the Cauchy radii under the budget
\eqref{eq:admbudget} cannot be absorbed while retaining $\bnu!$; see
\cref{app:cauchy}.
\end{theorem}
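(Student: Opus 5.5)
The plan is to apply the multivariate Cauchy integral formula on polydiscs centred at real parameters, with radii optimized under the admissibility budget \eqref{eq:admbudget}. Fix $\bnu\in\mathcal F$ with support $S$ and $\by\in\U$. For $j\in S$ we pick radii $r_j>0$ with $\sum_{j\in S}b_jr_j\le\eps/2$, where $\eps:=\eps_A$ for the operator and $\eps:=\eps_{\rm sol}$ for the current. The closed polydisc $\{\bz:\abs{z_j-y_j}\le r_j,\ j\in S;\ z_j=y_j,\ j\notin S\}$ then lies in $\mathcal O_{\brho}$ for the radius sequence $\rho_j-1:=r_j+\eta_j$ ($j\in S$) and $\rho_j-1:=\eta_j$ ($j\notin S$). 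Here $\eta_j>0$ is chosen with $\sum_jb_j\eta_j\le\eps/2$, for instance $\eta_j=\eps2^{-j-1}/b_j$, exactly as in the proof of \cref{cor:Lip}. This $\brho$ is $(\bb,\eps)$-admissible. By \cref{thm:operator}, $\bz\mapsto\widehat\Aop_{\bz}$ is jointly holomorphic on the finite-dimensional section through $\by$ in the coordinates $S$, and it is bounded by $C_A$ on $\mathcal O_{\brho}$ by \eqref{eq:CA}. The Cauchy formula in $\abs S$ variables, applied in the Banach space $\cL(\Xhat,\Xhat')$ (or first to the scalars $\lambda(\widehat\Aop_{\bz}\widehat\jj)$ and then taking suprema), gives
\[
\norm{\partial^{\bnu}_{\by}\widehat\Aop_{\by}}\le C_A\,\bnu!\prod_{j\in S}r_j^{-\nu_j}.
\]

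Next we optimize the radii. We take $r_j:=\frac{\eps}{2}\frac{\nu_j}{\abs{\bnu}b_j}$ for $j\in S$, which exhausts the budget $\sum_{j\in S}b_jr_j=\eps/2$. This yields
\[
\bnu!\prod_j r_j^{-\nu_j}=\bnu!\,\frac{\abs{\bnu}^{\abs{\bnu}}}{\prod_j\nu_j^{\nu_j}}\prod_j\Big(\frac{2b_j}{\eps}\Big)^{\nu_j}.
\]
We then use the elementary inequalities $n^n\le e^n n!$ and $\nu_j!\le\nu_j^{\nu_j}$ to get $\bnu!\,\abs{\bnu}^{\abs{\bnu}}/\prod_j\nu_j^{\nu_j}\le e^{\abs{\bnu}}\abs{\bnu}!$. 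Setting $\beta_j:=2e\,b_j/\eps$ gives \eqref{eq:derivative-op} with $\beta_j\le Cb_j$. The constant $C_A$ and the sequence $\bbeta$ are independent of $\by$ and $\bnu$, so the supremum over $\U$ follows. This optimization step is where $\abs{\bnu}!$ rather than $\bnu!$ necessarily appears. It is the delicate point, and \cref{app:cauchy} is cited for it; the essential issue is checking that the multinomial-type factor is absorbed by the geometric factor $e^{\abs{\bnu}}$.

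For the current, we repeat the same argument verbatim with $\eps=\eps_{\rm sol}$. It uses that $\bz\mapsto\widehat\jj_{\bz}=\widehat\Aop_{\bz}^{-1}\widehat f_{\bz}$ is jointly holomorphic on finite-dimensional sections and bounded on $\mathcal O_{\brho}$ by $2C_{\rm inv}\sup\norm{\widehat f_{\bz}}_{\Xhat'}$, which holds by \cref{thm:current}, \cref{lem:complexinverse} and \cref{lem:data}. This gives \eqref{eq:derivative-current} with $C_{\jj}$ equal to that bound and $\beta_j=2eb_j/\eps_{\rm sol}$. To obtain a single sequence $\bbeta$ for both estimates we take the larger one, $\beta_j=2eb_j/\min\{\eps_A,\eps_{\rm sol}\}$. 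The only remaining technicality is the requirement $\rho_j>1$ off the support, which is handled by the $\eta_j$ term. The main obstacle is the combinatorial absorption, and it is elementary.
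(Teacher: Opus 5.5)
Your proposal is correct and is essentially the paper's proof in \cref{app:cauchy}. It uses the same ingredients: the uniform tube bounds from \eqref{eq:CA} and \cref{lem:complexinverse,lem:data}; the same budget split, reserving $\eps\,2^{-j-1}/b_j$ for inactive coordinates; the same radii $\eps\nu_j/(2b_j\abs{\bnu})$ on the support; and the same inequalities $\bnu!\le\prod_j\nu_j^{\nu_j}$ and $n^n\le e^n n!$, giving $\beta_j=2e\,b_j/\eps$. Your additional margin $\eta_j$ on the active coordinates (placing the closed polydisc strictly inside the open tube) and the common choice $\beta_j=2e\,b_j/\min\{\eps_A,\eps_{\rm sol}\}$ are harmless refinements of details the paper leaves implicit.
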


\begin{proof}
Both families satisfy the uniform \emph{tube} bound
\eqref{eq:tubebound} required by \cref{app:cauchy}, which is stronger
than what \cref{def:hol} asks: for $\widehat\Aop_{\bz}$ this is
\eqref{eq:CA}, and for $\widehat\jj_{\bz}$ it follows from
\cref{lem:complexinverse} and \cref{lem:data} as in the proof of
\cref{thm:current}, both bounds being stated on
$\mathcal O_{\brho}$. Applying the multivariate Cauchy estimate of
\cref{app:cauchy} on coordinate discs contained in
$\mathcal O_{\brho}$, and optimizing the radii subject to
\eqref{eq:admbudget}, gives
\eqref{eq:derivative-op}--\eqref{eq:derivative-current}.
\end{proof}

The estimates quantify the anisotropy of the parametric dependence.
They are not used to derive the coefficient summability below; for that
purpose we apply the polyellipse-holomorphy result directly, in
\cref{prop:summability}. The coefficient estimates are instead obtained from the polyellipse theory of Refs.~\refcite{CohenDeVoreSchwab2011},~\refcite{ChkifaCohenSchwab2015},~\refcite{CohenDeVore2015} and~\refcite{ZechSchwab2020}, summarized next.

The Bernstein-polyellipse bounds in \cref{def:hol} naturally lead to
tensor-product Legendre expansions, which we use in two
normalizations. Let $(P_n)_{n\ge0}$ be the univariate
Legendre polynomials normalized by $\norm{P_n}_{L^\infty([-1,1])}=1$,
put $P_{\bnu}(\by):=\prod_jP_{\nu_j}(y_j)$, so that
\begin{equation}\label{eq:Pnorm}
\norm{P_{\bnu}}_{L^\infty(\U)}=1
\qquad\text{for every }\bnu\in\mathcal F ,
\end{equation}
and let $L_{\bnu}:=\prod_j\sqrt{2\nu_j+1}\,P_{\nu_j}$ be the
corresponding family, orthonormal in $L^2(\U,\mu)$ with $\mu$ the
uniform product probability measure. Write $p_{\bnu}$ and $l_{\bnu}$
for the coefficients of $u$ in the two families.

\begin{remark}[Why not a Taylor expansion about the origin]
\label{rem:notaylor}
A Taylor expansion about the origin is not justified under
\cref{def:hol}. The tube $\mathcal O_{\rho_j}$ has width $\rho_j-1$
about $[-1,1]$, and the admissibility budget \eqref{eq:admbudget}
forces $\rho_j-1<1$ as soon as $b_j\ge\eps$. Holomorphy on such a thin
neighbourhood does not make the Taylor series about $0$ converge at
$y_j=\pm1$, since the radius of convergence is governed by the nearest
complex singularity to the origin rather than by continuation along the
real interval. The polydisc formulations of $(\bb,\eps)$-holomorphy used
for Taylor gPC expansions, as in Sec.~3 of Ref.~\refcite{ZechSchwab2020}, require extensions to origin-centred polydiscs
$\prod_j\{\abs{z_j}<\rho_j\}$ with $\rho_j>1$, a genuinely stronger
hypothesis than \cref{def:hol}. The geometry of \cref{sec:geometry}
produces tubes rather than such polydiscs, so we use Legendre
expansions, the system associated with the polyellipse bounds in
\cref{def:hol}.
\end{remark}

\begin{proposition}[Legendre summability and unconditional convergence]
\label{prop:summability}
Let $Y$ be a complex Banach space, $\bb\in\ell^p(\N)$ with $0<p<1$, and
let $u:\U\to Y$ be $(\bb,p,\eps)$-holomorphic in the sense of
\cref{def:hol}. Then the $L^\infty$-normalized coefficient sequence
$(\norm{p_{\bnu}}_Y)$ belongs to the \emph{monotone} class
\[
\ell^p_m(\mathcal F)
:=\Big\{(c_{\bnu}):\
\big(\widehat c_{\bnu}\big)\in\ell^p(\mathcal F),\
\widehat c_{\bnu}:=\sup_{\bmu\ge\bnu}c_{\bmu}\Big\}
\subset\ell^p(\mathcal F),
\]
and $u=\sum_{\bnu}p_{\bnu}P_{\bnu}$ unconditionally in
$L^\infty(\U;Y)$. The same holds for the orthonormal coefficients
$(\norm{l_{\bnu}}_Y)$, and
$u=\sum_{\bnu}l_{\bnu}L_{\bnu}$ unconditionally in $L^\infty(\U;Y)$.
\end{proposition}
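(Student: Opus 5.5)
The plan is to reduce the vector-valued statement to the standard Cohen--DeVore--Schwab / Chkifa--Cohen--Schwab polyellipse argument. The ingredients are a coefficient bound from a Cauchy integral on polyellipses, a choice of radii adapted to $\bnu$ under the budget \eqref{eq:admbudget}, and an $\ell^p$ summation argument.

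\textbf{Step 1: coefficient bound.} Fix $\bnu\in\mathcal F$ with support $S$. For any $(\bb,\eps)$-admissible $\brho$, freeze all coordinates outside $S$ at real values $\by_{S^c}$. The map $\bz_S\mapsto u(\bz_S,\by_{S^c})$ is jointly holomorphic on the finite-dimensional section, by \cref{def:hol}, and bounded by $C_\eps$ on $\mathcal E_{\brho}$. The univariate estimate for Legendre coefficients of a function holomorphic inside a Bernstein ellipse of radius $\rho$ (via the Chebyshev/Joukowski contour representation) gives $|p_n|\le C\,\rho^{-n}\cdot\rho/(\rho-1)$ in the $L^\infty$ normalization. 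Since the coefficient functional is a Bochner integral against $P_{\bnu}$, it applies verbatim in $Y$. Integrating over $\by_{S^c}$ and tensorizing yields
\[
\norm{p_{\bnu}}_Y\le C_\eps\prod_{j\in S}\varphi(\rho_j)\,\rho_j^{-\nu_j},
\qquad \varphi(\rho)=\tfrac{c\rho}{\rho-1}.
\]
For $L_{\bnu}$ there is an extra factor $\prod_j(2\nu_j+1)^{3/2}$ or so, which is polynomial and harmless.

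\textbf{Step 2: radii depending on $\bnu$.} Choose $J$ with $\sum_{j>J}b_j\le\eps/4$ (using $\bb\in\ell^1$). For $j>J$ set $\rho_j=\kappa_0+\eps\nu_j/(4|\bnu_{>J}|b_j)$ with some fixed $\kappa_0>1$, and for $j\le J$ take a fixed $\rho_j=\kappa_0'>1$ within the remaining budget. This is the standard construction of Chkifa--Cohen--Schwab and Cohen--DeVore--Schwab. It gives
\[
\norm{p_{\bnu}}_Y\le C\,\eta^{|\bnu_{\le J}|}\,\frac{|\bnu|^{|\bnu|}}{\bnu^{\bnu}}\,\bd^{\bnu}
\quad\text{with }\bd\in\ell^p,\ \eta<1,
\]
after absorbing the $\varphi(\rho_j)$ factors into a geometric decay using $\kappa_0$ large relative to $\eta$ (possibly after the usual $b_j\to b_j/\theta$ adjustments). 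Equivalently $|\bnu|!/\bnu!$ appears by Stirling, up to factors $e^{|\bnu|}$ compensated by choosing the constants in $\bd$.

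\textbf{Step 3: summability and the monotone class.} The bound obtained is monotone nonincreasing in $\bnu$ coordinatewise once $\bd$ is replaced by a nonincreasing rearrangement, or directly, since each factor $d_j\le1$ after rescaling. Hence it dominates $\widehat c_{\bnu}$. The $\ell^p$ summability of $\big(\frac{|\bnu|!}{\bnu!}\bd^{\bnu}\big)$ for $\bd\in\ell^p$ with $\norm{\bd}_{\ell^1}<1$ is the classical lemma of Cohen--DeVore--Schwab. The finite block $j\le J$ contributes a geometric series. Polynomial factors from the orthonormal normalization are absorbed by slightly shrinking $\eta$ and $\bd$. Unconditional convergence in $L^\infty(\U;Y)$ then follows because $\sum\norm{p_{\bnu}}_Y\norm{P_{\bnu}}_{L^\infty}=\sum\norm{p_{\bnu}}_Y<\infty$ by \eqref{eq:Pnorm}. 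The same holds for $L_{\bnu}$ using $\norm{L_{\bnu}}_{L^\infty}=\prod\sqrt{2\nu_j+1}$. The limit equals $u$ because the series converges in $L^2(\U,\mu;Y)$ to $u$ (continuity of $u$ gives $u\in L^2$), and the $L^\infty$ and $L^2$ limits coincide.

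\textbf{Main obstacle.} The delicate step is Step 2. The radii must keep $\rho_j>1$ for \emph{all} $j$ while staying in the budget, which requires the $\kappa_0$ margin on a tail with small $b_j$. The factors $\rho_j/(\rho_j-1)$ must also not destroy summability. I would first follow Chkifa--Cohen--Schwab (2015), Sec.~2, essentially verbatim, checking only that their hypothesis matches \cref{def:hol}, namely a polyellipse bound with admissibility $\sum b_j(\rho_j-1)\le\eps$, which the paper states matches Definition~2.6 of Dolz--Henr\'iquez.
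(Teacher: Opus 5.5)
Your overall route is the paper's. The paper's proof is only an import: it invokes Theorem~2.2 of Chkifa--Cohen--Schwab, after noting that \cref{def:hol} is their hypothesis $\mathrm{HA}(p,\eps)$ verbatim and that the tubes contain the closed Bernstein ellipses by \eqref{eq:EinO}. It then gets the orthonormal statement for free from $L_{\bnu}=c_{\bnu}P_{\bnu}$ with $c_{\bnu}=\prod_j\sqrt{2\nu_j+1}\ge1$, so $l_{\bnu}L_{\bnu}=p_{\bnu}P_{\bnu}$ termwise and $\norm{l_{\bnu}}_Y\le\norm{p_{\bnu}}_Y$. Citing that theorem, as your last paragraph proposes, is enough.

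The self-contained argument you sketch, however, has a genuine gap in Step~3, exactly at the monotone class the paper needs for downward closed index sets. The majorant $\frac{\abs{\bnu}!}{\bnu!}\mathbf d^{\bnu}$ is \emph{not} coordinatewise nonincreasing when $d_j\le1$. Raising $\nu_j$ from $0$ to $1$ multiplies it by $(\abs{\bnu}+1)d_j$, which exceeds $1$ once $\abs{\bnu}$ is large, however small $d_j$ is. So ``hence it dominates $\widehat c_{\bnu}$'' is unjustified. One repair: take $\mathbf d\ge0$ supported on the tail and set $\delta:=\norm{\mathbf d}_{\ell^1}$. The inequality $\bnu!\,\bm\lambda!\le(\bnu+\bm\lambda)!$, the multinomial theorem and $\sum_{n\ge0}\binom{k+n}{k}\delta^n=(1-\delta)^{-k-1}$ give $\sup_{\bmu\ge\bnu}\frac{\abs{\bmu}!}{\bmu!}\mathbf d^{\bmu}\le(1-\delta)^{-1}\frac{\abs{\bnu}!}{\bnu!}\big(\mathbf d/(1-\delta)\big)^{\bnu}$. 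This is $\ell^p$-summable by the Cohen--DeVore--Schwab lemma once $\delta<1/2$, which you arrange by enlarging $J$. Alternatively, keep the infimum over admissible radii instead of specializing to $\bnu$ first, and take the tail radii above a fixed threshold $R$. Then each one-dimensional factor, equal to $1$ for $m=0$ and to $\varphi(\rho)(2m+1)\rho^{-m}$ for $m\ge1$, is nonincreasing in $m$, and the envelope obeys the same bound.

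There are three smaller errors, none of which changes the conclusion.
\begin{enumerate}
\item The polynomial factor belongs to the other normalization. With $M:=\sup_{\mathcal E_\rho}\abs{u}$, the Chebyshev argument gives $\abs{p_n}\le2(2n+1)\frac{\rho}{\rho-1}\rho^{-n}M$ for the $L^\infty$-normalized coefficients. Some growth in $n$ is unavoidable there: for the Chebyshev polynomial $u=T_N$ one has $\abs{p_N}\rho^N/M\sim\sqrt{\pi N}$. The orthonormal coefficients are smaller by $\sqrt{2n+1}$, not larger.
\item In Step~2, a large tail margin $\kappa_0$ is legitimate only if $J$ is chosen after $\kappa_0$, so that $(\kappa_0-1)\sum_{j>J}b_j\le\eps/4$. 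Fixing $J$ first by $\sum_{j>J}b_j\le\eps/4$, as you do, leaves no room for a large $\kappa_0$ within the budget \eqref{eq:admbudget}.
\item For a general Banach space $Y$ there is no Parseval identity, so convergence of the Legendre series to $u$ in $L^2(\U,\mu;Y)$ is not available a priori. Instead, identify the absolutely convergent sum $v$ with $u$ by applying each $\lambda\in Y'$. Then use completeness of the scalar Legendre system in $L^2(\U,\mu)$, continuity of $u$ and $v$ on $\U$, full support of $\mu$, and Hahn--Banach.
\end{enumerate}
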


This follows from Theorem~2.2 of Ref.~\refcite{ChkifaCohenSchwab2015}, which supplies
both normalizations directly under hypotheses that \cref{def:hol}
satisfies verbatim: their $\mathrm{HA}(p,\eps)$ asks for a uniformly
bounded map admitting, for every $\brho$ with $\rho_j>1$ and
$\sum_j(\rho_j-1)b_j\le\eps$, a coordinatewise holomorphic extension to
a product of open sets containing the Bernstein ellipses, with
$\sup_{\bz\in\mathcal E_{\brho}}\norm{u(\bz)}\le C_\eps$. The tubes in
\eqref{eq:tube} provide such neighbourhoods and contain the Bernstein
ellipses by \eqref{eq:EinO}.

For the orthonormal statement no further import is needed. With
$c_{\bnu}:=\prod_{j}\sqrt{2\nu_j+1}\ge1$ one has
$L_{\bnu}=c_{\bnu}P_{\bnu}$, hence
\begin{equation}\label{eq:legnorm}
l_{\bnu}L_{\bnu}=p_{\bnu}P_{\bnu},
\qquad
l_{\bnu}=c_{\bnu}^{-1}p_{\bnu},
\qquad
\norm{l_{\bnu}}_Y\le\norm{p_{\bnu}}_Y .
\end{equation}
The two expansions have identical summands, so unconditional
$L^\infty(\U;Y)$ convergence of one is that of the other; and since
$\sup_{\bmu\ge\bnu}\norm{l_{\bmu}}_Y
\le\sup_{\bmu\ge\bnu}\norm{p_{\bmu}}_Y$, the monotone envelope of
$(\norm{l_{\bnu}}_Y)$ is dominated by that of $(\norm{p_{\bnu}}_Y)$ and
lies in $\ell^p(\mathcal F)$ with it. The Banach-valued character of the
argument is thus confined to the single imported statement for
$p_{\bnu}$. Related coefficient estimates and polyellipse frameworks appear in Refs.~\refcite{CohenDeVoreSchwab2011},~\refcite{CohenDeVore2015},~\refcite{ZechSchwab2020},~\refcite{JerezSchwabZech2017} and~\refcite{DolzHenriquez2024}.

Two features of \cref{prop:summability} are used below: unconditional
$L^\infty(\U;Y)$ convergence, which identifies the sum of the series
with $u$ on all of $\U$, and monotone-envelope membership, which permits
downward closed index sets.

\Cref{prop:summability} is not a formal consequence of the pointwise
Cauchy bounds of \cref{thm:cauchy}; the obstruction is the factor
$\abs{\bnu}!/\bnu!$ produced by resummation, and is discussed at the end
of \cref{app:cauchy}.

\begin{theorem}[Best $N$-term Legendre approximation]\label{thm:sparse}
Let $Y$ be a complex Banach space and let $u:\U\to Y$ be
$(\bb,p,\eps)$-holomorphic with $\bb\in\ell^p(\N)$, $0<p<1$; by
\cref{thm:operator,thm:current} this covers $u(\by)=\widehat\Aop_{\by}$
with $Y=\cL(\Xhat,\Xhat')$ and $u(\by)=\widehat\jj_{\by}$ with
$Y=\Xhat$. Then there are nested, \emph{downward closed} sets
$\Lambda_N\subset\mathcal F$ with $\abs{\Lambda_N}=N$ such that
\begin{equation}\label{eq:Nterm}
\sup_{\by\in\U}
\Big\|u(\by)-\sum_{\bnu\in\Lambda_N}p_{\bnu}P_{\bnu}(\by)\Big\|_Y
\le C\,N^{-(1/p-1)} ,
\end{equation}
and, if $Y$ is in addition a Hilbert space, nested downward closed sets
$\Lambda_N'$ with $\abs{\Lambda_N'}=N$ and
\begin{equation}\label{eq:Nterm-leg}
\Big\|u-\sum_{\bnu\in\Lambda_N'}l_{\bnu}L_{\bnu}\Big\|_{L^2(\U,\mu;Y)}
\le C\,N^{-(1/p-1/2)} .
\end{equation}
Both rates are independent of the number of activated parametric
dimensions.
\end{theorem}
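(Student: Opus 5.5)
The plan is to deduce both rates from the summability statement of \cref{prop:summability} by Stechkin-type arguments, working with the monotone envelope so that the index sets can be taken downward closed. The case $u(\by)=\widehat\Aop_{\by}$ is covered by \cref{thm:operator}, which gives $(\bb,p,\eps_A)$-holomorphy with values in $Y=\cL(\Xhat,\Xhat')$. The case $u(\by)=\widehat\jj_{\by}$ is covered by \cref{thm:current}, with $Y=\Xhat$. Apart from this, the argument is abstract in $Y$.

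\textbf{Uniform rate \eqref{eq:Nterm}.}
\begin{enumerate}[label=\textup{(\alph*)}]
\item Set $c_{\bnu}:=\norm{p_{\bnu}}_Y$ and let $\widehat c_{\bnu}=\sup_{\bmu\ge\bnu}c_{\bmu}$ be its monotone envelope. By \cref{prop:summability}, $\widehat c\in\ell^p(\mathcal F)$.
\item Since $\widehat c$ is nonincreasing with respect to the componentwise order, fix an enumeration of $\mathcal F$ along which $\widehat c$ is nonincreasing. Break ties consistently with the partial order, for example by total degree and then lexicographically. Every initial segment $\Lambda_N$ is then downward closed, and the sets are nested by construction.
\item The unconditional convergence in $L^\infty(\U;Y)$ from \cref{prop:summability}, together with \eqref{eq:Pnorm}, gives
\[
\sup_{\by\in\U}\Big\|u(\by)-\sum_{\bnu\in\Lambda_N}p_{\bnu}P_{\bnu}(\by)\Big\|_Y\le\sum_{\bnu\notin\Lambda_N}c_{\bnu}\le\sum_{\bnu\notin\Lambda_N}\widehat c_{\bnu}.
\]
This tail is the $\ell^1$ tail of the nonincreasing rearrangement $(\widehat c^*_n)$ beyond $N$.
\item Stechkin's inequality $\sum_{n>N}\widehat c^*_n\le\norm{\widehat c}_{\ell^p}N^{-(1/p-1)}$ then finishes the argument. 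Its proof is the bound $\widehat c^*_n\le n^{-1/p}\norm{\widehat c}_{\ell^p}$ followed by summation.
\end{enumerate}

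\textbf{Hilbert-space rate \eqref{eq:Nterm-leg}.} Apply the same construction to the orthonormal coefficients $\norm{l_{\bnu}}_Y$, whose monotone envelope is also in $\ell^p$ by \cref{prop:summability}. Since $(L_{\bnu})$ is orthonormal in $L^2(\U,\mu)$ and $Y$ is Hilbert, Parseval in the Bochner space $L^2(\U,\mu;Y)\cong L^2(\U,\mu)\otimes Y$ gives
\[
\Big\|u-\sum_{\bnu\in\Lambda_N'}l_{\bnu}L_{\bnu}\Big\|^2_{L^2(\U,\mu;Y)}=\sum_{\bnu\notin\Lambda_N'}\norm{l_{\bnu}}_Y^2 .
\]
Here one uses that the $L^\infty$-convergent series is also the $L^2$ expansion of $u$. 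The $\ell^2$ version of Stechkin's inequality bounds the right-hand side by $\norm{\widehat c}_{\ell^p}^2N^{-2(1/p-1/2)}$, which gives the claimed rate.

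\textbf{Main obstacle.} The delicate point is the downward closedness. Choosing the $N$ largest actual coefficients need not give a downward closed set. The remedy is to order by the envelope, which is monotone, and to use a tie-breaking rule compatible with the partial order. The cost is that the rate constant involves $\norm{\widehat c}_{\ell^p}$ rather than $\norm{c}_{\ell^p}$, and this is exactly why the monotone-class membership in \cref{prop:summability} is needed.

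A second point to check is that the $L^\infty$ expansion is the $L^2(\mu;Y)$ expansion. This follows because uniform convergence implies $L^2$ convergence and the $L^2$ Legendre coefficients are unique.

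No constant depends on the number of active parameters, since only $\norm{\widehat c}_{\ell^p}$ enters.
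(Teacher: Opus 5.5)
Your proposal is correct and follows the paper's proof step for step. You choose index sets from the monotone envelope supplied by \cref{prop:summability}, bound the uniform error by the coefficient tail via unconditional $L^\infty(\U;Y)$ convergence and \eqref{eq:Pnorm}, apply Stechkin's inequality, and use Parseval in $L^2(\U,\mu;Y)$ for the Hilbert case. The one loose point is your tie-breaking example: the zero level set of the envelope is infinite whenever it is nonempty, so ``total degree, then lexicographic'' is not an enumeration of type $\omega$ there. It suffices to enumerate the positive part of the envelope by value, breaking ties (which form finite classes) by total degree, and to append an order-compatible enumeration with finite levels only if that part is finite; this leaves the tail estimate unchanged.
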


\begin{proof}
Let $\Lambda_N$ index the $N$ largest entries of the monotone envelope
of $(\norm{p_{\bnu}}_Y)$; such sets may be chosen nested and downward
closed. By \cref{prop:summability} that envelope lies in
$\ell^p(\mathcal F)\subset\ell^1(\mathcal F)$, so Stechkin's inequality
gives $\sum_{\bnu\notin\Lambda_N}\norm{p_{\bnu}}_Y\le CN^{-(1/p-1)}$. The series converges
to $u$ unconditionally in $L^\infty(\U;Y)$, again by
\cref{prop:summability}, so with \eqref{eq:Pnorm}
\[
\sup_{\by\in\U}
\Big\|u(\by)-\sum_{\bnu\in\Lambda_N}p_{\bnu}P_{\bnu}(\by)\Big\|_Y
\le
\sum_{\bnu\notin\Lambda_N}\norm{p_{\bnu}}_Y
\le
CN^{-(1/p-1)} ,
\]
which is \eqref{eq:Nterm}. No structure on $Y$ beyond completeness is
used, which is why the $L^\infty$ normalization \eqref{eq:Pnorm} is the
convenient one here. When $Y$ is Hilbert, Parseval in $L^2(\U,\mu;Y)$
applied to the orthonormal system $(L_{\bnu})$ gives
$\norm{u-\sum_{\Lambda'_N}l_{\bnu}L_{\bnu}}^2_{L^2(\U,\mu;Y)}
=\sum_{\bnu\notin\Lambda'_N}\norm{l_{\bnu}}_Y^2$, and Stechkin for
$\ell^p\subset\ell^2$ yields \eqref{eq:Nterm-leg}. Parseval is the only
step using the Hilbert structure, and it is the reason
\eqref{eq:Nterm-leg} is not asserted for general Banach $Y$.
\end{proof}

\begin{remark}[Scope of the statements]\label{rem:sparse-caveats}
Because \cref{prop:summability} supplies the monotone envelope, the
index sets in \cref{thm:sparse} may be taken downward closed, so the
truncations are amenable to sparse interpolation and Smolyak quadrature
without requiring prior identification of the $N$ largest
coefficients. Chkifa, Cohen and Schwab show in addition, in Theorem~3.1 of Ref.~\refcite{ChkifaCohenSchwab2015}, that the same rate is retained when the Legendre projection is replaced by
interpolation at suitable points; we do not pursue that here.

The two statements of \cref{thm:sparse} use the two normalizations for
different reasons: \eqref{eq:Nterm} rests only on
$\norm{P_{\bnu}}_{L^\infty(\U)}=1$ and holds for every Banach $Y$, in
particular for $Y=\cL(\Xhat,\Xhat')$, whereas \eqref{eq:Nterm-leg}
rests on Parseval for the orthonormal family $(L_{\bnu})$ and is
asserted only for Hilbert $Y$, so it applies to $Y=\Xhat$ but not,
without further argument, to the operator family.
\end{remark}

\begin{corollary}[Sparse surrogates for the operator family]
\label{cor:operator-surrogate}
Let $\bb\in\ell^p(\N)$ with $0<p<1$ and let \cref{ass:param} hold. Then
the map $\by\mapsto\widehat\Aop_{\by}$ admits sparse polynomial
surrogates in $\cL(\Xhat,\Xhat')$ converging at the
dimension-independent best $N$-term rate $N^{-(1/p-1)}$ of
\eqref{eq:Nterm}. Under \cref{ass:invert} the same holds for
$\by\mapsto\widehat\Aop_{\by}^{-1}$ and for
$\by\mapsto\widehat\jj_{\by}$.
\end{corollary}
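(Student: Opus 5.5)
The plan is to obtain \cref{cor:operator-surrogate} by applying \cref{thm:sparse} to three Banach-valued maps. For each one only the $(\bb,p,\eps)$-holomorphy required by \cref{def:hol} has to be checked.

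\textbf{The operator family.} Take $Y=\cL(\Xhat,\Xhat')$. \Cref{thm:operator} gives $(\bb,p,\eps_A)$-holomorphy of $\by\mapsto\widehat\Aop_{\by}$. Concretely, the extension is coordinatewise and section-wise holomorphic on every admissible tube $\mathcal O_{\brho}$, and it satisfies the bound \eqref{eq:CA}. By \eqref{eq:EinO} that bound also holds on $\mathcal E_{\brho}$. Continuity on $\U$ is \cref{cor:Lip}. Applying \cref{prop:summability} and then \eqref{eq:Nterm} gives downward closed $\Lambda_N$ with uniform error $CN^{-(1/p-1)}$ in the operator norm. Since $Y$ is only a Banach space, this $L^\infty$ statement is the one to use, as explained in \cref{rem:sparse-caveats}.

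\textbf{The inverse family.} Take $Y=\cL(\Xhat',\Xhat)$ and $\eps=\eps_{\rm inv}$. \Cref{lem:complexinverse} shows that $\widehat\Aop_{\bz}^{-1}$ exists on every $(\bb,\eps_{\rm inv})$-admissible tube, is bounded by $2C_{\rm inv}$ uniformly in $\brho$, and is holomorphic in each coordinate and on finite-dimensional sections. It remains to check continuity of the real restriction. This follows from \cref{cor:Lip} together with continuity of inversion on the open set of isomorphisms (\cref{lem:neumann}). Quantitatively,
\[
\norm{\widehat\Aop_{\by}^{-1}-\widehat\Aop_{\widetilde\by}^{-1}}\le C_{\rm inv}^2C_L\sum_jb_j\abs{y_j-\widetilde y_j},
\]
using the resolvent identity and \cref{lem:uniforminverse}. \Cref{thm:sparse} then applies with the same rate.

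\textbf{The current.} For $\by\mapsto\widehat\jj_{\by}$, \cref{thm:current} already gives $(\bb,p,\eps_{\rm sol})$-holomorphy, so \eqref{eq:Nterm} applies. Because $\Xhat$ is a Hilbert space, \eqref{eq:Nterm-leg} applies as well.

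The only step that is not pure bookkeeping is continuity of the real restriction for the inverse. As shown above, it reduces to the Lipschitz bound of \cref{cor:Lip} combined with the uniform inverse bound. No further obstacle is expected, since the rate constant depends only on $C_\eps$ and on $\norm{\bb}_{\ell^p}$ through \cref{prop:summability}.
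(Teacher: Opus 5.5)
Your proposal is correct and follows the paper's proof, which applies \cref{thm:sparse} with the required holomorphy supplied by \cref{thm:operator} for the operator family, \cref{lem:complexinverse} for the inverse family and \cref{thm:current} for the current. Your resolvent-identity estimate establishes continuity of $\by\mapsto\widehat\Aop_{\by}^{-1}$ on $\U$, which \cref{def:hol} requires but \cref{lem:complexinverse} does not state; the paper's one-line proof leaves this step implicit, so your version fills a small gap.
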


\begin{proof}
Apply \cref{thm:sparse} with $Y=\cL(\Xhat,\Xhat')$, whose hypotheses
are supplied by \cref{thm:operator}; for the inverse and the current
use \cref{lem:complexinverse,thm:current} in place of
\cref{thm:operator}.
\end{proof}

\Cref{thm:operator} thus verifies the hypotheses of the abstract best
$N$-term theory for the Maxwell EFIE family in $\cL(\Xhat,\Xhat')$, the
natural energy-space operator norm. The approximation results therefore
apply to the entire operator family rather than to a single solution,
and give the approximation-theoretic basis for sparse surrogates of the
Galerkin operator on the fixed-reference space; see \cref{prop:matrix}.

\subsection{Far-field holomorphy}\label{sec:farfield}

For the electric potential \eqref{eq:electric-potential} the far-field
pattern generated by $\jj\in X(\Gamma)$ is
\begin{equation}\label{eq:farphys}
\EE^\infty(\obs)
=
\frac{\mathrm i\kappa}{4\pi}
\big(I-\obs\obs^\top\big)
\int_\Gamma e^{-\mathrm i\kappa\obs\cdot y}\jj(y)\ds_y ,
\qquad
\obs\in\mathbb S^2 .
\end{equation}
Formula \eqref{eq:farphys} is consistent with the conventions of
\cref{sec:maxwell}. Writing $\obs=x/\abs x$ and
expanding
$G_\kappa(x,y)\sim
e^{\mathrm i\kappa\abs x}e^{-\mathrm i\kappa\obs\cdot y}/(4\pi\abs x)$
as $\abs x\to\infty$, the two terms of \eqref{eq:electric-potential}
contribute, respectively,
\[
\frac{\mathrm i\kappa}{4\pi}
\int_\Gamma e^{-\mathrm i\kappa\obs\cdot y}\jj\ds
\qquad\text{and}\qquad
-\frac{\mathrm i\kappa}{4\pi}\,\obs\obs^\top
\int_\Gamma e^{-\mathrm i\kappa\obs\cdot y}\jj\ds ,
\]
the second after the substitution
$\nabla_x\rightsquigarrow\mathrm i\kappa\obs$ and one surface
integration by parts; that integration by parts is the same one that
produces the relative sign in \eqref{eq:efie-form}.

Under the Piola transformation,
$\jj_{\by}(\rr_{\by}(\xhat))J_{\by}(\xhat)
=F_{\by}(\xhat)\widehat\jj_{\by}(\xhat)$, so that
\begin{equation}\label{eq:farpull}
\EE^\infty_{\by}(\obs)
=
\frac{\mathrm i\kappa}{4\pi}
\big(I-\obs\obs^\top\big)
\int_{\Ghat}
e^{-\mathrm i\kappa\obs\cdot\rr_{\by}(\xhat)}
F_{\by}(\xhat)\widehat\jj_{\by}(\xhat)\ds_{\xhat} ,
\end{equation}
and once again the surface Jacobian cancels exactly. For $\widehat\jj_{\by}\in\Xhat$, the integral in \eqref{eq:farpull} is
interpreted by duality rather than pointwise as an $L^1$ integral. Its
$m$th component, $m=1,2,3$, is the pairing of $\widehat\jj_{\by}$ with
the tangential test field
\[
\xhat\longmapsto
e^{-\mathrm i\kappa\obs\cdot\rr_{\by}(\xhat)}
F_{\by}(\xhat)^\top\mathbf e_m .
\]
Under \cref{ass:param} one has $\rr_{\by}\in W^{2,\infty}$ and
$F_{\by}\in W^{1,\infty}$, so this test field belongs uniformly to
$W^{1,\infty}(\Ghat)$, hence to $H^{1/2}(\Ghat)$. This regularity
suffices for the duality pairing and for the $C^m(\mathbb S^2)$
estimates below.

\begin{theorem}[Far-field holomorphy]\label{thm:far}
Under the assumptions of \cref{thm:current} there exists
$\eps_\infty>0$, \emph{independent of $m$}, such that for every
$m\in\N_0$ the map
\[
\U\ni\by\longmapsto\EE^\infty_{\by}
\]
is $(\bb,p,\eps_\infty)$-holomorphic with values in
$C^m(\mathbb S^2;\C^3)$. The associated uniform bound $C_{\infty,m}$
does depend on $m$; only the domain of holomorphy is common to all $m$.
Moreover $\EE^\infty_{\by}$ is real analytic on $\mathbb S^2$ for every
$\by\in\U$, and real analytic in the parameter on every
finite-dimensional section of $\U$.
\end{theorem}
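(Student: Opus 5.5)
We extend the pulled-back far-field formula \eqref{eq:farpull} to complex parameters and write it as the action of the holomorphic current on a holomorphic family of test fields. For $\bz\in\mathcal O_{\brho}$, $\obs\in\mathbb S^2$ and $m'=1,2,3$ set
\[
\bm\phi_{\bz,\obs,m'}(\xhat):=e^{-\mathrm i\kappa\obs\cdot\rr_{\bz}(\xhat)}F_{\bz}(\xhat)^\top\mathbf e_{m'} ,
\]
and define
\[
\EE^\infty_{\bz}(\obs):=\frac{\mathrm i\kappa}{4\pi}\big(I-\obs\obs^\top\big)\Big(\pair{\bm\phi_{\bz,\obs,m'}}{\widehat\jj_{\bz}}\Big)_{m'=1}^3 ,
\]
with the complex-bilinear duality between $\Hpar{1/2}{\Ghat}$ and $\Hpar{-1/2}{\Ghat}$; the tangential part of $\bm\phi$ is what pairs, and \cref{lem:tangential-identification} makes this legitimate. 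For real $\bz=\by$ this reproduces \eqref{eq:farpull} and hence, via \cref{thm:current}, the physical far field. The holomorphic ingredient $\widehat\jj_{\bz}$ comes from \cref{thm:current}, with the uniform tube bound $\sup_{\bz}\norm{\widehat\jj_{\bz}}_{\Xhat}\le C$ recorded in its proof. We take $\eps_\infty:=\eps_{\rm sol}$, which visibly does not depend on $m$.

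The next step is to view the test family as a holomorphic map into a space of smooth functions of $\obs$. Since $\rr_{\bz}$ is affine in $\bz$ with values in $W^{2,\infty}$, and $F_{\bz}$ is affine with values in $W^{1,\infty}$, the map $(\bz,\obs)\mapsto\bm\phi_{\bz,\obs,m'}$ is entire in $\bz$. It also extends to all $\obs\in\C^3$ and is entire there. Any derivative $\partial_{\obs}^{\alpha}$ with $\abs\alpha\le m$ multiplies $\bm\phi$ by a polynomial of degree $\abs\alpha$ in the components of $\rr_{\bz}(\xhat)$. On the admissible set $\rr_{\bz}$ is bounded in $W^{1,\infty}$, and \eqref{eq:Fcomplex} bounds $F_{\bz}$. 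Since $\abs{\Im\rr_{\bz}}\le\eps$, the exponential is bounded by $e^{\kappa\eps}$. Together these give
\[
\sup_{\bz\in\mathcal O_{\brho}}\ \sup_{\obs\in\mathbb S^2}\ \norm{\partial_{\obs}^{\alpha}\bm\phi_{\bz,\obs,m'}}_{W^{1,\infty}(\Ghat)}\le C_{m} ,
\]
and $W^{1,\infty}\hookrightarrow H^{1/2}$. Hence $\bz\mapsto\bm\phi_{\bz,\cdot,m'}$ is holomorphic and bounded with values in $C^m(\mathbb S^2;\Hpar{1/2}{\Ghat})$. This uses the same argument as \cref{lem:data}: an affine map composed with an entire function, with Cauchy-formula upgrades to norm holomorphy. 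The projector $I-\obs\obs^\top$ is a fixed smooth multiplier on $C^m(\mathbb S^2)$.

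Composing, $\bz\mapsto\EE^\infty_{\bz}$ is the image of the pair $(\bm\phi_{\bz},\widehat\jj_{\bz})$ under the continuous bilinear map $C^m(\mathbb S^2;\Hpar{1/2}{\Ghat})\times\Xhat\to C^m(\mathbb S^2;\C^3)$. It is therefore coordinatewise and section-wise holomorphic, with
\[
\sup_{\mathcal O_{\brho}}\norm{\EE^\infty_{\bz}}_{C^m}\le C_{\infty,m}:=C\,C_m\sup_{\bz}\norm{\widehat\jj_{\bz}}_{\Xhat} .
\]
This bound holds on $\mathcal O_{\brho}\supset\mathcal E_{\brho}$ by \eqref{eq:EinO}. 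Continuity on $\U$ follows from the Lipschitz-type argument of \cref{cor:Lip}, applied through \cref{lem:segment} to the bounded line-holomorphic family; line holomorphy of $\widehat\jj$ follows from that of $\widehat\Aop$ and \cref{lem:data}. This proves the $(\bb,p,\eps_\infty)$-holomorphy in $C^m$.

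For the real-analyticity claims, analyticity in $\obs$ follows because the formula extends to complex $\obs$ in a neighbourhood of $\mathbb S^2$ as an entire function. The bound above applies to $\obs$ in a complex ball, and the pairing with the fixed $\widehat\jj_{\by}$ preserves holomorphy, so its restriction to $\mathbb S^2$ is real analytic. Analyticity in $\by$ on a finite-dimensional section is joint holomorphy on that section restricted to real points. The main obstacle I expect is not analytic but bookkeeping: checking that the $C^m$ holomorphy of the test family is in norm rather than merely pointwise in $\obs$. I would handle this through uniform bounds on a complex $\obs$-neighbourhood together with Cauchy estimates, which also explains why $C_{\infty,m}$ grows with $m$ while $\eps_\infty$ does not.
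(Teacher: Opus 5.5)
Your proposal is correct and follows essentially the paper's route. Like the paper, you write the pulled-back far field as a duality pairing of the holomorphic current with a $W^{1,\infty}$-bounded holomorphic test family whose $\obs$-derivatives only introduce polynomial factors in $\rr_{\bz}$, so the threshold is independent of $m$ while the constant grows with $m$, and you get real analyticity in $\obs$ from the entire extension to $\obs\in\C^3$; your explicit treatment of continuity on $\U$ and of parameter analyticity via joint holomorphy on finite-dimensional sections fills in points the paper leaves implicit.
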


\begin{proof}
Fix $\obs\in\mathbb S^2$. The map
\[
(\bz,\xhat)\longmapsto
e^{-\mathrm i\kappa\obs\cdot\rr_{\bz}(\xhat)}F_{\bz}(\xhat)
\]
is holomorphic in every coordinate of $\bz$, jointly holomorphic on
finite-dimensional sections, and holomorphic along admissible complex
lines in the sense of \cref{thm:scalar}\,\textup{(iii)}. It is
nonsingular in $\xhat$ and uniformly bounded in $W^{1,\infty}(\Ghat)$
on admissible complex sets. Hence it defines a holomorphic family of
bounded operators $\Xhat\to\C^3$, uniformly in $\obs$.

Differentiation with respect to $\obs$ introduces only polynomial
factors involving $\rr_{\bz}(\xhat)$, which remain uniformly bounded on
admissible sets. The resulting estimates are uniform in $\obs$. Hence
the same conclusion holds in $C^m(\mathbb S^2;\C^3)$, with an
admissibility threshold independent of $m$; the bound $C_{\infty,m}$
may depend on $m$. Composition with \cref{thm:current} proves the first
claim.

For real analyticity in $\obs$, the integral in \eqref{eq:farpull}
extends to an entire function of $\obs\in\C^3$. Multiplication by
$I-\obs\obs^\top$ preserves entire dependence, and the restriction to
$\mathbb S^2$ is therefore real analytic.
In detail, the integrand is entire in the observation direction and
uniformly bounded on compact complex neighbourhoods, so the chart
representations extend holomorphically to complex neighbourhoods of the
real chart domains and their restrictions are real analytic. The uniform
$C^m$ bounds obtained above do not by themselves imply this, so the
separate argument is required.
\end{proof}

\begin{corollary}[Linear observables]\label{cor:observables}
Every bounded complex-linear functional of the far field inherits
$(\bb,p,\eps_\infty)$-holomorphy, hence the rates of \cref{thm:sparse}.
This includes each far-field coefficient in a vector spherical harmonic
expansion.
\end{corollary}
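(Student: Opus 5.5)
The plan is to compose the far-field holomorphy of \cref{thm:far} with a fixed bounded complex-linear functional and check that composition preserves every requirement of \cref{def:hol}. Let $\ell:C^m(\mathbb S^2;\C^3)\to\C$ be bounded and complex linear for some $m\in\N_0$. For bounded linear observables on $C^0(\mathbb S^2;\C^3)$, or on $L^2(\mathbb S^2;\C^3)$ since $C^0\hookrightarrow L^2$ continuously, take $m=0$. Set $u_\ell(\by):=\ell(\EE^\infty_{\by})$.

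\textbf{Holomorphy and bounds.} By \cref{thm:far}, for every $(\bb,\eps_\infty)$-admissible $\brho$ the map $\bz\mapsto\EE^\infty_{\bz}\in C^m(\mathbb S^2;\C^3)$ extends to $\mathcal O_{\brho}$. The extension is holomorphic in each coordinate, jointly holomorphic on finite-dimensional sections, and bounded by $C_{\infty,m}$ on $\mathcal E_{\brho}$. Since $\ell$ is continuous and complex linear, $\ell\circ\EE^\infty_{\bz}$ has the same coordinatewise and sectional holomorphy: difference quotients commute with $\ell$ and converge by continuity. Continuity on $\U$ is inherited in the same way. The uniform bound becomes
\[
\sup_{\bz\in\mathcal E_{\brho}}\abs{u_\ell(\bz)}\le\norm{\ell}\,C_{\infty,m},
\]
which is independent of $\brho$. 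Hence $u_\ell$ is $(\bb,p,\eps_\infty)$-holomorphic with $Y=\C$. Applying \cref{thm:sparse} with $Y=\C$, which is a Hilbert space, gives both rates \eqref{eq:Nterm} and \eqref{eq:Nterm-leg}.

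\textbf{Vector spherical harmonic coefficients.} These are the functionals
\[
\ell_{n,k}(\EE)=\int_{\mathbb S^2}\EE\cdot\overline{\mathbf Y_{n,k}}\,ds .
\]
Here the conjugation falls on the fixed, parameter-independent harmonic $\mathbf Y_{n,k}$, not on $\EE$. So $\ell_{n,k}$ is complex linear in the far field, consistent with \cref{rem:bilinear}. It is bounded on $C^0(\mathbb S^2;\C^3)$ with norm at most $4\pi\norm{\mathbf Y_{n,k}}_{L^\infty}$, which reduces this case to the general argument with $m=0$.

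\textbf{Main obstacle.} The only delicate point is to avoid any conjugation of the parameter-dependent quantity. A sesquilinear observable such as $\abs{\EE^\infty}^2$ is antiholomorphic in part and is excluded here, consistent with \cref{rem:rcs}.
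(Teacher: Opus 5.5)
Your proposal is correct and matches the paper's implicit argument. The paper gives no separate proof because the corollary is exactly the composition of the $(\bb,p,\eps_\infty)$-holomorphic far-field map of \cref{thm:far}, whose threshold is independent of $m$, with a fixed bounded complex-linear functional, followed by \cref{thm:sparse} with $Y=\C$; your treatment of the vector spherical harmonic coefficients, with the conjugation placed only on the fixed harmonic, fits that scheme.
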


\begin{remark}[Radar cross sections]\label{rem:rcs}
Quantities involving conjugation, such as
$\abs{\EE^\infty_{\by}(\obs)}^2$ or the radar cross sections considered in Ref.~\refcite{EscapilJerez2024}, are not holomorphic under the
complexification used here, since they involve complex conjugation.
Their restriction to every finite-dimensional section of $\U$ is
nevertheless real analytic, being obtained from a holomorphic map and
its complex conjugate, and mixed derivative estimates follow from
\cref{thm:cauchy} together with the Leibniz rule. These bounds may be
used to derive approximation results for such quadratic observables,
but such an argument is outside the scope of the present paper.
\end{remark}

\section{Implications for boundary element approximation}
\label{sec:galerkin}

We now transfer the fixed-reference holomorphy results to Galerkin
discretization. Because \cref{prop:cancel} poses every parametric
instance on the same reference surface and in the same space $\Xhat$,
one mesh, divergence-conforming trial space, basis and coefficient
space serve every parameter. The Galerkin operator and matrix therefore
inherit the dimension-robust parametric approximation of
\cref{thm:sparse,cor:operator-surrogate}, with constants independent of
$h$ and $n_h$ in the energy norm induced on the coefficient space.
Here \emph{sparse} refers only to the retained parametric polynomial
coefficients; the boundary element matrices remain dense in the spatial
variables. Let
$\{\widehat{\mathcal T}_h\}$ be a shape-regular family of triangulations
of $\Ghat$ and
\begin{equation}\label{eq:BEspace}
\Xhath\subset\Xhat,
\qquad
\dim\Xhath=:n_h<\infty,
\end{equation}
the associated divergence-conforming boundary element spaces of fixed
degree $k\ge0$, that is, rotated Raviart--Thomas (Rao--Wilton--Glisson for $k=0$) spaces~\cite{BuffaHiptmair2003,SauterSchwab2011,Nedelec}, with basis
$\{\varphi_1,\dots,\varphi_{n_h}\}$. The Galerkin problem is: find
$\widehat\jj_{\by,h}\in\Xhath$ with
\begin{equation}\label{eq:galerkin}
\widehat a_{\by}(\widehat\jj_{\by,h},\widehat\vv_h)
=
\widehat f_{\by}(\widehat\vv_h)
\qquad\forall\,\widehat\vv_h\in\Xhath ,
\end{equation}
with matrix and load vector
\begin{equation}\label{eq:matrix}
\big[\mathbf A_{\by,h}\big]_{mn}
=
\widehat a_{\by}(\varphi_n,\varphi_m),
\qquad
\big[\mathbf f_{\by,h}\big]_m=\widehat f_{\by}(\varphi_m) .
\end{equation}
The mesh, the space, its basis, the number $n_h$ of degrees of freedom
and the coefficient space are all independent of $\by$. Every parametric
instance is therefore represented in the same algebraic coordinates, and
no parameter-dependent reference remeshing is introduced. Any geometric
approximation error belongs to the chosen spatial discretization.
To formulate a mesh-independent matrix bound, we equip the coefficient
space with the norm induced by $\Xhat$; such a bound is false in the
Euclidean norm. Let
\begin{equation}\label{eq:Phih}
\Phi_h:\C^{n_h}\to\Xhath,
\qquad
\mathbf c\longmapsto\sum_{n=1}^{n_h}c_n\varphi_n ,
\end{equation}
and equip $\C^{n_h}$ with $\norm{\mathbf c}_{X,h}
:=\norm{\Phi_h\mathbf c}_{\Xhat}$.

\begin{proposition}[Holomorphy of the Galerkin operator and matrix]
\label{prop:matrix}
Let $\widehat\Aop_{\bz,h}\in\cL(\Xhath,\Xhath')$ be the operator
associated with the restriction of $\widehat a_{\bz}$ to
$\Xhath\times\Xhath$. Then $\by\mapsto\widehat\Aop_{\by,h}$ and
$\by\mapsto\mathbf A_{\by,h}$ are $(\bb,p,\eps_A)$-holomorphic, and
\begin{equation}\label{eq:matrixnorm}
\sup_{\bz}
\norm{\mathbf A_{\bz,h}}
_{\cL((\C^{n_h},\norm{\cdot}_{X,h}),(\C^{n_h},\norm{\cdot}_{X,h})')}
=
\sup_{\bz}\norm{\widehat\Aop_{\bz,h}}_{\cL(\Xhath,\Xhath')}
\le C_A ,
\end{equation}
with $C_A$ from \eqref{eq:CA}, independently of $h$. All matrix norms
in this statement are the operator norms induced by the $\Xhat$-norm on
the coefficient space through $\Phi_h$; no $h$-uniform bound in the
Euclidean matrix norm is asserted. The load-vector map
$\by\mapsto\mathbf f_{\by,h}$ is likewise
$(\bb,p,\eps_f)$-holomorphic. Consequently, the Galerkin operator
admits sparse $L^\infty$-normalized Legendre surrogates with uniform
best $N$-term rate $N^{-(1/p-1)}$, by the Banach-valued estimate
\eqref{eq:Nterm} of \cref{thm:sparse}. The same conclusion holds for
its matrix representation in any fixed basis of $\Xhath$. The
approximation constant depends only on $C_A$, $\bb$, $p$ and $\eps_A$;
it is independent of $h$ and, in particular, of the number $n_h$ of
degrees of freedom.
\end{proposition}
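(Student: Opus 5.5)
The plan is to obtain everything by restricting the continuous objects of \cref{thm:operator} and \cref{lem:data} to the fixed subspace $\Xhath\subset\Xhat$, and then transporting through the fixed isometry $\Phi_h$. No new analysis of the kernels is required.

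\textbf{Step 1: Galerkin operator.} Let $\iota_h:\Xhath\to\Xhat$ be the inclusion, equipped with the norm of $\Xhat$. Then
\[
\widehat\Aop_{\bz,h}=\iota_h'\widehat\Aop_{\bz}\iota_h .
\]
The map $\mathsf B\mapsto\iota_h'\mathsf B\iota_h$ is bounded and linear from $\cL(\Xhat,\Xhat')$ to $\cL(\Xhath,\Xhath')$, with norm at most $1$. This holds because $\norm{\iota_h}\le1$, and $\norm{\iota_h'}\le1$ since a functional restricted to a subspace has no larger norm.

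\begin{itemize}
\item Composition with a bounded linear map preserves holomorphy in each coordinate, joint holomorphy on finite-dimensional sections, and holomorphy along lines.
\item Continuity on $\U$ is inherited from \cref{cor:Lip}.
\item The bound \eqref{eq:CA} carries over with the same constant $C_A$. It holds on $\mathcal O_{\brho}$, hence on $\mathcal E_{\brho}$ by \eqref{eq:EinO}.
\end{itemize}

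Thus $\by\mapsto\widehat\Aop_{\by,h}$ is $(\bb,p,\eps_A)$-holomorphic, with constants independent of $h$.

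\textbf{Step 2: matrix.} Equip $\C^{n_h}$ with $\norm{\cdot}_{X,h}$. Then $\Phi_h$ is by definition an isometric isomorphism onto $\Xhath$, and $\Phi_h'$ is an isometry $\Xhath'\to(\C^{n_h},\norm{\cdot}_{X,h})'$. We will check, using the bilinear identification of \cref{rem:bilinear}, that
\[
\mathbf c^\top\mathbf A_{\bz,h}\mathbf d=\widehat a_{\bz}(\Phi_h\mathbf d,\Phi_h\mathbf c),
\]
which identifies $\mathbf A_{\bz,h}$ with $\Phi_h'\widehat\Aop_{\bz,h}\Phi_h$. The index order in \eqref{eq:matrix} fixes the transposition, but this does not affect norms. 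Since conjugation by fixed isometries preserves operator norms, the equality in \eqref{eq:matrixnorm} follows, and holomorphy transfers verbatim.

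\textbf{Step 3: load vector.} Similarly, $\mathbf f_{\bz,h}=\Phi_h'\iota_h'\widehat f_{\bz}$, and \cref{lem:data} gives $(\bb,p,\eps_f)$-holomorphy.

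\textbf{Step 4: surrogates.} Apply \cref{thm:sparse} with $Y=\cL(\Xhath,\Xhath')$, or with the corresponding matrix space. The rate constant in \eqref{eq:Nterm} comes from \cref{prop:summability}, which depends only on the holomorphy data $(C_A,\bb,p,\eps_A)$. These data are unchanged by the norm-one restriction, so the constant is independent of $h$ and $n_h$.

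For the claim about any fixed basis, note that a change of basis is again an isometry once the coefficient space carries the induced norm. If a different norm were used, the constants would change by the fixed equivalence constants, which is harmless for a fixed basis.

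The only step I expect to require care is the $h$-independence of this constant. The plan is to trace \cref{thm:sparse} back through Theorem~2.2 of Ref.~\refcite{ChkifaCohenSchwab2015} and confirm that the constant depends solely on the uniform bound $C_\eps$ and on $(\bb,p,\eps)$, not on the target space. Everything else reduces to the observation that restriction and isometric identification do not increase norms.
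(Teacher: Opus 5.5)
Your proposal is correct and follows the paper's argument. Like the paper, you write $\mathbf A_{\bz,h}=\Phi_h'\widehat\Aop_{\bz}\Phi_h$; your explicit inclusion $\iota_h$ is simply folded into $\Phi_h$ there, viewed as a map into $\Xhat$. You then use that $\Phi_h$ is an isometry onto $\Xhath\subset\Xhat$ to get \eqref{eq:matrixnorm} from \eqref{eq:CA}, inherit holomorphy from \cref{thm:operator} and \cref{lem:data} by restriction, and apply \cref{thm:sparse} with $Y=\cL(\Xhath,\Xhath')$. Your extra checks, namely the index transposition in \eqref{eq:matrix} and the $Y$-independence of the constant in the imported Banach-valued summability result, are left implicit in the paper and go through as you describe.
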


\begin{proof}
By \eqref{eq:Phih}, $\mathbf A_{\bz,h}
=\Phi_h'\,\widehat\Aop_{\bz}\,\Phi_h$, where
$\Phi_h':\Xhat'\to(\C^{n_h},\norm{\cdot}_{X,h})'$ is the dual map of
$\Phi_h$ and the load vector
$\mathbf f_{\by,h}=\Phi_h'\widehat f_{\by}$ is measured in the same
dual coefficient norm. Since $\Phi_h$ is an isometry of
$(\C^{n_h},\norm{\cdot}_{X,h})$ onto $\Xhath\subset\Xhat$, this gives
\eqref{eq:matrixnorm} from \eqref{eq:CA}. Holomorphy is inherited from
\cref{thm:operator} and \cref{lem:data} by restriction, and the rate
from \cref{thm:sparse} with $Y=\cL(\Xhath,\Xhath')$; the $L^2$ estimate
\eqref{eq:Nterm-leg} is not used, because $\cL(\Xhath,\Xhath')$ is not
a Hilbert space in the relevant operator norm
(\cref{rem:sparse-caveats}).
\end{proof}

The EFIE bilinear form is not coercive, and discrete stability rests on
a G\aa rding inequality with a discrete Hodge-type splitting; it is asymptotic in $h$~\cite{BuffaHiptmair2003,BuffaChristiansen2003}. We
record what parametric uniformity follows from the fixed-geometry
theory, which we do not prove here.

\begin{assumption}[Pointwise discrete stability]\label{ass:disc}
For every $\by\in\U$ there are $h_{\by},\gamma_{\by}>0$ such that
\[
\inf_{0\ne\widehat\jj_h\in\Xhath}\
\sup_{0\ne\widehat\vv_h\in\Xhath}\
\frac{\abs{\widehat a_{\by}(\widehat\jj_h,\widehat\vv_h)}}
{\norm{\widehat\jj_h}_{\Xhat}\norm{\widehat\vv_h}_{\Xhat}}
\ \ge\ \gamma_{\by}
\qquad\text{for all }h\le h_{\by}.
\]
\end{assumption}

\begin{lemma}[Parameter-uniform discrete stability]
\label{lem:disc-uniform}
Under \cref{ass:disc} there are $h_\star,\gamma_\star>0$, independent
of $\by$ and $h$, such that the discrete inf-sup constant is at least
$\gamma_\star$ for every $\by\in\U$ and every $h\le h_\star$.
\end{lemma}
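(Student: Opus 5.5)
The plan is to repeat the compactness-and-perturbation mechanism of \cref{lem:uniforminverse}, now at the discrete level. The one new ingredient is an $h$-independent perturbation bound for the discrete inf-sup constant. Write
\[
\gamma_h(\by):=\inf_{0\ne\widehat\jj_h\in\Xhath}\sup_{0\ne\widehat\vv_h\in\Xhath}\frac{\abs{\widehat a_{\by}(\widehat\jj_h,\widehat\vv_h)}}{\norm{\widehat\jj_h}_{\Xhat}\norm{\widehat\vv_h}_{\Xhat}} .
\]

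First I show that $\gamma_h(\cdot)$ is Lipschitz in the weighted distance, uniformly in $h$. Since $\Xhath\subset\Xhat$ carries the restricted norm, \cref{cor:Lip} gives
\[
\abs{\widehat a_{\by}(\uu,\vv)-\widehat a_{\widetilde\by}(\uu,\vv)}\le C_L\,d(\by,\widetilde\by)\norm{\uu}_{\Xhat}\norm{\vv}_{\Xhat},
\qquad d(\by,\widetilde\by):=\sum_j b_j\abs{y_j-\widetilde y_j} .
\]
Taking sup over $\vv_h$ and then inf over $\jj_h$ yields
\[
\gamma_h(\widetilde\by)\ge\gamma_h(\by)-C_L\,d(\by,\widetilde\by)
\]
for every $h$. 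No mesh quantity appears here, because the operator-norm estimate \eqref{eq:Lip} is measured in $\Xhat$ and restricts without loss, exactly as in the proof of \cref{prop:matrix}.

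Next comes the covering argument. For each $\by\in\U$, \cref{ass:disc} gives $h_{\by},\gamma_{\by}$. Define the neighbourhood $N_{\by}:=\{\widetilde\by\in\U:\ C_L\,d(\by,\widetilde\by)<\gamma_{\by}/2\}$. On $N_{\by}$ we get $\gamma_h(\widetilde\by)\ge\gamma_{\by}/2$ for all $h\le h_{\by}$. Each $N_{\by}$ contains a product-topology open neighbourhood of $\by$, by the same tail-splitting used in the continuity part of \cref{cor:Lip}: truncate $\sum_{j>J}b_j$ and control finitely many coordinates. By Tychonoff compactness of $\U$, finitely many $N_{\by^{(1)}},\dots,N_{\by^{(K)}}$ cover $\U$. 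Then
\[
h_\star:=\min_k h_{\by^{(k)}},\qquad \gamma_\star:=\tfrac12\min_k\gamma_{\by^{(k)}}
\]
work. Indeed, any $\by$ lies in some $N_{\by^{(k)}}$, and $h\le h_\star\le h_{\by^{(k)}}$.

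The main subtlety is the role of $h$. \cref{ass:disc} quantifies over a family of meshes ``for all $h\le h_{\by}$'', so the perturbation bound must hold simultaneously for all $h$. That is exactly why I phrase the Lipschitz estimate at the level of the continuous form restricted to subspaces, rather than via matrices. A second point to check is that the open cover really is open in the product topology. The weighted $\ell^1$-ball is not itself product-open, but it contains a product-open set around its centre; this suffices for extracting the finite subcover. If desired, one can also note that the $\eps$ threshold of \cref{thm:operator} is not needed here, since only real parameters appear.
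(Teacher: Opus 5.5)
Your proposal is correct and follows essentially the same route as the paper: an $h$-uniform Lipschitz bound on $\gamma_h$ obtained by restricting \eqref{eq:Lip} to $\Xhath\subset\Xhat$, weighted-metric neighbourhoods of each $\by$ on which $\gamma_h\ge\tfrac12\gamma_{\by}$ for $h\le h_{\by}$, and a Tychonoff finite subcover giving $h_\star=\min_i h_{\by_i}$ and $\gamma_\star=\tfrac12\min_i\gamma_{\by_i}$.

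One small correction to a side remark. Since $\bb\in\ell^1(\N)$, the map $\widetilde\by\mapsto\sum_j b_j\abs{y_j-\widetilde y_j}$ is a uniform limit on $\U$ of functions of finitely many coordinates. It is therefore product-continuous, so the weighted ball is itself product-open, as the paper asserts. Your weaker claim that it contains a product-open set around its centre also suffices for the argument.
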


\begin{proof}
For fixed $h$ write $\gamma_h(\by)$ for the discrete inf-sup constant
in \cref{ass:disc}. Since $\widehat a_{\by}$ is the bilinear form of
$\widehat\Aop_{\by}$ and $\Xhath\subset\Xhat$, the quotient in
\cref{ass:disc} changes by at most
$\norm{\widehat\Aop_{\by}-\widehat\Aop_{\widetilde\by}}
_{\cL(\Xhat,\Xhat')}$ when $\by$ is replaced by $\widetilde\by$, so
\cref{cor:Lip} gives
\begin{equation}\label{eq:gamma-lip}
\abs{\gamma_h(\by)-\gamma_h(\widetilde\by)}
\le
C_L\sum_{j\ge1}b_j\abs{y_j-\widetilde y_j}
\end{equation}
with $C_L$ independent of $h$ and of the choice of $\Xhath$.

Fix $\by\in\U$. By \eqref{eq:gamma-lip} and the product-topology
continuity established in \cref{cor:Lip}, the set
\[
U_{\by}:=\Big\{\widetilde\by\in\U:\
C_L\sum_{j\ge1}b_j\abs{y_j-\widetilde y_j}<\tfrac12\gamma_{\by}\Big\}
\]
is an open product-topology neighbourhood of $\by$, and
$\gamma_h(\widetilde\by)\ge\tfrac12\gamma_{\by}$ there for every
$h\le h_{\by}$. The family $\{U_{\by}\}_{\by\in\U}$ covers $\U$, which
is compact by Tychonoff, so finitely many
$U_{\by_1},\dots,U_{\by_M}$ suffice. Taking
$h_\star:=\min_ih_{\by_i}$ and
$\gamma_\star:=\tfrac12\min_i\gamma_{\by_i}$ gives the claim.
\end{proof}

Two consequences follow. First, the usual Babu\v ska argument gives
parameter-uniform quasi-optimality for $h\le h_\star$.

Second, the discrete solution map inherits parametric holomorphy, by the
discrete counterpart of \cref{lem:complexinverse}. By
\cref{lem:disc-uniform} the real discrete inverses are bounded by
$\gamma_\star^{-1}$ uniformly in $\by$ and in $h\le h_\star$, and the
discrete operators inherit from \cref{cor:Lip} a Lipschitz bound in the
weighted metric with an $h$-independent constant. The Neumann argument
of \cref{lem:neumann} then applies as in \cref{lem:complexinverse}, with
$\gamma_\star^{-1}$ in place of the continuous inverse bound, producing
an $h$-independent threshold $\eps_{\rm disc}>0$ and a common complex
neighbourhood on which the discrete operators remain invertible with a
uniform bound. Composing with the discrete data map gives
$(\bb,p,\eps_{\rm disc})$-holomorphy of $\by\mapsto\widehat\jj_{\by,h}$,
with constants independent of $h$.

\begin{remark}[Scope]\label{rem:discrete-stability}
\Cref{ass:disc} is a statement about each fixed geometry separately and
is not proved here; \cref{lem:disc-uniform} only upgrades it to a
parametrically uniform statement. No fully discrete spatial-convergence
result is claimed.
\end{remark}

\section{Extensions and open problems}\label{sec:open}

Two extensions appear particularly natural. First, the \(C^{1,1}\)
regularity in \cref{ass:reference} is used in the quantitative ambient
realization behind \cref{thm:scalar}, not in the real-geometry
well-posedness of the EFIE itself. Lowering this assumption would
require a replacement for \cref{lem:ambient}; at the Lipschitz endpoint
one must also use the intrinsic Maxwell trace-space constructions of Refs.~\refcite{BuffaCostabelSheen2002} and~\refcite{BuffaHiptmair2003} rather than the
componentwise identifications used here. We do not pursue this
harmonic-analytic extension.

Second, the magnetic boundary operator remains open. Its kernel behaves
like \(\nabla G_\kappa(x-y)=O(|x-y|^{-2})\) and is understood in the
principal-value sense, so it lies outside the weakly singular theory
used for \cref{thm:scalar}: a fixed-reference Piola representation would
have to preserve the odd-kernel cancellation under complex shape
deformation and still yield uniform boundedness in the Maxwell trace
space. The scalar double-layer analysis of Ref.~\refcite{DolzHenriquez2024} is a useful precedent, but the Maxwell principal-value structure requires a
separate argument. Such a step would be the counterpart of the electric
entry established here for Calder\'on-based dielectric transmission formulations~\cite{JerezSchwab2017,EscapilJerez2024,EscapilJerez2026}. On the
discrete side, a full convergence theory would in addition require
parameter-uniform spatial regularity, geometry and quadrature
consistency, and robust preconditioning.

A separate limitation concerns the wavenumber. All constants obtained
here depend on $\kappa$, and that dependence is not tracked: the
exponential factor $e^{\kappa C\operatorname{diam}\Ghat}$ in the proof
of \cref{lem:kernel} is its visible source, and it propagates into every
later bound through \cref{thm:scalar}. Frequency-explicit estimates, in
the spirit of the acoustic analysis of
Ref.~\refcite{HiptmairSchwabSpence2025}, would require tracking $\kappa$
through the ambient realization and the nonresonance hypothesis, and are
not attempted here.

\section{Conclusions}\label{sec:conclusion}

We have proved operator-norm shape holomorphy of the Maxwell EFIE in its
natural trace space for countably parametric \(\ell^p\)-summable
surface deformations, \(0<p<1\). The argument combines an exact surface
Piola pullback with a uniform
\(H^{-1/2}(\Ghat)\to H^{1/2}(\Ghat)\) theorem for the
complex-deformed scalar single layer. Under pointwise nonresonance, the
surface current and far field inherit this regularity. Their Legendre
coefficients are \(\ell^p\)-summable, giving dimension-independent
best \(N\)-term rates, and the operator-level result transfers to
Galerkin matrices on a single reference mesh with constants independent
of the spatial discretization dimension. Because the statement is at the
operator level, one family of parametric surrogates serves every
incident field and every bounded linear observable, which is the regime
of many-query Maxwell scattering. No truncation in the deformation
amplitude is made, in contrast with first-order shape-perturbation
approaches.

The fractional single-layer estimate extends the existing
\(L^2\)-based boundary-operator holomorphy framework to the Maxwell
energy topology. An analogous treatment of the magnetic principal-value
operator would provide the corresponding ingredient for a
shape-holomorphic Maxwell Calder\'on calculus and parametric dielectric
transmission formulations.

\appendix

\section{The polyellipse lies in the tube}\label{app:ellipse}

Parametrize $\mathcal E_\rho$ by $x=a\cos t$, $y=b\sin t$ with
$a=\tfrac12(\rho+\rho^{-1})$, $b=\tfrac12(\rho-\rho^{-1})$, so that
$a^2-b^2=1$. If $\abs x\le1$ the distance to $[-1,1]$ is
$\abs y\le b$. If $x>1$, write $c:=\cos t\in[1/a,1]$; then
\[
d(c)^2:=(ac-1)^2+b^2(1-c^2),
\qquad
\tfrac12 d^2{}'(c)=(a^2-b^2)c-a=c-a<0 ,
\]
so $d^2$ decreases on $[1/a,1]$ and attains its maximum
$b^2(1-a^{-2})=b^4/a^2$ at $c=1/a$, whence $d\le b^2/a\le b$; the case
$x<-1$ is symmetric. In all cases the distance is at most
$b=(\rho-1)(\rho+1)/(2\rho)<\rho-1$, which is \eqref{eq:EinO}.

\section{Uniform ambient realization}\label{app:ambient}

This appendix proves \cref{lem:ambient}. It is separated from the main
text because it is needed only for the trace--Newton realization of
\cref{sec:newton-route}.

\subsection*{Proof of \cref{lem:ambient}}
Fix $\by\in\U$ and consider the isotopy $t\mapsto\rr_{t\by}(\Ghat)$,
$t\in[0,1]$, which is legitimate because $\U$ is convex, so
$t\by\in\U$ and \cref{ass:param} applies uniformly along it.

We first establish the required uniform tubular estimate.

\emph{Claim.} There is $\delta_{\rm tub}>0$, independent of $\by\in\U$
and $t\in[0,1]$, such that
\begin{equation}\label{eq:tubular}
(\xi,\sigma)\longmapsto\xi+\sigma\,\nn_{t\by}(\xi)
\end{equation}
is bi-Lipschitz from
$\Gamma_{t\by}\times(-\delta_{\rm tub},\delta_{\rm tub})$ onto its
image, with inverse bounded uniformly in $\by$ and $t$; here
$\nn_{t\by}$ is the unit normal of $\Gamma_{t\by}$.

We prove the claim quantitatively, treating near and far pairs
separately, and distinguishing the physical graph radius from the
corresponding separation scale on the reference surface. Write
$\Phi_{t\by}(\xi,\sigma):=\xi+\sigma\nn_{t\by}(\xi)$.

\emph{Near pairs.} By \cref{ass:param} the maps $\rr_{t\by}$ are
uniformly $C^{1,1}$, so there are a \emph{physical graph radius}
$r_{\rm graph}>0$ and constants $M,L$, all independent of $\by$ and
$t$, such that in each ball of radius $r_{\rm graph}$ the surface
$\Gamma_{t\by}$ is the graph of a function $f_{t\by}$ over its tangent
plane with
$\norm{\nabla f_{t\by}}_{L^\infty}\le M$ and
$\operatorname{Lip}(\nabla f_{t\by})\le L$. Consequently the normal
field is uniformly Lipschitz,
$\abs{\nn_{t\by}(\xi)-\nn_{t\by}(\eta)}\le C_{\rm n}\abs{\xi-\eta}$
with $C_{\rm n}=C_{\rm n}(M,L)$. Writing
\[
\Phi_{t\by}(\xi,\sigma)-\Phi_{t\by}(\eta,\tau)
=
(\xi-\eta)
+\sigma\big[\nn_{t\by}(\xi)-\nn_{t\by}(\eta)\big]
+(\sigma-\tau)\nn_{t\by}(\eta) ,
\]
we bound the three terms separately. Abbreviate
\[
A:=\xi-\eta,
\qquad
R:=\sigma\big[\nn_{t\by}(\xi)-\nn_{t\by}(\eta)\big],
\qquad
S:=(\sigma-\tau)\nn_{t\by}(\eta) ,
\]
and split $A$ into its components along and orthogonal to
$\nn_{t\by}(\eta)$, writing $A_{\rm t}$ for the orthogonal part.

The graph representation controls the normal component of $A$. Indeed
\[
A\cdot\nn_{t\by}(\eta)
=\big(1+\abs{\nabla f_{t\by}(v)}^2\big)^{-1/2}
\big[f_{t\by}(u)-f_{t\by}(v)-\nabla f_{t\by}(v)\cdot(u-v)\big],
\]
so the Lipschitz bound on $\nabla f_{t\by}$ gives
\begin{equation}\label{eq:tangent-plane}
\abs{A\cdot\nn_{t\by}(\eta)}\le\tfrac12L\abs{u-v}^2
\le\tfrac12L\abs{\xi-\eta}^2 .
\end{equation}
This is the quantitative form of the transversality between the graph
tangent space and the normal direction. Shrinking the physical graph
radius so that $Lr_{\rm graph}\le1$, \eqref{eq:tangent-plane} yields
$\abs{A_{\rm t}}^2=\abs A^2-(A\cdot\nn_{t\by}(\eta))^2
\ge\tfrac34\abs A^2$ on near pairs.

Since $A_{\rm t}$ is orthogonal to $\nn_{t\by}(\eta)$ and $S$ is
parallel to it,
\[
\abs{A+S}\ \ge\ \abs{A_{\rm t}}\ \ge\ \tfrac{\sqrt3}2\abs{\xi-\eta} ,
\]
\[
\abs{A+S}\ \ge\ \abs{A\cdot\nn_{t\by}(\eta)+(\sigma-\tau)}
\ \ge\ \abs{\sigma-\tau}-\tfrac12L\abs{\xi-\eta}^2 .
\]
Averaging the two bounds with equal weights and using
$\abs{\xi-\eta}\le r_{\rm graph}$ once more,
\[
\abs{A+S}
\ \ge\
\Big(\tfrac{\sqrt3}4-\tfrac14Lr_{\rm graph}\Big)\abs{\xi-\eta}
+\tfrac12\abs{\sigma-\tau}
\ \ge\
\tfrac{\sqrt3}8\abs{\xi-\eta}+\tfrac12\abs{\sigma-\tau} ,
\]
after further reducing $r_{\rm graph}$ so that
$Lr_{\rm graph}\le\tfrac{\sqrt3}2$.

It remains to absorb $R$, which is the perturbation caused by the
variation of the normal. By the Lipschitz bound on $\nn_{t\by}$,
$\abs R\le\abs\sigma C_{\rm n}\abs{\xi-\eta}
\le\delta_{\rm tub}C_{\rm n}\abs{\xi-\eta}$, so choosing
\begin{equation}\label{eq:tub-smallness}
\delta_{\rm tub}\le\frac{\sqrt3}{16\,C_{\rm n}}
\end{equation}
gives
\begin{equation}\label{eq:tub-lower}
\abs{\Phi_{t\by}(\xi,\sigma)-\Phi_{t\by}(\eta,\tau)}
=\abs{A+R+S}
\ \ge\
c_1\abs{\xi-\eta}+c_2\abs{\sigma-\tau} ,
\end{equation}
with $c_1:=\sqrt3/16$ and $c_2:=1/2$.
Both constants depend only on $M$ and $L$, hence are independent of
$\by$ and $t$. This is a \emph{near-pair} estimate: it holds for
$\abs{\xi-\eta}\le r_{\rm graph}$, so that $\xi$ and $\eta$ lie in a
common graph patch. Within such a patch the intrinsic and Euclidean
distances are uniformly equivalent,
\begin{equation}\label{eq:intrinsic-equiv}
\abs{\xi-\eta}\ \le\ d_{\Gamma_{t\by}}(\xi,\eta)
\ \le\ \sqrt{1+M^2}\,\abs{\xi-\eta} ,
\end{equation}
the constant depending only on the uniform graph bound $M$.

Estimate \eqref{eq:tub-lower} gives injectivity and a uniform inverse
Lipschitz estimate, with constant $\max\{c_1,c_2\}^{-1}$ independent of
$\by$ and $t$, for pairs lying in a common graph neighbourhood. The
upper Lipschitz bound for $\Phi_{t\by}$ is immediate from $\abs{\nn_{t\by}}=1$ and the
Lipschitz bound on $\nn_{t\by}$.

To convert this into a condition on the reference surface, note that
\eqref{eq:bilip} gives
\[
\abs{\rr_{t\by}(\xhat)-\rr_{t\by}(\yhat)}
\le C_{\rm lip}\,d_{\Ghat}(\xhat,\yhat) ,
\]
and fix a \emph{reference separation}
\begin{equation}\label{eq:rref}
r_{\rm ref}:=\frac{r_{\rm graph}}{2C_{\rm lip}} .
\end{equation}
If $d_{\Ghat}(\xhat,\yhat)<r_{\rm ref}$ then the images lie within
$r_{\rm graph}/2$ of one another, hence in a common local graph
neighbourhood, so the near-pair estimate applies to them.

\emph{Far pairs.} For $d_{\Ghat}(\xhat,\yhat)\ge r_{\rm ref}$,
compactness of $\Ghat$ gives
\[
m_{\rm ref}:=\inf\{\abs{\xhat-\yhat}:
d_{\Ghat}(\xhat,\yhat)\ge r_{\rm ref}\}>0 .
\]
By \eqref{eq:bilip},
$\abs{\rr_{t\by}(\xhat)-\rr_{t\by}(\yhat)}\ge c_{\rm lip}m_{\rm ref}$
uniformly in $\by$ and $t$. Hence, for such pairs,
\[
\abs{\Phi_{t\by}(\xi,\sigma)-\Phi_{t\by}(\eta,\tau)}
\ge
c_{\rm lip}m_{\rm ref}-\abs\sigma-\abs\tau
\ge
\tfrac12c_{\rm lip}m_{\rm ref}
\qquad\text{provided }
\delta_{\rm tub}<\tfrac14c_{\rm lip}m_{\rm ref} ,
\]
so distinct sheets cannot collide.

To convert this constant lower bound into a multiple of
$d_{\Gamma_{t\by}}(\xi,\eta)+\abs{\sigma-\tau}$, we use that
$\operatorname{diam}_{\rm geo}(\Gamma_{t\by})
\le C_{\rm lip}\operatorname{diam}_{\rm geo}(\Ghat)=:D_*$
uniformly in $\by,t$, again by \eqref{eq:bilip}, so that
$d_{\Gamma_{t\by}}(\xi,\eta)+\abs{\sigma-\tau}
\le D_*+2\delta_{\rm tub}$ and the constant lower bound dominates a
fixed multiple of it.

Combine the two cases and decrease $\delta_{\rm tub}$ if necessary.
There are then $c_{\rm tub},C_{\rm tub}>0$, independent of $\by\in\U$
and $t\in[0,1]$, with
\[
c_{\rm tub}\big(d_{\Gamma_{t\by}}(\xi,\eta)+\abs{\sigma-\tau}\big)
\le
\abs{\Phi_{t\by}(\xi,\sigma)-\Phi_{t\by}(\eta,\tau)}
\le
C_{\rm tub}\big(d_{\Gamma_{t\by}}(\xi,\eta)+\abs{\sigma-\tau}\big) ,
\]
which is the claim; in particular the nearest-point projections onto
the $\Gamma_{t\by}$ are well defined and uniformly Lipschitz on the
corresponding tubes.

Estimate \eqref{eq:tubular} is the quantitative form of the standard
tubular-neighbourhood theorem for $C^{1,1}$ hypersurfaces; see Delfour and Zol\'esio~\cite{DelfourZolesio} for the characterization of
$C^{1,1}$ sets through the oriented distance function and the attendant
Lipschitz nearest-point projection. What that theory does not supply,
and what the argument above establishes, is that $\delta_{\rm tub}$,
$c_{\rm tub}$ and $C_{\rm tub}$ may be chosen \emph{independently of
$\by\in\U$ and $t\in[0,1]$}; that is the only reason the estimate is
carried out explicitly here.

Put $\ps_{\by}:=\sum_{j\ge1}y_j\ps_j$. We define the ambient velocity
by an explicit formula rather than by an abstract extension. Fix a
cutoff $\chi\in C^\infty_c((-\delta_{\rm tub},\delta_{\rm tub}))$ with
$\chi\equiv1$ near $0$, and set
\begin{equation}\label{eq:velocity}
V_{\by,t}\big(\xi+s\,\nn_{t\by}(\xi)\big)
:=
\chi(s)\,\ps_{\by}\big(\rr_{t\by}^{-1}(\xi)\big),
\qquad
\xi\in\Gamma_{t\by},\ \abs s<\delta_{\rm tub},
\end{equation}
extended by zero outside the tube. Fix once and for all a ball
$B\subset\R^3$ containing the closure of the $\delta_{\rm
tub}$-tubular neighbourhood of $\Gamma_{t\by}$ for every $\by\in\U$
and every $t\in[0,1]$; this is possible because $\U$ is bounded and
$\delta_{\rm tub}$ is uniform, so all the surfaces $\Gamma_{t\by}$ lie
in one fixed bounded region. Then $V_{\by,t}$ is supported in $B$ for
all $\by$ and $t$, and consequently $\Phi_{\by,t}=\Id$ on
$\R^3\setminus B$ for all $t$; in particular
$\mathcal T_{\by}=\Id$ there, which is the statement of
\cref{lem:ambient}. The definition is legitimate because
\eqref{eq:tubular} makes $(\xi,s)\mapsto\xi+s\nn_{t\by}(\xi)$ a
bi-Lipschitz parametrization of the tube, and it restricts on
$\Gamma_{t\by}$ to $\ps_{\by}\circ\rr_{t\by}^{-1}$, the velocity of the
isotopy. Each factor in \eqref{eq:velocity} is uniformly Lipschitz in
$x$: $\rr_{t\by}^{-1}$ by \eqref{eq:bilip}, $\nn_{t\by}$ and the
tube coordinates by \eqref{eq:tubular}, and $\ps_{\by}$ because
$\bb\in\ell^1(\N)$. Hence
$\norm{V_{\by,t}}_{W^{1,\infty}(\R^3)}\le C$ uniformly in $(\by,t)$.

The $t$-dependence is recorded in four steps. First,
$t\mapsto\rr_{t\by}$ is affine, hence continuous, with values in
$W^{2,\infty}(\Ghat;\R^3)$.

Second, the pulled-back unit normals $\nn_{t\by}\circ\rr_{t\by}$ depend
continuously on $t$ in $W^{1,\infty}$. Indeed, in a chart the unnormalized normal is the cross product of the
two columns of $D_{\Ghat}\rr_{t\by}$, which is affine in $t$ with values
in $W^{1,\infty}$. Its length is bounded below uniformly, since
\cref{ass:param} makes the tangential differential uniformly
nondegenerate. The normalized quotient is therefore continuous in $t$ in
$W^{1,\infty}$. The normals are \emph{not} affine in $t$: normalization
is nonlinear in the first derivatives of $\rr_{t\by}$.

Third, fix one of the finitely many graph charts of
\cref{ass:reference}. On the common tubular neighbourhood furnished by
\eqref{eq:tubular}, the normal-coordinate map
$(\xi,\sigma)\mapsto\xi+\sigma\nn_{t\by}(\xi)$ and its inverse are
built from $\rr_{t\by}$ and $\nn_{t\by}$ by composition, inversion and
nearest-point projection. Each operation is continuous in the
$W^{1,\infty}$ data by the previous step and by the tubular estimate
\eqref{eq:tubular}, which bounds the inverse uniformly. The resulting
maps are therefore continuous, and in particular measurable, in $t$ for
each fixed point.

Fourth, $V_{\by,t}(x)$ is given by the explicit formula
\eqref{eq:velocity}, a composition of $\chi$, $\ps_{\by}$,
$\rr_{t\by}^{-1}$ and the normal coordinates of $x$. By the three
previous steps every factor is measurable in $t$ at fixed $x$.

It remains to treat a point that enters or leaves the moving tube. Fix
$x\in\R^3$. Since $t\mapsto\rr_{t\by}$ is continuous with values in
$W^{2,\infty}$, and in particular uniformly in $C^0$, the function
\[
t\longmapsto\operatorname{dist}\big(x,\Gamma_{t\by}\big)
\]
is continuous. The cutoff $\chi$ is supported strictly inside
$(-\delta_{\rm tub},\delta_{\rm tub})$, so $V_{\by,t}(x)$ vanishes on
the open set of times where
$\operatorname{dist}(x,\Gamma_{t\by})>\operatorname{supp}\chi$ and is
given by the composition above on the open set where it is smaller.
The two open sets cover $[0,1]$ up to the closed set where equality
holds, on which $V_{\by,t}(x)=0$ by continuity of $\chi$. Hence
$t\mapsto V_{\by,t}(x)$ is measurable for every $x$, including points
that cross the boundary of the tube. That is all the flow argument
requires; continuity of $t\mapsto V_{\by,t}$ in $C^{0,1}(\R^3;\R^3)$ is
neither claimed nor needed.

Together with the uniform Lipschitz bound in $x$ obtained above,
Carath\'eodory's existence and uniqueness theorem applies; see Chap.~2 of Ref.~\refcite{CoddingtonLevinson}. The flow
\[
\partial_t\Phi_{\by,t}(x)=V_{\by,t}(\Phi_{\by,t}(x)),
\qquad
\Phi_{\by,0}=\Id ,
\]
is then globally defined, and Gr\"onwall's inequality applied to the
uniform Lipschitz constant $L_V$ of $V_{\by,t}$ gives
\begin{equation}\label{eq:gronwall}
e^{-L_Vt}\abs{x-x'}
\le
\abs{\Phi_{\by,t}(x)-\Phi_{\by,t}(x')}
\le
e^{L_Vt}\abs{x-x'} ,
\end{equation}
the lower bound being the same estimate for the inverse flow.

It remains to identify the flow on $\Ghat$ with the prescribed isotopy.
Let $\xhat\in\Ghat$ and put $\xi:=\rr_{t\by}(\xhat)$, so that $s=0$ and
$\rr_{t\by}^{-1}(\xi)=\xhat$ in \eqref{eq:velocity}. Since
$\chi(0)=1$,
\begin{equation}\label{eq:velocity-onsurface}
V_{\by,t}\big(\rr_{t\by}(\xhat)\big)
=
\ps_{\by}(\xhat)
=
\partial_t\rr_{t\by}(\xhat) ,
\end{equation}
the last equality because $\rr_{t\by}=\Id+t\ps_{\by}$. Hence
$t\mapsto\Phi_{\by,t}(\xhat)$ and $t\mapsto\rr_{t\by}(\xhat)$ solve the
same ordinary differential equation with the same initial value. By the
uniqueness part of Carath\'eodory's theorem they coincide, so
$\Phi_{\by,t}(\xhat)=\rr_{t\by}(\xhat)$ for all $t$. Taking
$\mathcal T_{\by}:=\Phi_{\by,1}$ gives the realization, with
$C_{\mathcal T}=e^{L_V}$.

For the determinant we use the Jacobian formula for the flow of a
Lipschitz field. We avoid differentiating the flow in $x$, which for a
merely Lipschitz velocity would require care, and argue from the
bi-Lipschitz bounds already obtained.

By \eqref{eq:gronwall} the map $\Phi_{\by,1}$ is bi-Lipschitz with
constants $e^{\pm L_V}$. At any point of differentiability,
$\abs{D\Phi_{\by,1}}\le e^{L_V}$ and
$\abs{D\Phi_{\by,1}^{-1}}\le e^{L_V}$, so
\begin{equation}\label{eq:detbound}
\abs{\det D\mathcal T_{\by}}\ \ge\ e^{-3L_V}
\qquad\text{a.e.}
\end{equation}
For the sign, note that $t\mapsto\Phi_{\by,t}$ is a continuous isotopy
from $\Phi_{\by,0}=\Id$, each $\Phi_{\by,t}$ is a homeomorphism of
$\R^3$ equal to the identity outside the fixed ball $B$, and the
Brouwer degree is a homotopy invariant. Hence
$\deg(\Phi_{\by,t},B,\cdot)=\deg(\Id,B,\cdot)=1$ for every $t$, so
$\mathcal T_{\by}=\Phi_{\by,1}$ is orientation preserving and
$\det D\mathcal T_{\by}\ge e^{-3L_V}=:c_{\mathcal T}>0$ a.e.
The extension operator $\mathsf E$ is constructed on the fixed
reference surface. Since $\Ghat$ is a fixed compact $C^{1,1}$
hypersurface it has a fixed tubular neighbourhood, and a constant
extension along the reference normals, followed by a cutoff, gives the
required operator. Explicitly, with
$x=\xhat+s\nn(\xhat)$ in a small fixed tube set
$(\mathsf Eh)(x):=\chi(s)h(\xhat)$ for a cutoff $\chi$, and extend by
zero. The $C^{1,1}$ regularity makes the nearest-point projection
Lipschitz, so
$\norm{\mathsf Eh}_{W^{1,\infty}(\R^3)}\le C_E\norm h_{W^{1,\infty}(\Ghat)}$.

All constants in this construction are uniform in $\by\in\U$ and
$t\in[0,1]$: the tubular radius $\delta_{\rm tub}$ of
\eqref{eq:tubular}, the Lipschitz constants of the nearest-point
projections, the velocity bounds, the flow estimates, and
$C_{\mathcal T},c_{\mathcal T},C_E$. Consequently the threshold $\delta_N$ of
\cref{prop:newton} is independent of the base point.
\qed

\section{A quantitative inverse perturbation lemma}\label{app:inverse}

\begin{lemma}\label{lem:neumann}
Let $X,Y$ be Banach spaces and let $A_0\in\cL_{\rm iso}(X,Y)$. Suppose
that
\[
\norm{A-A_0}_{\cL(X,Y)}<\norm{A_0^{-1}}_{\cL(Y,X)}^{-1} .
\]
Then $A\in\cL_{\rm iso}(X,Y)$ and
\[
\norm{A^{-1}}
\le
\frac{\norm{A_0^{-1}}}{1-\norm{A_0^{-1}}\norm{A-A_0}} .
\]
\end{lemma}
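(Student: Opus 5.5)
The plan is the classical Neumann series argument. Write $A = A_0 + (A-A_0) = A_0\big(I_X + A_0^{-1}(A-A_0)\big)$ and set $K := A_0^{-1}(A-A_0)\in\cL(X,X)$. The hypothesis gives
\[
q := \norm{K}_{\cL(X,X)} \le \norm{A_0^{-1}}\,\norm{A-A_0} < 1 .
\]
The whole proof then reduces to inverting $I_X+K$ on the Banach space $X$.

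First, since $\cL(X,X)$ is a Banach algebra (completeness of $X$ is where Banach enters), the series $\sum_{n\ge0}(-K)^n$ converges absolutely in operator norm, being dominated by $\sum_n q^n = (1-q)^{-1}$. Telescoping, $(I+K)\sum_{n=0}^{N}(-K)^n = I-(-K)^{N+1}\to I$, and the same holds on the other side. Hence $I+K$ is a bijection with bounded inverse and $\norm{(I+K)^{-1}}\le(1-q)^{-1}$.

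Second, $A = A_0(I+K)$ is a composition of two isomorphisms, so $A\in\cL_{\rm iso}(X,Y)$ with $A^{-1} = (I+K)^{-1}A_0^{-1}$. Consequently
\[
\norm{A^{-1}} \le \frac{\norm{A_0^{-1}}}{1-q} \le \frac{\norm{A_0^{-1}}}{1-\norm{A_0^{-1}}\norm{A-A_0}},
\]
where the second step uses monotonicity of $t\mapsto(1-t)^{-1}$ on $[0,1)$.

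There is no genuine obstacle. The only point to watch is to factor $A_0$ on the left, so that the perturbation operator $K$ acts on $X$ and not on $Y$, where it would be ill-typed. The two uses elsewhere in the paper also follow. Openness of $\cL_{\rm iso}$ is the statement itself. Continuity of inversion comes from $A^{-1}-A_0^{-1} = -A^{-1}(A-A_0)A_0^{-1}$ combined with the bound just proved.
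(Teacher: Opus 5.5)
Your proof is correct and is essentially the paper's own argument. The paper likewise factors $A=A_0[I+A_0^{-1}(A-A_0)]$ and applies the Neumann series; you have simply written out the series bound, the two-sided inverse and the composition of inverses that the paper leaves implicit.
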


\begin{proof}
Factor $A=A_0[I+A_0^{-1}(A-A_0)]$ and apply the Neumann series.
\end{proof}

\Cref{lem:neumann} is used in \cref{lem:complexinverse} to extend the
uniform real inverse bound to a common complex neighbourhood, and
analogously in \cref{lem:disc-uniform} at the discrete level.

\section{Cauchy estimates under an anisotropic budget}
\label{app:cauchy}

Let $u:\U\to Y$ admit, for every $(\bb,\eps)$-admissible $\brho$, a
holomorphic extension to the tube $\mathcal O_{\brho}$ satisfying
\begin{equation}\label{eq:tubebound}
\sup_{\bz\in\mathcal O_{\brho}}\norm{u(\bz)}_Y\le M
\end{equation}
with $M$ independent of $\brho$. This assumption is stronger than \cref{def:hol}, which requires a
uniform bound only on $\mathcal E_{\brho}$. The families considered in
\cref{thm:cauchy} satisfy the stronger bound \eqref{eq:tubebound}.

The case $\bnu=\mathbf 0$ follows directly from \eqref{eq:tubebound}, so
assume $\abs{\bnu}\ge1$. \Cref{def:admissible} requires $\rho_j>1$ for
every $j$, not only on $\supp\bnu$. We therefore reserve part of the
admissibility budget for the inactive coordinates: the
inactive coordinates receive
\begin{equation}\label{eq:inactive}
\rho_j-1:=\frac{\eps\,2^{-j-1}}{b_j},
\qquad j\notin\supp\bnu,
\qquad\text{so that}\quad
\sum_{j\notin\supp\bnu}(\rho_j-1)b_j\le\frac{\eps}{2} ,
\end{equation}
which is legitimate because $b_j>0$ for all $j$ by the convention
following \eqref{eq:bj}, and the remaining budget $\eps/2$ is spent on
$\supp\bnu$. The multivariate Cauchy formula on the polydisc
$\prod_{j\in\supp\bnu}D(y_j,\rho_j-1)$, contained in
$\mathcal O_{\brho}$, gives
\[
\norm{\partial_{\by}^{\bnu}u(\by)}_Y
\le
M\,\bnu!\prod_{j\in\supp\bnu}d_j^{-\nu_j} ,
\qquad\by\in\U,
\]
with $d_j=\rho_j-1$ the radius available in the $j$th coordinate.
Subject to the remaining budget
$\sum_{j\in\supp\bnu}(\rho_j-1)b_j\le\eps/2$, the choice
\[
\rho_j-1=\frac{\eps\,\nu_j}{2\,b_j\abs{\bnu}},
\qquad j\in\supp\bnu,
\]
is admissible and yields
\[
\norm{\partial_{\by}^{\bnu}u(\by)}_Y
\le
M\,\bnu!
\Big(\frac{2\abs{\bnu}}{\eps}\Big)^{\abs{\bnu}}
\prod_{j\in\supp\bnu}\Big(\frac{b_j}{\nu_j}\Big)^{\nu_j} .
\]
Now $\bnu!\le\prod_j\nu_j^{\nu_j}$, so the product
$\bnu!\prod_j\nu_j^{-\nu_j}$ is at most $1$, while
$\abs{\bnu}^{\abs{\bnu}}\le e^{\abs{\bnu}}\abs{\bnu}!$ by Stirling.
Hence
\[
\norm{\partial_{\by}^{\bnu}u(\by)}_Y
\le
M\,\abs{\bnu}!\prod_{j\in\supp\bnu}
\Big(\frac{2e\,b_j}{\eps}\Big)^{\nu_j} ,
\]
which is \eqref{eq:derivative-op}--\eqref{eq:derivative-current} with
$\beta_j=2e\,b_j/\eps$; the factor $2$ results from reserving half
the budget for the inactive coordinates in \eqref{eq:inactive}. The optimization produces the multinomial combinatorial growth
associated with the total order $\abs{\bnu}$, which
$\abs{\bnu}^{\abs{\bnu}}\le e^{\abs{\bnu}}\abs{\bnu}!$ converts into the
total-order factorial $\abs{\bnu}!$; it does not yield a bound
$C\,\bnu!\,\bbeta^{\bnu}$ with $\beta_j\lesssim b_j$ uniformly over
arbitrary finite supports, which is why $\abs{\bnu}!$ and not $\bnu!$
appears in \eqref{eq:derivative-op}--\eqref{eq:derivative-current}.
Consequently \eqref{eq:derivative-op} is a statement about derivatives
only, obtained from discs inside the tube; the coefficient summability
of \cref{prop:summability} rests instead on the uniform bound over the
Bernstein polyellipses and is not obtained from it by resummation, cf.\
\cref{rem:notaylor}.

\section*{Acknowledgment}

C.~Jerez-Hanckes acknowledges support from the Agencia Nacional de Investigaci\'on y Desarrollo (ANID), Chile, through Fondecyt Regular grant 1231112.

\section*{Statement of Usage of Artificial Intelligence}
Generative artificial intelligence tools were used to assist with language editing and the presentation of the manuscript.

\end{document}